\documentclass[english, 11pt]{amsart}

\usepackage{euscript,graphicx,epstopdf,amscd,amsgen,amsfonts,amssymb,latexsym,amsmath,amsthm,graphicx,mathrsfs,times,color,overpic,fourier,array}
\usepackage[all]{xy}
\usepackage{enumitem}
 \usepackage[colorlinks=true]{hyperref}
\usepackage{xcolor}

\usepackage[T1]{fontenc}

\newtheorem{theoremalph}{Theorem}
\newtheorem{corollaryalph}[theoremalph]{Corollary}

\newtheorem*{proposition*}{Proposition}
\newtheorem*{corollary*}{Corollary}
\newtheorem*{claim*}{Claim}
\newtheorem*{remark*}{Remark}
\newtheorem*{problem*}{Problem}
\newtheorem{theorem}{Theorem}[section]

\newtheorem{proposition}[theorem]{Proposition}

\newtheorem{lemma}[theorem]{Lemma}

\newtheorem{claim}[theorem]{Claim}

\theoremstyle{definition}
\newtheorem{notation}[theorem]{Notation}
\newtheorem{definition}[theorem]{Definition}
\newtheorem{remark}[theorem]{Remark}

\numberwithin{equation}{section}

   \def\cM{\mathcal{M}} \def\cS{\mathcal{S}}

\newcommand{\interior}{\operatorname{int}}
\newcommand{\supp}{\operatorname{supp}}
\newcommand{\diff}{\operatorname{Diff}}
\newcommand{\orb}{\operatorname{Orb}}

\newcommand{\per}{\operatorname{Per}}

\newcommand{\eqdef}{\stackrel{\scriptscriptstyle\mathrm{def}}{=}}

\DeclareMathOperator{\RT}{RT}
\DeclareMathOperator{\diag}{diag}

\begin{document}
	
\title{Lyapunov spectrum of homoclinic classes}

\author[L.J. D\'iaz]{Lorenzo J. D\'\i az}
\address{Departamento de Matem\'atica PUC-Rio, Marqu\^es de S\~ao Vicente 225, G\'avea, Rio de Janeiro 22451-900, Brazil}
\email{lodiaz@puc-rio.br}

\author[K.~Gelfert]{Katrin~Gelfert}
\address{Instituto de Matem\'atica Universidade Federal do Rio de Janeiro, Av. Athos da Silveira Ramos 149, Cidade Universit\'aria - Ilha do Fund\~ao, Rio de Janeiro 21945-909,  Brazil}\email{gelfert@im.ufrj.br}

\author[X. Wang]{Xiaodong Wang}
\address{School of Mathematical Sciences,  CMA-Shanghai, Shanghai Jiao Tong University, Shanghai, 200240, P.R. China}\email{xdwang1987@sjtu.edu.cn}

\author[J. Yang]{Jiagang Yang}
\address{Departamento de Geometria, Instituto de Matem\'atica e Estat\'istica, Universidade Federal Fluminense,
S\~ao Domingos, Niter\'oi - RJ, 24210-200,  Brazil}\email{jiagangyang@id.uff.br}

\thanks{LD, KG, and JY were supported [in part] by 
CAPES -- Finance Code 001, by 
CNPq-grants  310069/2020-3, % bolsa Lorenzo
305327/2022-4,  and % bolsa Katrin
312054/2023-8, % bolsa JY 
CNPq Projetos Universais 401737/2025-0,  % Universal Lorenzo 
404943/2023-3, % Universal Pacifico
and 401737/2025-0, % Univ Gugu
 FAPERJ - Carlos Chagas Filho Foundation for Research Support of the State of Rio de Janeiro, 
% E-26/211.313/2021, % Gugu
 E-26/211.313/2021, % JY
 E-26/204.046/2024, % Lorenzo CNE
  E-26/200.371/2023,  %Katrin CNE
  and  E-16/2014 INCT/FAPERJ.  % lorenzo inctmat
XW was partially supported by National Key R\&D Program of China (2021YFA1001900). 
JY was also supported by Capes  MATH-AMSUD, PRONEX and NSFC 12271538.
}

\keywords{ergodic and invariant measures, generic diffeomorphism, homoclinic class, Lyapunov exponent, Lyapunov spectrum}
\subjclass[2000]{%
%37D30, % partially hyperbolic systems and dominated splittings
37D25, %Nonuniformly hyperbolic systems (Lyapunov exponents, Pesin theory, etc.)
37D35, % Thermodynamic formalism, variational principles, equilibrium states
%37A35, %Entropy and other invariants, isomorphism, classification in ergodic theory
%28D20, % Entropy and other invariants
%28D99% Measure-theoretic ergodic theory
37C40%Smooth ergodic theory, invariant measures for smooth dynamical systems
}

\maketitle

\begin{abstract}
We study the Lyapunov spectrum of the ergodic measures of isolated homoclinic classes of $C^1$-generic diffeomorphisms. We show that this spectrum has nonempty interior and that any vector in its interior is the spectrum of some ergodic measure fully supported on the homoclinic class. We also discuss the averaged Lyapunov spectrum of  homoclinic classes (an extension of the Lyapunov graph).
\end{abstract}

\begin{flushright}{\tiny{{To professor Lan Wen, on the occasion of his 80th birthday}}}
\end{flushright}

\section{Introduction}

\subsection{Specification \emph{versus} $C^1$-generic approach}

In his seminal work~\cite{Sig:74}, Sigmund proved that systems with the specification property have dense {\em periodic measures} (that is, ergodic measures supported on periodic orbits) in the space of invariant measures endowed with the weak$^\ast$ topology. More broadly, specification, which allows orbit-gluing constructions and is closely related to hyperbolicity in the differentiable setting, yields a detailed description of the space of invariant measures: periodic measures are dense and ergodic measures form a residual subset (see Sigmund~\cite{Sig:70,Sig:74} and Bowen~\cite{Bow:08}). In the uniformly hyperbolic setting, through symbolic coding, one obtains a description of the rich ergodic structure of basic sets and their measures. 

Beyond the specification framework, this structure may break down. The density of periodic measures can fail, and invariant measures may arise that are not approximated by ergodic ones. Examples of these phenomena appear in~\cite{DiaHorRioSam:09} and its subsequent developments~\cite{DiaGelRam:14,DiaGel:16,Diaetal:19}, involving porcupine-like horseshoes and exposed pieces of dynamics, where distinct hyperbolic structures coexist without ``gluing'' coherently. A central question is thus to understand how much of the description for systems with the specification property survives beyond specification, particularly in the differentiable setting.

Beyond the weak$^\ast$ density of periodic measures, one can investigate finer ergodic properties and further quantifiers. Here, we focus on the Lyapunov spectrum of a measure that considers all its Lyapunov exponents simultaneously. We aim to understand the full range of such possible values that can be obtained, going beyond the hyperbolic setting. Here, we focus on the class of measures supported on a given homoclinic class. Indeed, for \emph{$C^1$-generic diffeomorphisms} (that is, in a $C^1$-residual subset of the space of $C^1$-diffeomorphisms),  homoclinic classes are the natural substitute of basic sets in Smale's spectral decomposition theorem~\cite{Sma:63} (see \cite{BonCro:04}). 

Note that, in general,  homoclinic classes may fail to be hyperbolic and exhibit more intricate features. They may contain periodic points with different {\em $\mathrm{s}$-indices} (that is, different dimensions of the stable bundle). Moreover, a homoclinic class may even strictly contain smaller homoclinic classes. Such structural complexity is reflected in corresponding pathologies in the space of invariant measures. 

A precursor for our results is \cite[Corollary 2]{Abdetal:07}, which states that for homoclinic classes of $C^1$-generic diffeomorphisms, the closure of the Lyapunov spectra of the periodic measures supported on the homoclinic class is a convex set. Here we provide a more detailed description of that closure. 

In this $C^1$-generic context, in \cite{AbdBonCro:11} the above mentioned ``orbit-gluing constructions'', typical of settings with specification, is replaced by a perturbative approach, as explained in \cite[Introduction]{AbdBonCro:11}. This involves tools such as Pugh's Closing Lemma \cite{Pug:67}, Hayashi's Connecting Lemma \cite{Hay:97,WenXia:00}, and their extensions in~\cite{BonCro:04}). Rather than relying on intrinsic dynamical concatenation properties, one exploits the flexibility of $C^1$-perturbations to create abundant periodic orbits and to organize them through homoclinic relations.

This philosophy can already be traced back to the work of Ma\~n\'e, who claimed,
without providing a complete proof, that the density of periodic measures among invariant measures for $C^1$-generic diffeomorphisms follows from his Ergodic Closing Lemma \cite{Man:82}. A complete proof was later given in~\cite[Item (i) of Theorem 3.8]{AbdBonCro:11}. In fact,~\cite{AbdBonCro:11} substantially extends Sigmund's results to the $C^1$-generic setting and establishes the foundations for subsequent developments of the theory. In this way, several features, which are reminiscent of specification, are recovered without orbit concatenation. In our study, as well as in related works, many of the tools developed to study topological and geometrical aspects, such as those in \cite{BonDiaPuj:03, Bonetal:03, Abdetal:07}, together with extensions of shadowing lemmas as in \cite{Gan:02}, play a key role in obtaining ergodic counterparts.
 
 However, even in the $C^1$-generic setting, where periodic measures are abundant, the periodic approximation perspective has intrinsic limitations when trying to go beyond weak$^\ast$ denseness. Indeed, note that weak$^\ast$ limits of periodic measures need not be ergodic. A finer analysis, therefore, requires understanding accumulation points of periodic measures, and in particular sequences whose limits remain ergodic.
To address this issue, our main tool is to use the method and criterion introduced in~\cite{Goretal:05} and later axiomatized in~\cite{BonDiaGor:10}, which ensures that the limit of a suitably controlled sequence of periodic measures is an aperiodic ergodic measure. The resulting measures are called {\em axiomatized GIKN measures}, 
as coined in \cite{KwiLac:}. These measures provide a convenient substitute for periodic ones, as they have low dynamical complexity: they belong to the class of rank-one measures~\cite{KwiLacTri:} and therefore have zero entropy, see Remark \ref{r.GIKNmeasures} for further details.
 
\subsection{Lyapunov spectrum of homoclinic classes}
 
Our main object of study is the {\em Lyapunov spectrum} of invariant and ergodic measures supported on homoclinic classes, which we now define precisely.

Throughout the paper, $M$ denotes a compact boundary-less Riemannian manifold of dimension $m\eqdef\dim(M)\ge 2$. We denote by $\diff^1(M)$ the space of $C^1$-diffeomorphisms of $M$, endowed with the $C^1$-topology.

We denote by $\mathcal{M}(M)$ the space of Borel probability measures on $M$, endowed with the weak$^\ast$ topology. For $f\in\diff^1(M)$, we write $\mathcal{M}_{\rm inv}(f)\subset\mathcal{M}(M)$ for the space of $f$-invariant probability measures and $\mathcal{M}_{\rm erg}(f)\subset\mathcal{M}_{\rm inv}(f)$ for the subset of ergodic ones. When $\Lambda$ is an $f$-invariant compact set, for $\dag \in \{\mathrm{inv}, \mathrm{erg}\}$, we denote by $\mathcal{M}_{\dagger}(\Lambda,f)$ the subset of $\mathcal{M}_{\dagger}(f)$ consisting of measures supported on $\Lambda$.

Given $f\in\diff^1(M)$ and $\mu\in\mathcal{M}_{\rm erg}(f)$, Oseledets' theorem~\cite{Ose:68} yields an $f$-invariant measurable set $\Gamma\subset M$ with $\mu(\Gamma)=1$, together with a measurable $Df$-invariant splitting over $\Gamma$
and numbers,
\begin{equation}
\label{e.oseledetselyapunov}
T_\Gamma M = \bigoplus_{i=1}^{\ell} E^i,
\qquad
\chi_1(\mu)>\cdots>\chi_\ell(\mu),
\end{equation}
such that for every $x\in\Gamma$ and every $0\neq v\in E^i_x$,
\begin{equation}
\label{e.expolyapunov}
\chi_i(\mu)
=
\lim_{n\to\pm\infty}
\frac{1}{n}\log \|Df^n(x)v\|.
\end{equation}
The {\em{multiplicity}} of $\chi_i(\mu)$ is the dimension of the bundle $E^i$.

The {\em{Lyapunov spectrum}} of $\mu\in\mathcal{M}_{\rm erg}(f)$ is the $m$-dimensional vector
\begin{equation}
\label{e.Lyapunovspectrummeasure}
	L(\mu) \eqdef 
	(\lambda_1(\mu),\dots, \lambda_m(\mu))\eqdef
	\big( \underbrace{\chi_1 (\mu),\dots,\chi_1(\mu)}_{\tiny{\mathrm{multiplicity\,} \chi_i(\mu)}},
	\dots, \underbrace{\chi_\ell (\mu),\dots,\chi_\ell(\mu)}_{\tiny{\mathrm{multiplicity\,} \chi_\ell (\mu)}} \big)
		\end{equation}
formed considering
the Lyapunov exponents of $\mu$ listed in nonincreasing order and 
counted  with their corresponding multiplicity (in particular, for each $j=1,\dots, m$ it holds
$\lambda_j(\mu)=\chi_i(\mu)$ for some $i$).
The Lyapunov spectrum of  an invariant measure is obtained by integrating 
the spectra of the ergodic measures of its ergodic decomposition.

Denote by $\per(f)$ the set of periodic points of $f$. For $p\in\per(f)$, we
denote by $\pi(p)$ its minimal  period. The ergodic \emph{periodic measure}
associated to $p$ is
\begin{equation}
\label{e.mup}
\mu_p \eqdef \frac{1}{\pi(p)}\sum_{i=0}^{\pi(p)-1}\delta_{f^i(p)},
\end{equation} 
where $\delta_x$ denotes the Dirac measure
at $x$. 
We denote by
$\mathcal{M}_{\rm per}(f)\subset\mathcal{M}_{\rm erg}(f)$ the set of
\emph{periodic measures}. A measure that is not periodic is called {\em{aperiodic}.}

The \emph{Lyapunov spectrum of the periodic orbit $\orb(p)$} is defined by,  with slight abuse of notation, 
\begin{equation}
\label{e.Lyapunovspectrumppoint}
L(p)\eqdef (\lambda_1(p), \dots,\lambda_m(p))\eqdef L(\mu_p).
\end{equation}

We define the {\em{Lyapunov spectrum}} 
   of an $f$-invariant closed set  $\Lambda$ by  
\begin{equation}
\label{e.Lyapunovspectrumset}
\mathcal L(\Lambda)
\eqdef \overline{\left\{L(\mu)\colon \mu \in \mathcal{M}_{\rm erg} (\Lambda, f)
\right\}}.
\end{equation}
Note that, considering the closure, we are potentially adding vectors that may correspond to the Lyapunov spectrum of nonergodic measures, as well as vectors that may not correspond to any spectrum.

We are interested in the case when $\Lambda=H$ is a \emph{nontrivial homoclinic class}.
Recall that a \emph{homoclinic class} is, by definition, the closure of the transverse intersections between the invariant manifolds of a hyperbolic periodic orbit, or equivalently, the closure of the periodic points homoclinically related to it. In particular, any homoclinic class is an $f$-invariant closed set and periodic points are dense in it  (see Section~\ref{ss.generic} for details). 
A homoclinic class is \emph{nontrivial} if it has at least two different (and hence uncountably many) orbits.
Since, by definition,  the set $H \cap \per(f)$ is dense in $H$ and, for $C^1$-generic diffeomorphisms, every ergodic measure is a weak$^\ast$-limit of periodic measures, it is natural to consider the {\em{periodic  Lyapunov spectrum}} of a homoclinic class defined by
\begin{equation}
\label{e.periodicLyapunovspectrumclass}
\mathcal{L}_{\mathrm{per}}(H) \eqdef  \overline{\left\{ L(\mu) \colon 
\mu \in \mathcal{M}_{\mathrm{per}}(H, f) \right\}}.
\end{equation}

\subsection{Main results}

As observed above, the set $\mathcal{L}(H)$ may
contain vectors that do not correspond to the Lyapunov spectrum of any ergodic
measure. We prove that this is not the case for vectors in the interior of this set.

In what follows, we denote by $\supp(\mu)$ the support of a measure $\mu$ and by
$\interior (S)$ the interior of a set $S$. 

\begin{theoremalph}\label{Thm:Lyapunov-spectrum}
	There exists a residual subset $\mathcal{R}$ of $\,\diff^1(M)$ such that for every $f\in\mathcal{R}$,  
	every nontrivial homoclinic class $H$ of $f$, and every vector
	 $L\in\interior (\mathcal L_{\rm per}(H))$, there exists $\mu\in\mathcal{M}_{\rm erg}(H,f)$ such that 
	\[
	L(\mu)=L  \qquad \text{and} \qquad  \supp(\mu)=H.
	\]
\end{theoremalph}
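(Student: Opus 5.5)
The plan is to realize $\mu$ as an axiomatized GIKN limit of periodic measures. We use the criterion of \cite{Goretal:05,BonDiaGor:10}: if $(\mathcal O_n)$ is a sequence of periodic orbits with periods $\pi_n\to\infty$, and $\mathcal O_{n+1}$ $\gamma_n$-shadows $\mathcal O_n$ during a fraction $\ge 1-\kappa_n$ of its period, with $\sum\gamma_n<\infty$ and $\prod(1-\kappa_n)>0$, then $\mu_{\mathcal O_n}$ converges to an ergodic measure $\mu$. Moreover, if each $\mathcal O_{n+1}$ is $\varepsilon_n$-dense in $H_n$, where $\varepsilon_n\to0$ and the sets $H_n$ increase to a dense subset of $H$, then $\supp(\mu)=H$. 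For the spectrum we need a stronger conclusion than weak$^\ast$ convergence, because Lyapunov exponents are not continuous. The plan is therefore to control all partial sums $\lambda_1+\dots+\lambda_k$ (equivalently, the exponents of the exterior powers $\Lambda^kDf$). These are subadditive, hence upper semicontinuous, and they have to be pinned down along the whole orbit rather than only in the limit.

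\textbf{Step 1: generic ingredients.} Choose $\mathcal R$ to be the intersection of the residual sets from \cite{BonCro:04,Abdetal:07,AbdBonCro:11}. On it the homoclinic class $H$ coincides with the chain recurrence class, periodic points of the same index in $H$ are homoclinically related, and by \cite[Corollary 2]{Abdetal:07} the set $\mathcal L_{\rm per}(H)$ is convex. We also need a $C^1$-generic perturbative property, obtained by a Baire argument from Franks' lemma combined with the connecting lemma. Given finitely many related periodic points $p_1,\dots,p_j$ of a common index in $H$, weights $t_i$, and $\varepsilon>0$, there is a periodic point $q\in H$ that is $\varepsilon$-dense in $H$ and spends proportion about $t_i$ of its period shadowing $\orb(p_i)$. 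Its exponents satisfy
\[
L(q)\approx \sum t_i L(p_i)
\]
with $\varepsilon$-control on the finite-time partial sums $\frac1N\log\|\Lambda^k Df^N\|$ along most of the orbit, for suitable $N$. Realizing spectra in the interior is where the freedom comes from. An interior $L$ lies in the interior of the convex hull of finitely many periodic spectra, and it can be perturbed in every direction. Since $L$ is interior, the periodic points used can be taken to have an index compatible with $L$ (the number of negative entries of $L$, or a related index when a zero entry is allowed). They can also be taken with dominated splittings separating all distinct entries, because interior points have room to split exponents.

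\textbf{Step 2: inductive construction.} Set $L_n\to L$ with $L_n\in\interior\mathcal L_{\rm per}(H)$ and choose the orbits $\mathcal O_n$ inductively. The orbit $\mathcal O_{n+1}$ is built from $\mathcal O_n$ (weight $1-\kappa_n$) together with a few auxiliary periodic orbits (weight $\kappa_n$). The auxiliary orbits are chosen so that $L(\mathcal O_{n+1})=L_{n+1}$ exactly, or within $2^{-n}$, which is possible because $L$ is interior and convexity gives room to correct. They are also chosen so that $\mathcal O_{n+1}$ is $1/n$-dense in $H$, which yields full support. Summability of the shadowing errors gives ergodicity of the limit $\mu$.

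\textbf{Step 3: the main obstacle, identifying $L(\mu)=L$.} Upper semicontinuity gives $\sum_{i\le k}\lambda_i(\mu)\ge\limsup\sum_{i\le k}\lambda_i(\mathcal O_n)$ only in the wrong direction, so a matching bound is needed. The plan is to use uniform-in-time control. The shadowing is done in charts where the derivative along $\mathcal O_{n+1}$ is $\gamma_n$-close to that along $\mathcal O_n$. Hence for $\mu$-typical points, the finite-time averages $\frac1{N_n}\log\|\Lambda^kDf^{N_n}\|$ along blocks of length $N_n$ stay within $\sum_{m\ge n}\gamma_m$ of $\sum_{i\le k}\lambda_i(\mathcal O_n)$, on a set of $\mu$-measure at least $\prod_{m\ge n}(1-\kappa_m)\to1$. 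Then the subadditive ergodic theorem gives $\sum_{i\le k}\lambda_i(\mu)=\lim_n\sum_{i\le k}\lambda_i(\mathcal O_n)=\sum_{i\le k}L^{(i)}$ for every $k$, and therefore $L(\mu)=L$. The delicate point is ensuring that the finite-block norms along each $\mathcal O_n$ already realize its exponents, uniformly along the orbit, so that there is no cancellation between blocks. I would handle this using the dominated splittings from Step 1 (choose $L$ interior so exponents are separated), which makes norms of exterior powers almost multiplicative along the shadowing segments.
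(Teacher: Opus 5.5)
\textbf{Gap: the case where some coordinate of $L$ vanishes.} When no coordinate of $L$ is zero, your scheme is essentially the paper's: all periodic spectra near $L$ have the same sign pattern, so one index suffices. It breaks down when $\lambda_i=0$ for some $i$. Such $L$ do occur in $\interior\mathcal L_{\rm per}(H)$, for instance by convexity whenever $H$ contains saddles of consecutive indices.

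Your Step~1 mixing property only combines homoclinically related saddles of a common index. Hence every $\mathcal O_n$ and every auxiliary orbit has $\lambda_i$ of one fixed sign, say positive. Since $\mathcal O_{n+1}$ spends the fraction $1-\kappa_n$ of its period shadowing $\mathcal O_n$, you get $\lambda_i(\mathcal O_{n+1})\gtrsim(1-\kappa_n)\lambda_i(\mathcal O_n)$. This holds up to approximation errors that you must keep small, to control the other coordinates, and whose sign you do not control. Because $\prod(1-\kappa_n)>0$, $\lambda_i(\mathcal O_n)$ stays bounded below by roughly $\prod_{n\ge n_0}(1-\kappa_n)\,\lambda_i(\mathcal O_{n_0})>0$, so it never converges to $0$. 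Thus ``convexity gives room to correct'' fails here. The correcting orbit must lie on the other side of $\{\lambda_i=0\}$, i.e.\ have the consecutive index, and such a saddle is never homoclinically related to $\mathcal O_n$.

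\textbf{What is missing.} You need a mechanism producing periodic orbits in $H$ that shadow a saddle of index $s$ for a prescribed proportion of the period and a saddle of index $s+1$ for the rest, with spectrum close to the corresponding convex combination. In the paper this comes from heterodimensional cycles, in four pieces:
\begin{itemize}
\item Lemma~\ref{l.hayashi} (connecting lemma) creates the cycles densely.
\item \cite[Theorem 3.1]{Bonetal:03} (Proposition~\ref{Prop:new-periodic-point}) gives the saddles $\bar p_{g,\mathbf v}$, with prescribed visit proportions, products of $M_p$, $M_q$ and fixed transitions, and a compatible dominated splitting.
\item The residual set $\mathcal R_1$ makes these saddles exist for the generic $f$ itself.
\item Remark~\ref{r.thesaddlebarp}, together with $H$ being a chain recurrence class, keeps them in $H$.
\end{itemize}
This is what lets the paper choose at each step a single $q_n$ whose spectrum sits coordinatewise at distance $2\delta$ from $L$, on the side opposite to $L(p_n)$ (Remark~\ref{rem:choiceofqn}), even when $p_n$ and $q_n$ have different indices.

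\textbf{Step~3.} Your Step~3 is essentially the paper's argument, but the bound ``within $\sum_{m\ge n}\gamma_m$'' is not justified: per-step closeness of derivatives does not control $\frac1N\log\|\Lambda^kDf^N\|$ uniformly in $N$. The fix is the order of quantifiers, as in Item~\ref{item:exterior-product} of Theorem~\ref{Thm:construction-sequences}:
\begin{enumerate}
\item Once $\mathcal O_n$ is built, fix $N_n$ so that the time-$N_n$ quantity is close to the partial sum at each of the finitely many points of $\mathcal O_n$.
\item Then choose $\gamma_n$ by continuity at that fixed time.
\item Require all later orbits to shadow within $\gamma_n$.
\end{enumerate}
Kingman's theorem, applied along $N_n$ at a typical point of the positive-measure set of points that stay $2\gamma_n$-close to $\mathcal O_n$ for all $n$, then gives $L(\mu)=L$. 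No block concatenation or domination is needed at this step; domination is only needed to compute $L(\mathcal O_{n+1})$.
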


Recall that a homoclinic class $H$ is said to be {\em{isolated}} if there is a neighborhood $U$ of it (called {\em{isolating neighborhood of $H$}})  such that
\begin{equation}
\label{e.isolatingblock}
H = \bigcap_{i\in \mathbb{Z}} f^i (U).
\end{equation}
Under the hypotheses of isolation, it is possible to give a more accurate description of the two spectra in \eqref{e.Lyapunovspectrumset} and \eqref{e.periodicLyapunovspectrumclass}. 
Recall that  for $C^1$-generic diffeomorphisms, the Lyapunov spectrum of any ergodic measure $\mu$ is the limit of the Lyapunov spectrum of a sequence of periodic measures that weak$^\ast$ converges to $\mu$,
see \cite[Theorems 3.5 and 3.8]{AbdBonCro:11}.
The isolation property implies that the approximating periodic measures are also supported on the class.  Thus
 for every isolated  homoclinic class $H$ of a $C^1$-generic diffeomorphisms it holds
\begin{equation}
\label{e.samedef}
\mathcal{L}(H) = \mathcal{L}_{\mathrm{per}}(H).
\end{equation}
For details, see Remark~\ref{r.equality}.

Assuming that the homoclinic class is isolated, we obtain a strengthened version of the previous result, reminiscent of \cite[Theorem~3.5]{AbdBonCro:11}:

\begin{theoremalph}\label{Thm:isolated-ergodic-denssness}
	There exists a residual subset $\mathcal{R}$ of $\diff^1(M)$ such that every
$f\in\mathcal{R}$,
	every nontrivial and isolated homoclinic class $H$ of $f$,  and  every vector $L\in\interior (\mathcal L(H))$,
	the set 
	\[
	\left\{\mu\in\mathcal{M}_{\rm erg}(H,f)\colon L(\mu)=L \quad \text{and} \quad \supp(\mu)=H\right\}
	\] 
    is dense 
	\[\left\{\mu\in\mathcal{M}_{\rm inv}(H,f)\colon L(\mu)=L\right\}.\]
\end{theoremalph}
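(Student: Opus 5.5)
The plan is to build, for a given invariant measure $\nu$ supported on $H$ with $L(\nu)=L\in\interior(\mathcal L(H))$ and a weak$^\ast$ neighborhood $\mathcal V$ of $\nu$, an axiomatized GIKN sequence of periodic measures $\mu_{p_n}$ supported on $H$. The weak$^\ast$ limit $\mu$ of this sequence should be ergodic, lie in $\mathcal V$, have full support $H$, and satisfy $L(\mu)=L$.

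\textbf{Step 1: approximate $\nu$ by a single periodic measure.} I write the ergodic decomposition of $\nu$ and use $C^1$-genericity together with \cite[Theorems 3.5 and 3.8]{AbdBonCro:11}. Since $H$ is isolated (see \eqref{e.samedef} and Remark~\ref{r.equality}), this approximates $\nu$ by a finite convex combination of periodic measures on $H$ whose spectra are close to those of the ergodic components. For a $C^1$-generic $f$, periodic points of $H$ with the same index are homoclinically related. Following \cite{Abdetal:07}, I then glue these orbits by shadowing in the spirit of \cite{Gan:02}, spending long times near each orbit in proportion to its weight. This yields a single periodic point $p_1\in H$ with $\mu_{p_1}\in\mathcal V$ and $L(p_1)$ close to $L$.

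\textbf{Step 2: steer the spectrum.} Because $L$ is interior, I can fix finitely many periodic points $q_1,\dots,q_k\in H$ whose spectra span a simplex containing $L$ in its interior. Adding a small proportion of time spent near suitable $q_j$ corrects $L(p_1)$ by an error of size $O(\text{proportion})$. Starting from $p_n$, I construct $p_{n+1}$ by shadowing: it follows $\orb(p_n)$ for most of its period, and it spends a tiny fraction $\kappa_n$ of its time in two places, near corrective orbits chosen so that $|L(p_{n+1})-L|<\varepsilon_{n+1}$, and near a dense set of points of $H$, so that the orbit becomes $1/n$-dense in $H$.

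\textbf{Step 3: verify the GIKN axioms and pass to the limit.} I choose $\kappa_n$ summable, so the GIKN criterion of \cite{Goretal:05,BonDiaGor:10} applies. That criterion requires the orbit of $p_{n+1}$ to shadow $\orb(p_n)$ closely on a proportion at least $1-\kappa_n$ of its period. It then yields that $\mu_{p_n}\to\mu$ ergodic. The support of $\mu$ equals $H$ because the density of the orbits is retained with positive, summable-controlled weight. Staying inside $\mathcal V$ follows from the smallness of the perturbations.

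The main obstacle is the equality $L(\mu)=L$, because Lyapunov exponents are only upper semicontinuous in general. I would handle this through the sums of the top $i$ exponents, $\Lambda_i=\lambda_1+\dots+\lambda_i$, which are obtained from $\log\|\wedge^i Df\|$. A natural tool is a dominated splitting along the shadowed pieces: the GIKN construction gives uniform closeness of the orbit segments, which lets one control the finite-time averages of $\log\|\wedge^i Df^{N}\|$ uniformly in $n$. The argument would then pass $\limsup$ and $\liminf$ bounds on each $\Lambda_i$ to the limit, and I would expect both bounds to converge to $\Lambda_i(L)$.

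If no domination is available, I would first try to refine the construction so that the exponents of the $p_n$ are realized by uniform finite-time norms along the common shadowed segments. This would make $\frac1N\log\|\wedge^iDf^N\|$ nearly constant along most of each orbit. Ergodicity of $\mu$, applied through the subadditive ergodic theorem, then gives $\Lambda_i(\mu)=\lim\Lambda_i(p_n)$.
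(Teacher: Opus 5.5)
There is a genuine gap in Step~2, and Step~2 is where the real difficulty of the theorem lies. You obtain $p_{n+1}$ ``by shadowing'' a pseudo-orbit that follows $\orb(p_n)$ and visits corrective orbits. You then take for granted two things: that such a $p_{n+1}\in H$ exists for the given $f$, and that $L(p_{n+1})$ is approximately the coordinatewise weighted average of the spectra of the pieces. Neither is available in a nonhyperbolic homoclinic class.

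First, you cannot avoid gluing saddles of different $\mathrm{s}$-indices. Suppose $H$ contains saddles of indices $k$ and $k+1$. Since $\mathcal L(H)$ is convex with nonempty interior, some $L\in\interior(\mathcal L(H))$ has $\lambda_{m-k}=0$. If all pieces have $\lambda_{m-k}$ of one sign, each step shrinks $|\lambda_{m-k}(p_n)|$ at most by the factor $1-\kappa_n$ (up to small errors). The GIKN criterion forces $\prod_n(1-\kappa_n)>0$, so $\lambda_{m-k}(p_n)$ cannot tend to $0$. Gan's shadowing lemma concerns quasi-hyperbolic strings for a dominated splitting of fixed index, so it cannot glue saddles of different indices.

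Second, even for equal indices, shadowing gives no control on how the transition segments act on the Oseledets directions inside $E^{\mathrm s}$ and $E^{\mathrm u}$. Without a dominated splitting into one-dimensional bundles along $\orb(p_{n+1})$, compatible with those of the pieces, the exponents need not average coordinatewise. A transition exchanging a weak and a strong stable direction already breaks this.

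Your Step~1 has the same defect, since the ergodic components of $\nu$ may have different indices. There, however, it is avoidable. Lemma~\ref{lProp:approximation-invariant-measure} directly gives saddles $\bar p_n\in H$ with simple spectrum, $\mu_{\bar p_n}\to\nu$ and $L(\bar p_n)\to L$. Item~\ref{generic:new-periodic-orbit} of Theorem~\ref{Thm:generic-properties} then turns one of them into a dense $p_{n_0}$ with the same properties.

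The missing ingredient is the generic gluing mechanism that the paper builds into the residual set $\mathcal R$. For homoclinically related saddles of equal index it uses transitions (Lemma~\ref{l.periodiclinearsystem}). For consecutive indices it uses heterodimensional cycles, created densely by the connecting lemma (Lemma~\ref{l.hayashi}), combined with Proposition~\ref{Prop:new-periodic-point}. A Baire argument (Remark~\ref{r.keyresidual}) then gives, for the fixed $f\in\mathcal R$ and all $\alpha,\beta$, a saddle $p_{n+1}\in H$ with:
\begin{itemize}
\item $\alpha\pi(p_n)$ iterates near $\orb(p_n)$ and $\beta\pi(q_n)$ iterates near $\orb(q_n)$;
\item bounded transition time;
\item a dominated splitting into one-dimensional bundles compatible with those of $p_n$ and $q_n$.
\end{itemize}
This is why simple spectrum is kept along the induction and why the estimates \eqref{Equ:good-approximation-all} hold.

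Proposition~\ref{Prop:new-periodic-point} glues two saddles at a time, so the paper uses a single auxiliary orbit $q_n$ per step. This $q_n$ is $\delta/4^{n+1}$-dense, and each of its coordinates lies about $2\delta$ from $\lambda_i$, on the side opposite to $\lambda_i(p_n)$ (Remark~\ref{rem:choiceofqn}). One weight $t$ then halves the error in all coordinates at once and also yields density. Your fixed simplex of several $q_j$ would instead need multi-orbit gluing across indices.

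By contrast, what you single out as the main obstacle needs no domination. Choose $k_n$ first, so that $L_i^{(k_n)}-L_{i-1}^{(k_n)}$ is $\delta 2^{-n}$-close to $\lambda_i(p_n)$ on $\orb(p_n)$. Then choose $\gamma_n$ by continuity, so that this persists on $B_{2\gamma_n}(\orb(p_n))$. Finally, demand $(\gamma_n,1-2^{-n})$-good approximations with $\gamma_{n+1}<\gamma_n/2$. The nested set $Y$ of \eqref{e.accumulatedprojection} then has $\mu(Y)>0$ and lies in every $B_{2\gamma_n}(\orb(p_n))$. Applying Oseledets at $\mu$-a.e.\ point of $Y$ along $k_n\to\infty$ gives $L(\mu)=L$. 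Your last paragraph points this way, but what makes it work is the order of choices ($k_n$ before $\gamma_n$) together with the positive-measure set $Y$.
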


\begin{remark}
By \eqref{e.samedef} together with Theorem~\ref{Thm:periodic-interior-spectrum} stated below, it follows that any isolated homoclinic class of a diffeomorphism in Theorems~\ref{Thm:Lyapunov-spectrum} and \ref{Thm:isolated-ergodic-denssness} 
satisfies $\interior (\mathcal{L}_{\rm per}(H))=\interior (\mathcal{L}(H))\neq\emptyset$.
\end{remark}

\begin{remark}[Rank-one measures]\label{r.GIKNmeasures}
Our construction shows that the measures realizing the Lyapunov spectra in Theorems \ref{Thm:Lyapunov-spectrum} and \ref{Thm:isolated-ergodic-denssness} can be chosen axiomatized GIKN ones and hence rank-one measures (see \cite{KwiLacTri:}). Let us observe that,
\[
	\{\text{axiomatized GIKN}\}
	\subset \{\text{rank-one}\}
	\subset \{\text{zero metric entropy}\}.
\]
The first inclusion is proved in \cite{KwiLacTri:}. The fact that axiomatized GIKN measures have zero entropy has previously been proved in \cite{KwiLac:}.
\end{remark}

A particular case of Theorem~\ref{Thm:isolated-ergodic-denssness} occurs when the homoclinic class coincides with the whole manifold. To discuss this situation, recall that a set is \emph{transitive} if it is the closure of the orbit of one of its points. A diffeomorphism $f\in \diff^1(M)$ is \emph{transitive} if $M$ is a transitive set for $f$, and it is \emph{$C^1$-robustly transitive} if it has a neighborhood in $\diff^1(M)$ consisting entirely of transitive diffeomorphisms. We denote by $\RT^1(M)$ the (open) subset of $\diff^1(M)$ formed by $C^1$-robustly transitive diffeomorphisms.

The next result follows from Theorem~\ref{Thm:isolated-ergodic-denssness} together with the following observation. Under the assumption of robust transitivity and $C^1$-generically, the manifold $M$ coincides with a homoclinic class (which is therefore isolated). 
This fact is by now standard and follows from the Connecting Lemma~\cite{Hay:97,WenXia:00}. It is stated explicitly in the three-dimensional case in \cite[Lemma~3.3]{DiaPujUre:99}, and the higher-dimensional case follows by the same argument. Indeed, the residual subset in the corollary can be taken $\mathcal{T}=\mathcal{R}\cap\RT^1(M)$.

\begin{corollaryalph}\label{c.corollary}
	There exists a residual subset $\mathcal{T}$ of $\,\RT^1(M)$ such that 
	for every $f\in\mathcal{T}$ and every $L\in\interior (\mathcal L(M))$, 
	the set
	\[
	\left\{\mu\in\mathcal{M}_{\rm erg}(f)\colon L(\mu)=L \quad \text{and} \quad \supp(\mu)=M\right\}
	\] 
	is dense in
	\[
	\left\{\mu\in\mathcal{M}_{\rm inv}(f)\colon L(\mu)=L\right\}.
	\]
\end{corollaryalph}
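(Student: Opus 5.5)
The corollary follows from Theorem~\ref{Thm:isolated-ergodic-denssness} once we know that, for $C^1$-generic $f\in\RT^1(M)$, the whole manifold $M$ is a nontrivial isolated homoclinic class. The plan is to set $\mathcal{T}\eqdef\mathcal{R}\cap\mathcal{G}\cap\RT^1(M)$. Here $\mathcal{R}$ is the residual set of Theorem~\ref{Thm:isolated-ergodic-denssness}, and $\mathcal{G}$ is a residual subset of $\diff^1(M)$ of Kupka--Smale diffeomorphisms satisfying the standard generic properties coming from the Connecting Lemma~\cite{Hay:97,WenXia:00,BonCro:04}. Since $\RT^1(M)$ is open, $\mathcal{T}$ is residual in $\RT^1(M)$.

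First, we check that $M=H(p,f)$ for some hyperbolic periodic point $p$, for every $f\in\mathcal{T}$. By \cite{BonCro:04}, for $C^1$-generic $f$ every homoclinic class is a chain recurrence class, and conversely any chain recurrence class containing a periodic point is its homoclinic class. A transitive diffeomorphism has $M$ as its unique chain recurrence class. Periodic points exist and are dense for generic $f$ by the Closing Lemma~\cite{Pug:67}, and they are hyperbolic by Kupka--Smale. Taking any periodic point $p$, we get $H(p,f)=M$. This is the argument of \cite[Lemma~3.3]{DiaPujUre:99}, which works in any dimension.

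Next, the class is isolated trivially: take the isolating neighborhood $U=M$, so $\bigcap_{i\in\mathbb{Z}} f^i(M)=M=H$. It is also nontrivial, since $\dim M\ge 2$ and a transitive diffeomorphism of $M$ cannot have $M$ consisting of finitely many orbits.

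Finally, for $\Lambda=M$ we have $\mathcal{M}_{\rm erg}(M,f)=\mathcal{M}_{\rm erg}(f)$ and $\mathcal{M}_{\rm inv}(M,f)=\mathcal{M}_{\rm inv}(f)$, and the condition $\supp(\mu)=H$ reads $\supp(\mu)=M$. Applying Theorem~\ref{Thm:isolated-ergodic-denssness} to $H=M$ and any $L\in\interior(\mathcal{L}(M))$ then gives exactly the claimed density.

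The only nonformal step is identifying $M$ with a homoclinic class, and it rests entirely on the generic results quoted above, namely Bonatti--Crovisier's characterization of homoclinic classes as chain recurrence classes. Once that is granted, the remaining steps are bookkeeping.
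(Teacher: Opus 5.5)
Your proposal is correct and follows the paper's route: for generic $f\in\RT^1(M)$ you identify $M$ with a nontrivial homoclinic class, which is trivially isolated with $U=M$, and then apply Theorem~\ref{Thm:isolated-ergodic-denssness} with $H=M$. The only difference is how you justify $M=H(p,f)$. You use Bonatti--Crovisier's identification of chain recurrence classes with homoclinic classes, which needs only transitivity of $f$. The paper instead cites the connecting-lemma argument of \cite[Lemma~3.3]{DiaPujUre:99}, stated under robust transitivity.
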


A periodic point $p$ whose Lyapunov spectrum $L(p)=\left(\lambda_1(p),  \dots,\lambda_m(p)\right)$ is such that $\lambda_i(p)\neq \lambda_j(p)$ for every $i\neq j$, is said to have {\em{simple spectrum.}} 
The residual set $\mathcal{R}$ in the previous theorems can be chosen such that every nontrivial  homoclinic class $H$ of a diffeomorphism 
 $f\in \mathcal{R}$ contains a dense subset of periodic points with simple spectrum, see \cite[Proposition 2.3]{Abdetal:07}.
The following theorem states further properties for the spectrum of these periodic points:

\begin{theoremalph}\label{Thm:periodic-interior-spectrum}
There exists a residual\,\footnote{The residual set
$\mathcal{R}'$ is contained in the set $\mathcal{R}$ in previous theorems.}
 subset $\mathcal{R}'\subset \diff^1(M)$ such that, for every $f\in\mathcal{R}'$ and every nontrivial homoclinic class $H$ of $f$, the following holds: 
\[
L(q)\in \interior (\mathcal{L}(H))
\qquad \text{for every periodic point } q\in H\text{ with simple spectrum}.
\]
In particular, $\interior\bigl(\mathcal{L}(H)\bigr)\neq\emptyset$.
\end{theoremalph}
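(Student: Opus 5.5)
Fix $f$ $C^1$-generic, a nontrivial homoclinic class $H$, and a periodic point $q\in H$ with simple spectrum $L(q)=(\lambda_1(q)>\dots>\lambda_m(q))$. The plan is to show that every vector close to $L(q)$ in $\mathbb{R}^m$ is a limit of spectra of periodic points of $H$. Since $\mathcal{L}_{\rm per}(H)\subset\mathcal{L}(H)$, this gives $L(q)\in\interior(\mathcal{L}(H))$. By the convexity result \cite[Corollary 2]{Abdetal:07}, the set $\mathcal{L}_{\rm per}(H)$ is convex. So it suffices to produce $m+1$ affinely independent vectors in $\mathcal{L}_{\rm per}(H)$, all of them close to $L(q)$, whose convex hull contains $L(q)$ in its interior. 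Concretely, for small $\delta>0$ I would aim to obtain, for each $i$ and each sign $\pm$, vectors in $\mathcal{L}_{\rm per}(H)$ arbitrarily close to $L(q)\pm\delta e_i$, where $e_i$ is the $i$-th coordinate vector. The convex hull of these $2m$ points contains a neighborhood of $L(q)$.

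\textbf{Step 1 (perturbation).} Simple spectrum makes $Df^{\pi(q)}(q)$ diagonalizable over $\mathbb{R}$ with eigenvalues of distinct moduli. I would apply a Franks-type lemma along the orbit of $q$ to get $g$ $C^1$-close to $f$ for which the continuation $q_g$ has spectrum $L(q)\pm\delta e_i$. The perturbation multiplies the $i$-th eigenvalue by $e^{\pm\delta\pi(q)}$ and keeps the others fixed. For $\delta$ small, simplicity and hyperbolicity are preserved, so the index and the homoclinic relations persist and $q_g$ stays in the continuation of $H$. A cleaner route avoids changing the period: pick a periodic point $p$ of $H$, homoclinically related to $q$, which spends a large fraction of its period near $\orb(q)$. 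Such $p$ comes from a horseshoe built from a transverse homoclinic point of $q$, so its spectrum is close to $L(q)$. Then perturb $p$ in a small neighborhood disjoint from $\orb(q)$ that still contains the $\pi(q)$ consecutive iterates of $p$ used for the eigenvalue change.

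\textbf{Step 2 (genericity).} I would transfer the perturbative statement to $f$ by a standard semicontinuity/Baire argument. Consider the property: ``for every rational $\delta$ and every periodic point $q$ of period $\le n$ with simple spectrum, the homoclinic class of $q$ contains periodic points with spectrum within $1/k$ of $L(q)\pm\delta e_i$.'' This property is open, because hyperbolic periodic points and homoclinic relations persist. By Step 1 it is also dense near diffeomorphisms that are continuity points of $g\mapsto H(p_g)$. Intersecting over $n$, $k$ and $\delta$, and with the residual sets of \cite{BonCro:04,Abdetal:07}, yields $\mathcal{R}'$.

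\textbf{Main obstacle.} The main difficulty is keeping the perturbed periodic point inside the class $H$, that is, homoclinically related to $q$ after the perturbation. A Franks perturbation along the orbit of $q$ itself would move $q$'s own invariant manifolds. Using an auxiliary point $p$ with transverse intersections with $\orb(q)$ outside the support of the perturbation handles this, since the transverse intersections are preserved under small perturbations. A second delicate point is that the eigenvalue change must be made only along a fraction of $\pi(p)$. The spectrum shift then scales with that fraction, so $\delta$ has to be chosen after fixing $p$, or else one uses the fact that $p$ stays near $q$ for most of its period.
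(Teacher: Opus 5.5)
Your architecture matches the paper's: an auxiliary saddle homoclinically related to $q$ that shadows $\orb(q)$ for most of its period, a Franks-type perturbation along that auxiliary orbit away from $\orb(q)$, a Baire argument, and convexity of the spectrum. However, Step~2 fails as formulated. Take a fixed $\delta>1/k$ and the property ``$H(q)$ contains periodic points with spectrum within $1/k$ of $L(q)\pm\delta e_i$''. This property is not dense. Near a diffeomorphism having a robustly isolated linear horseshoe class on which $Df$ is a constant diagonal matrix, every nearby $g$ has all periodic spectra of the class in a tiny ball, so the property fails on an open set. For $\delta<1/k$ it is satisfied by $q$ itself and carries no information. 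Your Step~1 perturbation has $C^1$-size of order $\delta$, so it can never show that $f$ lies in the closure of the good set, which is exactly what the Baire trick needs. The open property must be magnitude-free. The paper uses $\mathcal{H}_{n,I}$ for each $I\subset\{1,\dots,m\}$. Its condition is that the saddle whose orbit lies in a set $U_n$ isolating $\orb(q)$ (hence a point of $\orb(q)$) is homoclinically related to a saddle whose exponents are strictly larger for $i\in I$ and strictly smaller for $i\notin I$, or vice versa. Strict inequalities are open and can be produced by arbitrarily small perturbations. The resulting $2^m$ vectors, one in each open orthant around $L(q)$, have $L(q)$ in the interior of their convex hull, and convexity of $\mathcal{L}(H)$ concludes. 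Your $\pm e_i$ scheme could be repaired with cone conditions, but only in such a magnitude-free form.

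Once the perturbation size $\varepsilon$ is arbitrarily small, the exponent shift is only of order $\varepsilon$. The auxiliary point $p$ must therefore be chosen \emph{after} $\varepsilon$, with $\|L(p)-L(q)\|$ smaller than that shift; this is the opposite of your remark that $\delta$ is chosen after fixing $p$. With this order, your justification for the persistence of the homoclinic relation breaks down. The $C^1$-neighborhood on which the transverse intersections of that particular $p$ persist depends on $p$. The intersection points lying outside the support of the perturbation does not help either, since $W^{\mathrm{s}}(\orb(p_g),g)$ and $W^{\mathrm{u}}(\orb(p_g),g)$ are generated by local manifolds that pass through the perturbed region. The missing tool is Gourmelon's Franks lemma (Lemma~\ref{Lem:Gourmelon-Franks}). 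It deforms the derivative along a path of hyperbolic matrices while keeping compact pieces $K^{\mathrm{s}},K^{\mathrm{u}}$ of the local invariant manifolds that contain the heteroclinic points, so the relation survives independently of $p$. Alternatively, you could take all auxiliary points inside one fixed horseshoe containing $\orb(q)$, whose continuation exists on a neighborhood of $f$ independent of $p$; but this has to be built into the argument. Your first route, perturbing along $\orb(q)$ itself, is useless for a different reason: it moves the reference vector $L(q_g)$ together with the point.
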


\begin{remark}[Further related results]
\label{r.furtherresults}
In \cite{DonHouTia:}, the authors generalize Katok-like horseshoe constructions \cite{Kat:80} by simultaneously approximating arbitrary convex subsets of the space of invariant measures. In a purely hyperbolic context, they apply this framework to a class of transitive Anosov diffeomorphisms admitting a global (not necessarily dominated) splitting that is ``multi average-conformal'',
 meaning that each ergodic measure has constant Lyapunov exponents within each subbundle. They then study the relative interior of the associated Lyapunov spectrum.

In an {\em{a priori}} nonhyperbolic context, \cite[Theorem 3.2]{AviCroWil:21} refines Katok-type constructions by ``eliminating multiplicities'': for a nearby diffeomorphism, it is constructed a linear horseshoe with a dominated splitting into one-dimen\-sio\-nal subbundles. By considering perturbations of this horseshoe and homoclinic relations among its saddles, one obtains open sets of diffeomorphisms whose horseshoes have Lyapunov spectra with nonempty interior.

By contrast, our approach relies on the GIKN method rather than on horseshoe approximations. Theorem~\ref{Thm:periodic-interior-spectrum} below shows that, for every nontrivial homoclinic class $H$ of a generic diffeomorphism $f$, the set $\mathrm{int}(\mathcal{L}(H))$ is nonempty. Outside the generic setting, however, this set may be empty; this happens, for instance, for conformal Anosov toral automorphisms, where the spectrum is a singleton. Variations of this example can be obtained, for instance, by considering skew products whose base dynamics is given by such a toral automorphism.
\end{remark}

Our final result is related to the notion of the \emph{Lyapunov graph},
formulated for periodic measures in \cite{BocBon:12}. Here we extend this notion to arbitrary ergodic measures. The Lyapunov graph encodes the Lyapunov spectrum by considering 
 partial sums of the Lyapunov exponents 
(counted with multiplicity). It was used to study perturbations of Lyapunov exponents, in particular, 
\cite[Theorem~2]{BocBon:12} characterizes the graphs that can be obtained 
via local perturbations of periodic orbits. This is particularly relevant in nonconformal cases, compare Remark~\ref{r.furtherresults}.

To proceed, we need the concept of a dominated splitting, see Section~\ref{ss.hyperdomination} for details. Recall that a dominated splitting\footnote{We order the bundles of a dominated splitting compatibly with the Lyapunov spectrum, that is, $F^1$ denotes the strongest bundle and $F^k$ the weakest one.} over an $f$-invariant set $\Lambda$,
\[
T_\Lambda M = F^1 \oplus \cdots \oplus F^k,
\]
 is called \emph{finest} if none of the subbundles $F^j$ admits a further dominated splitting for $j=1,\ldots,k$ (that is, each $F^j$ is \emph{indecomposable}). 
The finest dominated splitting is well defined and unique, see~\cite[Proposition~4.11]{BonDiaPuj:03}. It is characterised by the property that any dominated splitting over $\Lambda$ is obtained by grouping together (that is, by taking direct sums of) the bundles $F^i$.

Note that given $\mu \in \mathcal{M}_{\mathrm{erg}}( f)$ its Oseledets splitting $
T_\Gamma M = \bigoplus_{i=1}^{\ell} E^i$ in \eqref{e.oseledetselyapunov} 
(over a set $\Gamma$ with $\mu(\Gamma)=1$) may fail to be dominated. The absence of domination has an impact on the Lyapunov spectrum of perturbations of the dynamics (see \cite{BocBon:12} where periodic measures and ``their perturbations'' are considered). To deal with this, we need to look simultaneously at the finest dominated splitting of the support of $\mu$,
\begin{equation}
\label{e.finestdominated}
T_{\supp(\mu)}M=F^1\oplus\cdots\oplus F^k.
\end{equation}
Note that $\supp (\mu)$ is the closure of $\Gamma$. 
Observe also that every bundle of the Oseledets splitting is necessarily contained in some bundle of the splitting in \eqref{e.finestdominated}, hence $\ell\ge k$.
The interesting case occurs when $\ell>k$, as discussed below.

We now introduce the {\em averaged Lyapunov exponents} and the {\em averaged Lyapunov spectrum} of $\mu$. Recall the Lyapunov spectrum $L(\mu)$ of $\mu$ in \eqref{e.Lyapunovspectrummeasure}. 
Set
\[
d_0 \eqdef 0, \qquad 
d_j \eqdef \dim(F^1\oplus\cdots \oplus F^{j}) \quad \text{for } j=1,\dots,k,
\]
and define
\[
\widehat \lambda_j(\mu) \eqdef 
\frac{\sum\limits_{r=d_{j-1}+1}^{d_j}\lambda_r(\mu)}{\dim(F^j)}
=
\frac{\sum\limits_{r=d_{j-1}+1}^{d_j}\lambda_r(\mu)}{d_j-d_{j-1}}
\]
as the average of the Lyapunov exponents whose Oseledets bundles are contained in $F^j$.
Here it is convenient to write the averages using the numbers $\lambda_r(\mu)$, 
which enumerate the Lyapunov exponents with multiplicities. Note that each 
$\chi_i(\mu)$ coincides with some $\lambda_r(\mu)$.
Accordingly, we define the {\em averaged Lyapunov spectrum of $\mu$}
\begin{equation}\label{e.Lhatmu}
\widehat L(\mu) \eqdef 
(\underbrace{\widehat \lambda_1(\mu),\dots,\widehat \lambda_1(\mu)}_{\dim(F^1)},
\dots,
\underbrace{\widehat \lambda_k(\mu),\dots,\widehat \lambda_k(\mu)}_{\dim(F^k)}).
\end{equation}
For periodic measures, this vector recovers the Lyapunov graph in~\cite{BocBon:12}.

Note that $\widehat L (\mu)$ and $L(\mu)$ are different if, and only if, $\ell> k$. Indeed, if $\ell>k$ and, for instance, $F^1$ contains $E^1\oplus E^2$ from the Oseledets splitting \eqref{e.oseledetselyapunov}, then $\widehat \lambda_1(\mu) < \lambda_1(\mu)$. Similarly, for the other coordinates of $\widehat L (\mu)$ and $L(\mu)$.

\begin{theoremalph}\label{Thm:application-Bochi-Bonatti}
	There exists a residual\,\footnote{The residual set
$\mathcal{R}''$ is contained in the set $\mathcal{R}'$ in Theorem~\ref{Thm:periodic-interior-spectrum}} 
subset $\mathcal{R}'' $ of $\diff^1(M)$ such that for every $f\in\mathcal{R}'$, every  nontrivial isolated homoclinic class $H$ of $f$, and every $\mu\in\mathcal{M}_{\rm erg}(H)$ it holds
\[
	\left\{L\colon L=t\cdot L(\mu)+(1-t)\cdot \widehat L(\mu), t\in[0,1]\right\}
	\subset \mathcal{L}(H).
\]
\end{theoremalph}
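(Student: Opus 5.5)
We plan to reduce Theorem~\ref{Thm:application-Bochi-Bonatti} to perturbations of periodic orbits in the spirit of \cite{BocBon:12}, combined with the $C^1$-generic approximation of ergodic measures by periodic ones from \cite[Theorems 3.5 and 3.8]{AbdBonCro:11}. Since $\mathcal L(H)$ is closed, it suffices to show that every vector $t L(\mu)+(1-t)\widehat L(\mu)$ is a limit of spectra $L(p_n)$ of periodic points $p_n\in H$. By \eqref{e.samedef} we may use $\mathcal L_{\rm per}(H)$ throughout.

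\emph{Step 1: approximation.} By \cite{AbdBonCro:11} and the isolation of $H$, we choose periodic points $p_n\in H$ with $\mu_{p_n}\to\mu$ weak$^\ast$, $L(p_n)\to L(\mu)$, and with $\orb(p_n)$ converging in the Hausdorff topology to $\supp(\mu)$. We then use the robustness of dominated splittings. If $\orb(p_n)$ lies in a small neighbourhood of $\supp(\mu)$, the splitting $F^1\oplus\cdots\oplus F^k$ extends to $\orb(p_n)$ with the same dimensions. Conversely, the finest dominated splitting of $\orb(p_n)$ (with uniform constants) cannot be finer than that of $\supp(\mu)$ in the limit: a uniform domination on the orbits $\orb(p_n)$, which accumulate on the whole of $\supp(\mu)$, would pass to $\supp(\mu)$. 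Hence, passing to a subsequence, for every $N$ the restriction of $Df^{\pi(p_n)}$ to each $F^j(p_n)$ has no $N$-dominated splitting for $n$ large.

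\emph{Step 2: perturbation.} Here we invoke \cite[Theorem~2]{BocBon:12}. For a periodic orbit of large period without $N$-domination inside a block $F^j$, one can perform a small $C^1$ perturbation along the orbit, supported in an arbitrarily small neighbourhood of $\orb(p_n)$. This perturbation moves the Lyapunov graph continuously, keeping the partial sums $\sum_{r\le d_j}\lambda_r$ fixed, from the original exponents within $F^j$ towards their average. In particular, the segment between $L(p_n)$ and the averaged vector $\widehat L(p_n)$ is reached, with size of perturbation $\varepsilon(N)\to0$ as $N\to\infty$. Since $\widehat L(p_n)\to\widehat L(\mu)$, because the partial sums over $F^j$ are continuous, we obtain for $g$ arbitrarily $C^1$-close to $f$ a periodic orbit $\orb_g(p_n)$ with spectrum close to $tL(\mu)+(1-t)\widehat L(\mu)$. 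The perturbation preserves the orbit, so the orbit remains close to $H$, and the hyperbolicity type can be controlled. The orbit must also remain homoclinically related to the continuation of $H$; for this we use the connecting lemma perturbation of \cite{BonCro:04}, or restrict to a rational dense set of values of $t$ where the resulting spectra are hyperbolic.

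\emph{Step 3: genericity.} This is a standard Baire argument: using a countable basis of open sets of vectors and neighbourhoods, define the property ``$H(p_g,g)$ has a periodic point with spectrum in the open set $O$''. This property is open, by hyperbolic continuation. By Step 2 it holds densely wherever the segment meets $O$ for nearby $g$. Continuity points of the lower semicontinuous map $g\mapsto\mathcal L_{\rm per}(H(p_g,g))$, which form a residual set, then transfer the conclusion back to $f$ itself.

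The main obstacle is Step 1/2 jointly: ensuring that the approximating periodic orbits genuinely lack domination inside each $F^j$ with bounds uniform in $n$, so that \cite{BocBon:12} applies with perturbations of vanishing size. We would first argue by contradiction: if $N$-domination held inside $F^j$ along infinitely many $\orb(p_n)$ with $N$ fixed, Hausdorff convergence would give an $N$-dominated splitting of $F^j$ over $\supp(\mu)$, contradicting the fact that the splitting $F^1\oplus\cdots\oplus F^k$ is finest.
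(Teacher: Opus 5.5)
Your outline is sound, but you fill the segment differently from the paper. The paper perturbs only towards the flattened endpoint. It uses \cite[Theorem 2]{BocBon:12} plus a Baire argument to get periodic orbits in $H$ with spectra close to $\widehat L(\mu)$, and takes $L(\mu)\in\mathcal L(H)$ from Item~\ref{generic:lyapunovvector} of Theorem~\ref{Thm:generic-properties}. The rest of the segment then comes from the convexity of $\mathcal L(H)$ (Item~\ref{generic:compact-convex}, i.e.\ \cite[Corollary 2]{Abdetal:07}). You instead use the full Bochi--Bonatti description of reachable spectra to realize each point of the segment directly. That description covers every concave Lyapunov graph between the original one and the one flattened on the blocks of the finest $N$-dominated splitting, and $tL+(1-t)\widehat L$ is such a graph. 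So you never need convexity, which is a nontrivial input. Moreover, if you add to the Baire property that $\mu_q$ lies in a prescribed weak$^\ast$ neighbourhood of $\mu$, you get the sharper fact that each point of the segment is a limit of $L(q_n)$ with $\mu_{q_n}\to\mu$. The paper's route, in exchange, needs only the endpoint and a single Baire argument. Your Step~1 shows there is no uniform $N$-domination inside each $F^j$ along $\orb(p_n)$, by passing $N$-dominated splittings to the Hausdorff limit $\supp(\mu)$. This is exactly the hypothesis needed to apply \cite{BocBon:12} with vanishing perturbations, and the paper's sketch uses it tacitly. If $\mu$ is periodic the segment is a single point, so you may also assume $\pi(p_n)\to\infty$.

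Step~3, however, does not work as written. The property ``$H(p_g,g)$ contains a periodic point with spectrum in $O$'' is not open when that point has index different from that of $p_g$. The Bochi--Bonatti perturbation may change the index and does not by itself preserve homoclinic relations; Gourmelon's version \cite{Gou:07} needs a hyperbolic path, hence a fixed index. For the same reason the lower semicontinuity of $g\mapsto\mathcal L_{\rm per}(H(p_g,g))$ is not justified. Restricting to values of $t$ with hyperbolic spectra gives only hyperbolicity, not membership in the class. Such $t$ need not even exist: if $\lambda_r(\mu)=\widehat\lambda_j(\mu)=0$ with $r$ in the $j$th block, the $r$th coordinate vanishes along the whole segment.

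The repair is to use isolation, as the paper does. Take an isolating neighbourhood $U$ of $H$ from a countable family of finite unions of basic open sets; any open set between $H$ and an isolating neighbourhood is again isolating. Then use the open property ``$g$ has a hyperbolic periodic orbit contained in $U$ with spectrum in $O$''. The perturbed orbit is still $\orb(p_n)\subset H\subset U$, and one more tiny Franks perturbation makes it hyperbolic. So $f$ lies in the closure of this open set, hence, by genericity, in the set itself. The orbit obtained for $f$ then lies in $\bigcap_{i\in\mathbb Z}f^i(U)=H$, with no homoclinic relations or connecting lemma needed.
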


We will derive Theorem \ref{Thm:application-Bochi-Bonatti} from the above results together with \cite[Theorem 2]{BocBon:12} and~\cite[Theorem 3.8]{AbdBonCro:11}.

\subsection{Further remarks}
We close this introduction by putting our results into the broader context of multifractal analysis. Given a vector-valued function $\Phi\colon M\to\mathbb R^k$ with \emph{continuous} coordinate functions $\phi_1,\ldots$, $\phi_k$, consider the  map
\[
	\Phi_\ast\colon\cM_{\rm inv}(f)\to\mathbb R^k,\quad
	\Phi_\ast(\mu)
	\eqdef \Big(\int\phi_1\,d\mu,\ldots,\int\phi_k\,d\mu\Big).
\]
Note that $\Phi_\ast(\mu)$ is also called the \emph{rotation vector} of $\mu$ (introduced in \cite{Zie:95}, see \cite{Jen:01} for a survey). As $\Phi_\ast$ is affine and continuous and $\cM_{\rm inv}(f)$ is convex and weak$^\ast$ compact, the rotation set $\Phi_\ast(\cM_{\rm inv}(f))$ is compact and convex. Rotation sets provide a natural framework for the study of multifractal analysis; see, for example, \cite{BarSauSch:02}. The analysis of the (multifractal) Lyapunov spectrum is, in general, more delicate, as individual Lyapunov exponents are given by measurable functions, which are typically not continuous. Extending ``additive'' multifractal analysis to the more general ``sub-additive'' context, with applications to the Lyapunov spectrum, was given, for example, in \cite{FenHua:10}.

Providing just a glimpse of natural extensions, one can study either the spectra while simultaneously controlling several dynamical quantifiers. Given one continuous function $\phi\colon M\to\mathbb R$ let 
\[
	\cS(\alpha)
	\eqdef\left\{\mu\in\cM_{\rm erg}(f)\colon \int\phi\,d\mu=\alpha\right\}.
\]
In a uniformly hyperbolic context, for any $\alpha\in\interior(\phi(\cM_{\rm erg}(f)))$ \cite{HouTia:24} proves the existence of measures in
\[
	\left\{\mu\in\cM_{\rm erg}(f)\colon \int\phi\,d\mu=\alpha,h_\mu(f)= \beta\right\},
	\quad \text{ for every }\quad
	\beta\in[0,\sup_{\mu\in\cS(\alpha)}h_\mu(f)],
\]
where $h_\mu(f)$ stands for the entropy of $\mu$.
This type of question was also addressed in \cite{KucWol:16}.

One can also fix one quantifier and simultaneously study the structure of the corresponding ``sections in the space of ergodic measures''. In this direction, in certain partially hyperbolic contexts with one-dimensional center direction $E^{\mathrm{c}}$ and with $\phi$ being the function determining the center Lyapunov exponent $\chi^{\mathrm{c}}(\mu)=\int\phi\,d\mu$ of a measure, in \cite{DiaGelRamZha:} it is shown that for all admissible values $\alpha$,
\[
	\left\{\mu\in\cM_{\rm erg}(f)\colon \chi^{\mathrm{c}}(\mu)=\alpha\right\}
\]
contains a weak$^\ast$ path of measures whose entropy continuously varies between $0$ and any value arbitrarily close to $\sup_{\mu\in\cS(\alpha)}h_\mu(f)$. Moreover, also in a partially hyperbolic context, \cite{CriDia:} shows that each nonempty set $\cS(\alpha)$ densely contains axiomatized GIKN measures.

This paper is organized as follows. In Section~\ref{s.preliminaries}, we recall the main ingredients and concepts used throughout the work (dominated splittings, Lyapunov exponents and Oseledets splittings, axiomatized GIKN measures, and properties of homoclinic classes of $C^1$-generic diffeomorphisms). In the subsequent sections, we prove the main theorems following alphabetical order. In particular, in Section \ref{ss.generic}, we construct the residual subset $\mathcal{R}$ in Theorems \ref{Thm:Lyapunov-spectrum} and \ref{Thm:isolated-ergodic-denssness}. 

\begin{notation}
Throughout this paper, convergence of a sequence of measures is always understood with respect to the weak$^\ast$ topology, accordingly, we will omit explicit reference to it.

Similarly, the space $\diff^1(M)$ is always endowed with the $C^1$-topology, and we will not mention this explicitly. Moreover, all perturbations are in the $C^1$-topology and assumed to be arbitrarily small.

For the sake of precision, these specifications will be stated explicitly in theorems, propositions, lemmas, and definitions, though they will be omitted in the remainder of the paper.
\end{notation}

\section{Preliminaries}\label{s.preliminaries}

We now introduce the main ingredients of this paper:
hyperbolicity and dominated splittings (Section \ref{ss.hyperdomination}), 
Lyapunov exponents and Oseledets splittings (Section~\ref{ss.lyapunov}),
convergence to ergodic measures and the Axiomatized GIKN measures (Section~\ref{ss.GIKN}),
and
$C^1$-generic properties of diffeomorphisms (Section \ref{ss.generic}).
Throughout this section, we fix $f\in\diff^1(M)$. Recall that $m=\dim (M)$.
\subsection{Dominated splittings}
\label{ss.hyperdomination}

\begin{definition}[Hyperbolicity]\label{Def:hyperbolicity}
	An $f$-invariant compact set  $\Lambda$ is \emph{hyperbolic} if there are a continuous $Df$-invariant splitting $T_\Lambda M=E^{\mathrm{u}}\oplus E^{\mathrm{s}}$ and  constants $\lambda>0$ and $C>1$ such that for every $x\in \Lambda$ and every $n\in\mathbb{N}$, it holds:
	\begin{itemize}	
		\item
		$\|Df^{n}(v^{\mathrm{s}})\|\leq C\cdot e^{-n\lambda}\|v^{\mathrm{s}}\|$ for every $v^{\mathrm{s}}\in E^{\mathrm{s}}_x$;
		\item
		$\|Df^{-n}(v^{\mathrm{u}})\|\leq C\cdot e^{-n\lambda}\|v^{\mathrm{u}}\|$ for every 
		$v^{\mathrm{u}}\in E^{\mathrm{u}}_x$.
	\end{itemize}
\end{definition}
A periodic point $p$ of $f$ is hyperbolic if, and only if, its orbit $\orb(p)$ is a hyperbolic set.

\begin{definition}[Dominated splitting]\label{Def:DS}
Let  $\Lambda$ be an $f$-invariant compact set. 
A continuous $Df$-invariant splitting with two nontrivial bundles
$
T_\Lambda M = F \oplus E
$
is called \emph{dominated} if there exists a positive integer $N$ such that for every $x \in \Lambda$,
\[
\|Df^{N}|_{E_{x}}\| \cdot \|Df^{-N}|_{F_{f^{N}(x)}}\| < \frac{1}{2}.
\]
More generally, a $Df$-invariant splitting
$
T_\Lambda M = E^1 \oplus \cdots \oplus E^k,
$
with $k \geq 2$ nontrivial bundles, is called \emph{dominated} if for every $1 \leq i < k$, the two-bundle splitting
\[
(E^1 \oplus \cdots \oplus E^i) \oplus (E^{i+1} \oplus \cdots \oplus E^k)
\]
is dominated.
\end{definition}

Note that we order the bundles of a dominated splitting in decreasing strength: $F^1$ is the strongest bundle and $F^k$ the weakest one.

For further background on dominated splittings, we refer to \cite[Appendix B1]{BonDiaVia:05}.

\subsection{Lyapunov exponents and exterior products} \label{ss.lyapunov}

Let $\mu$ be an ergodic measure. Recall its Oseledets splitting $T_\Gamma M = \bigoplus_{i=1}^{\ell} E^i$, 
 its Lyapunov exponents $\chi_1(\mu)>\cdots>\chi_\ell(\mu)$ from \eqref{e.oseledetselyapunov} and \eqref{e.expolyapunov}, and its Lyapunov spectrum
$L(\mu)=
	(\lambda_1(\mu),\dots, \lambda_m(\mu))$ from
\eqref{e.Lyapunovspectrummeasure}.
	
  The \emph{Lyapunov spectrum} of a measure $\mu\in \mathcal{M}_{\mathrm{inv}}(f)$
  with ergodic decomposition $\int \nu_\theta d \tau(\theta) $, where each $\nu(\theta)$ is an ergodic measure,
  is defined by
\[
L(\mu)\eqdef 
(\lambda_1(\mu),\dots, \lambda_m(\mu)), \,\,
\mbox{where} \,\,
\lambda_i(\mu)\eqdef 
\int \lambda_i(x) d\nu_\theta(x) d \tau(\theta)  \quad \text{for $i=1,\dots,m$,}
\]
where 
$\lambda_i(x)=\lambda_i(\nu_\theta)$ if $x$ is a generic point of $\nu_\theta$.

As in the case of ergodic measures, by definition, $\lambda_i(\mu) \geq \lambda_{i+1} (\mu)$.
The measure $\mu$ has \emph{simple  spectrum} if 
$\lambda_1(\mu)>\cdots> \lambda_m(\mu).$

We now determine the values $\lambda_i (\mu)$ using exterior products.
For   $i=0,1,\dots,m$ and $k\in\mathbb{N}_{\geq 1}$, define the function
\[
L_i^{(k)}(x,f)\eqdef \frac{1}{k}\log\|\wedge^i Df^k(x)\|,
\]
where $\wedge^i$ denotes exterior product of power $i$. Note that $L_i^{(0)}$ is the identity map.

\begin{remark}
\label{r.Lkiscont}
For every $k\in\mathbb{N}$,
the function $L_i^{(k)}(x,f)$ depends continuously on both $x\in M$ and $f\in\diff^1(M)$.
\end{remark}

Given $\mu\in\mathcal{M}_{\rm inv}(f)$, define
\[
L_i(\mu,f)\eqdef \liminf_{k\rightarrow \infty} \int L_i^{(k)}(x,f) d\mu(x).
\] 
Note that, as $(L_i^{(k)}(\cdot,f))_k$ is sub-additive, by Kingman's ergodic theorem, 
\[
L_i(\mu,f)\eqdef \lim_{k\rightarrow \infty} \int L_i^{(k)}(x,f) d\mu(x) 
= \inf_{k\in\mathbb{N}} \int L_i^{(k)}(x,f) d\mu(x).
\]

It holds, see for instance \cite[Section 3]{Rue:79},
\[
\lambda_i(\mu)=L_i(\mu,f)-L_{i-1}(\mu,f) \qquad  \text{for every $i=1,\dots,m$.} 
\]

\subsection{Convergence to ergodic measures: Axiomatized GIKN measures}
\label{ss.GIKN}
We now introduce
a (topological) criterion guaranteeing the convergence of a sequence of periodic measures to an ergodic measure.
Using this criterion, we introduce the
 so-called {\em{Axiomatized GIKN (ergodic) measures}} defined in \cite{KwiLac:}. 

Let us introduce some notation. In what follows,  given a finite set $X$, we denote by
 $\# X$ the number of its elements. Also, given a point $x$, the orbit of $x$ by $f$ is denoted by $\orb(f,x)$, or simply by $\orb(x)$ when $f$ is clear from the context. 
 
\begin{definition}[Good approximation] \label{Def:good-approximation}
	Given $p,q \in \per (f)$ and constants $0<\kappa\leq 1,\gamma>0$, we say $\orb(q)$ is a {\emph{$(\gamma,\kappa)$-good approximation}}  of $\orb(p)$ if there are a subset $X$ of $\orb(q)$ and a map $\rho\colon X\rightarrow \orb(p)$ such that:
	\begin{itemize}
		\item $\dfrac{\#X}{\pi(q)}>\kappa$;
		\item  $d(f^i(x),f^i(\rho(x)))<\gamma$ for every $x\in X$ and every $i\in\{0,1,\ldots,\pi(p)-1\}$;
		\item $\#\rho^{-1}(y)$ does not depend on $y\in\orb(p)$.
	\end{itemize}
\end{definition}

Note that the space  $\mathcal{M}_{\rm erg}(f)$ is in general noncompact
 and hence a sequence of ergodic measures may fail to converge to an ergodic one. The following criterion introduced in~\cite{Goretal:05}, see also~\cite{BonDiaGor:10}, 
guarantees the convergence to an ergodic measure of a sequence of periodic measures.

 \begin{lemma}[GIKN-criterion~{\cite{Goretal:05,BonDiaGor:10}}] \label{Lem:GIKN-criterion}
	Let $\{\orb(p_n)\}$ be a sequence of periodic orbits of $f$ and $\{\kappa_n\}$ and $\{\gamma_n\}$  sequences of real numbers with $\kappa_n\in (0,1]$ and $\gamma_n>0$, such that 
		\begin{itemize}
		\item 
		$\sum_{n=1}^\infty \gamma_n<+\infty$ and $\prod_{n=1}^{\infty} \kappa_n>0$,
		\item $\orb(p_{n+1})$ is a $(\gamma_n,\kappa_n)$-good approximation of $\orb(p_n)$ for every $n\geq 1$.
	\end{itemize} 
	Then the sequence of periodic measures  $\mu_{p_n}$ 
	weak$^\ast$ converges to an aperiodic measure $\mu\in \mathcal{M}_{\rm erg} (f)$ such that
	\[
	\supp(\mu)=\bigcap_{n\geq 1}\overline{\bigcup_{j\geq n} \orb(p_j)}.
	\]
\end{lemma}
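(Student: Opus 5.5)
The plan is the classical Oxtoby-type argument for generic points, combined with a Cauchy-type estimate on the measures. Two things must be shown: weak$^\ast$ convergence of $\mu_{p_n}$ to some invariant $\mu$, and ergodicity of $\mu$. The support formula and aperiodicity will then follow from the good-approximation structure.

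\emph{Convergence.} Fix a continuous $\varphi$ and let $\omega$ be its modulus of continuity. Assume first that $\kappa_n$ is close to $1$. Partition $\orb(p_{n+1})$ into the set $X$ and its complement. Since $\#\rho^{-1}(y)$ is constant, the set $X$ splits into consecutive blocks of length $\pi(p_n)$, each of which $\gamma_n$-shadows a full copy of $\orb(p_n)$. This needs to be checked: one must pick $X$ as a union of $\pi(p_n)$-segments, or else average over the fibres of $\rho$, and I would use the uniform fibre cardinality directly, with
\[
\frac{1}{\pi(q)}\sum_{x\in X}\varphi(x)\approx \frac{\#X}{\pi(q)}\int\varphi\,d\mu_{p_n}
\]
up to an error $\omega(\gamma_n)$. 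It follows that
\[
\Big|\int\varphi\,d\mu_{p_{n+1}}-\int\varphi\,d\mu_{p_n}\Big|\le \omega(\gamma_n)+2(1-\kappa_n)\|\varphi\|_\infty .
\]
Because $\sum(1-\kappa_n)<\infty$ is equivalent to $\prod\kappa_n>0$, the errors $2(1-\kappa_n)\|\varphi\|_\infty$ are summable. The terms $\omega(\gamma_n)$ need not be summable, however, so I would work with Lipschitz $\varphi$ (a dense class), for which $\omega(\gamma_n)\le \mathrm{Lip}\,\gamma_n$ is summable. The sequence is then Cauchy on a dense set of functions, hence weak$^\ast$ convergent to an $f$-invariant $\mu$.

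\emph{Ergodicity.} This is the main obstacle. The key object is the set $G_n\subset \orb(p_m)$, $m>n$, of points whose whole $\pi(p_n)$-segment $\sum_j(\gamma_j)$-shadows a point of $\orb(p_n)$. Composing the maps $\rho$ along the chain shows that $G_n$ has proportion at least $\prod_{j\ge n}\kappa_j\to1$. Fix a Lipschitz $\varphi$ and set $c_n=\int\varphi\,d\mu_{p_n}\to\int\varphi\,d\mu$. For points in $G_n$, the Birkhoff average over $\pi(p_n)$ steps is within $\mathrm{Lip}\sum_{j\ge n}\gamma_j$ of $c_n$. Passing to the limit, for $\mu$, the set $A_n$ of points whose $\pi(p_n)$-Birkhoff average deviates from $\int\varphi\,d\mu$ by more than $\varepsilon_n\to0$ has measure at most $1-\prod_{j\ge n}\kappa_j$, which tends to $0$. (Here one uses open/closed set semicontinuity together with continuity of the finite-time average.)

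Hence $\frac1{\pi(p_n)}S_{\pi(p_n)}\varphi\to\int\varphi\,d\mu$ in $\mu$-measure. By Birkhoff's theorem, $\frac1{N}S_N\varphi$ converges $\mu$-a.e. to $\mathbb E(\varphi\mid\mathcal I)$, and along the subsequence $N=\pi(p_n)$ it converges in measure to a constant. Therefore $\mathbb E(\varphi\mid\mathcal I)$ is constant a.e. Since Lipschitz functions are dense in $L^1$, $\mu$ is ergodic.

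\emph{Support and aperiodicity.} For the inclusion $\supp\mu\subset\bigcap_n\overline{\bigcup_{j\ge n}\orb(p_j)}$, note that weak$^\ast$ limits are supported in the limsup of the supports. For the reverse inclusion, take $z$ in the right-hand side, so $z$ is close to $\orb(p_j)$ for infinitely many $j$. Closeness of $z$ to some $x\in\orb(p_j)$, together with the uniform proportion of shadowing (the fibres of $\rho$ are equidistributed), gives each small ball around $z$ a $\mu_{p_m}$-mass bounded below by roughly $\frac{1}{\pi(p_j)}\prod_{i\ge j}\kappa_i$ for all $m>j$. This lower bound passes to $\mu$, so $z\in\supp\mu$. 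Finally, if $\mu$ were periodic, supported on an orbit of period $P$, then $\pi(p_n)\to\infty$ (forced by $\kappa_n\le 1$ with constant fibres, after excluding the trivial repeating case). The mass bound would then contradict $\mu$ giving mass $1/P$ to a point. Alternatively, one can note that the support is a limit of sets carrying many distinct shadowed copies.
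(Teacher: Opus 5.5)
The paper gives no proof of this lemma; it quotes it from \cite{Goretal:05,BonDiaGor:10}. Your arguments for convergence, ergodicity and the support formula are correct and follow the standard GIKN-type route:
\begin{itemize}
\item fibre-uniformity of $\rho$ gives the Cauchy estimate on Lipschitz functions;
\item composing the projections gives a subset of $\orb(p_m)$ of proportion $>\prod_{j\ge n}\kappa_j$ on which $\pi(p_n)$-Birkhoff averages are within $\mathrm{Lip}(\varphi)\sum_{j\ge n}\gamma_j$ of $\int\varphi\,d\mu_{p_n}$;
\item the open-set portmanteau inequality and Birkhoff's theorem then give ergodicity.
\end{itemize}
You should make two points explicit. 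First, the definition forces $\pi(p_{n+1})\ge\pi(p_n)$: since $X\neq\emptyset$, all fibres of $\rho$ have the same positive size. This monotonicity is what lets you chain the shadowing over $\pi(p_n)$ iterates. Second, applying Birkhoff along $N=\pi(p_n)$ requires $\pi(p_n)\to\infty$. If the periods are eventually equal to some $P$, then each $\rho$ is a bijection and the orbits converge to the orbit of a fixed point of $f^P$. In that case $\mu$ is a periodic measure, which is ergodic trivially, so this case must be handled separately.

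The genuine gap is aperiodicity, and it cannot be closed. Your bound $\mu(\overline{B(z,r)})\ge \pi(p_j)^{-1}\prod_{i\ge j}\kappa_i$ is a lower bound. It is perfectly compatible with $\mu$ having an atom of mass $1/P$; excluding atoms would need an upper bound, which nothing in the hypotheses provides. Your claim that $\pi(p_n)\to\infty$ is forced is also false. Let $f$ be the identity near $x_0$, and take distinct fixed points $p_n\to x_0$ with $d(p_n,p_{n+1})<\gamma_n$ and $\kappa_n=1-2^{-n}$. All hypotheses hold, yet $\mu_{p_n}\to\delta_{x_0}$.

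Growing periods do not rescue the claim either. Take $f$ which near a fixed point $x_0$ is, in polar coordinates, $(r,\theta)\mapsto(r,\theta+2\pi(\alpha_0+r^2))$ with $\alpha_0\notin\mathbb{Q}$. Let $\orb(p_n)$ lie on a circle of radius $r_n<2^{-n}$ where $\alpha_0+r_n^2=a_n/b_n$ in lowest terms, with $b_{n+1}\ge 2^{n+1}b_n$. Let $X$ be any $\lfloor b_{n+1}/b_n\rfloor b_n$ points of $\orb(p_{n+1})$, and $\rho$ any map onto $\orb(p_n)$ with equal fibres. Then $\orb(p_{n+1})$ is a $(C2^{-n},1-2^{-n})$-good approximation of $\orb(p_n)$ and the periods tend to infinity. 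Nevertheless $\mu_{p_n}\to\delta_{x_0}$, which is a periodic measure.

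So what the hypotheses actually give is ergodicity plus the support formula. Since an ergodic measure is periodic exactly when its support is finite, $\mu$ is aperiodic if and only if $\bigcap_{n}\overline{\bigcup_{j\ge n}\orb(p_j)}$ is infinite. That is how aperiodicity has to be obtained in the paper's applications. There this set equals the nontrivial homoclinic class $H$, by the $\delta/2^n$-density of $\orb(p_n)$, and $H$ is infinite.
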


\begin{definition}[Axiomatized GIKN measures] 
\label{def:GIKN}
We call a sequence of periodic measures satisfying
Lemma~\ref{Lem:GIKN-criterion} {\em{GIKN}} and its limit 
{\em{Axiomatized GIKN measure,}} or simply {\em{GIKN measure.}} 
\end{definition}

\begin{remark}[Properties of GIKN measures] 
GIKN measure are ergodic   \cite{Goretal:05} and have rank-one \cite{KwiLacTri:}
and thus they have zero entropy (see also \cite{KwiLac:}).
\end{remark}

\subsection{Homoclinic classes of $C^1$-generic diffeomorphisms}\label{ss.generic}

We now define the residual subset $\mathcal{R}$ of $\mathrm{Diff}^1(M)$ in the statement of our results. 

We call a {\em{saddle}} a hyperbolic periodic point whose stable and unstable sets are nontrivial. The {\em{index}} of a saddle is the dimension of its stable bundle.

\subsubsection{An auxiliary residual set of $\diff^1(M)$} 

We introduce a residual set $\mathcal{R}_0$ of $\diff^1(M)$ whose diffeomorphisms satisfy a series of ``good generic properties''. 
 
\begin{theorem}[$C^1$-generic properties of diffeomorphisms]\label{Thm:generic-properties}
There exists a residual subset $\mathcal{R}_0$ of $\diff^1(M)$ such that every $f\in\mathcal{R}_0$ and every nontrivial homoclinic class $H$ of $f$ the following properties hold:  
	\begin{enumerate}
		\item\label{generic:KS}  
		All periodic orbits  of $f$ are hyperbolic and their stable and unstable manifolds intersect transversely (which allows empty intersections).
		
		\item \label{saddleswithsimplespectrum}
		The saddles with a simple spectrum are dense in $H$.
		\item\label{generic:compact-convex} 
		The set $\mathcal{L}(H)$ 
		(recall \eqref{e.Lyapunovspectrumset})
		is compact and convex.
		\item\label{generic:new-periodic-orbit}
		Let $p\in H$ be a saddle with simple spectrum. Then for every $\varepsilon>0$, every $\kappa\in(0,1)$, and every $\gamma>0$, there exists  saddles $q\in H$ such that
		\begin{itemize}
			\item  the set $\orb(q)$ is $\varepsilon$-dense in $H$ and $q$ has simple spectrum;
			\item  $\orb(q)$ is homoclinically related to $\orb(p)$ and is a $(\gamma,\kappa)$-good approximation of $\orb(p)$;
			\item  $\left|\lambda_i(q)-\lambda_i(p)\right|<\varepsilon$ for every $i=1,\dots,m$.
		\end{itemize}
	\item\label{generic:homorelated}
	Every homoclinic class $H$ is a chain recurrence class\footnote{An {\em{\(\epsilon\)-pseudo orbit}} is a sequence of points $(x_i)$ such that $d(f(x_i), x_{i+1}) < \epsilon$.
The \emph{chain recurrent set} $\mathrm{CR}(f)$ of $f$ is the set of points $x \in M$ such that, for every $\epsilon > 0$, there exists an 
$\epsilon$-pseudo orbit that starts and ends at $x$.
Two points $x,y \in \mathrm{CR}(f)$ are in the same \emph{chain recurrence class} if, for every $\epsilon > 0$, there exists an $\epsilon$-pseudo orbit from $x$ to $y$ and another from $y$ to $x$. 
These classes are either equal or disjoint.} and depends continuously on the diffeomorphism. 
Moreover,
		every pair of saddles of $H$ with the same index are homoclinically related. 
		Moreover, given any pair of saddles $p_f, q_f$ of $f\in \mathcal{R}_0$ there is a neighborhood of 
		$\,\mathcal{V}_f$ of $f$
		where the continuations of these saddles are well defined and such that either their homoclinic classes coincide for every $g\in \mathcal{V}_f \cap \mathcal{R}_0$ or their homoclinic classes
		are disjoint for every $g\in \mathcal{V}_f \cap \mathcal{R}_0$. 
		 	\item
		\label{generic:lyapunovvector}
For every $\mu \in \mathcal{M}_{\mathrm{inv}} (H,f)$, 
there exists a sequence of saddles $\{\orb(\bar p_n)\}$ whose periodic measures 
$\mu_{\bar p_n}$ weak$^\ast$ converges to $\mu$,  $\orb(\bar p_n)$ converges to $\supp(\mu)$, and $L(\bar p_n)$ converges to $L(\mu)$. 
Note that this applies to the specific case of ergodic measures.
	\end{enumerate}
\end{theorem}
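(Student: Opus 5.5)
This statement collects $C^1$-generic properties, each of which is either classical or already in the literature. The plan is to produce, for each item, a residual set $\mathcal{R}_0^{(j)}$ and to set $\mathcal{R}_0=\bigcap_j\mathcal{R}_0^{(j)}$. A countable intersection of residual sets is residual, so it suffices to justify each item separately. The standard device is to fix countable bases of open sets and rational parameters ($\varepsilon,\kappa,\gamma$). Each property, restricted to these data, is then written as an ``open and dense'' or ``lower/upper semicontinuity point'' condition, typically combined with continuity of hyperbolic continuations and Kupka--Smale.

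Item \eqref{generic:KS} is the Kupka--Smale theorem. Item \eqref{saddleswithsimplespectrum} is \cite[Proposition 2.3]{Abdetal:07}: simple spectrum can be obtained by small perturbations of the derivative along a periodic orbit homoclinically related to $p$ (Franks' lemma), and a genericity argument over a countable basis makes such saddles dense. Item \eqref{generic:homorelated} follows from \cite{BonCro:04}, which gives $H(p)$ equal to a chain recurrence class, together with the standard argument combining the connecting lemma and semicontinuity of $g\mapsto H(p_g)$. This yields continuity on a residual set, homoclinic relation of same-index saddles (as in \cite{Abdetal:07}), and the local dichotomy ``coincide or disjoint'' (by upper/lower semicontinuity at continuity points). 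Item \eqref{generic:lyapunovvector} is \cite[Theorems 3.5 and 3.8]{AbdBonCro:11}. Given $\mu$ invariant on $H$, the ergodic closing lemma of Ma\~n\'e and its refinement there produce periodic orbits converging to $\mu$ in the weak$^\ast$ topology, in the Hausdorff sense to $\supp(\mu)$, and in Lyapunov spectrum. Since every chain recurrence class coincides with $H$ generically, these orbits can be taken in the relevant class. For nonergodic $\mu$, one first approximates by finite convex combinations of ergodic measures and then glues via homoclinic relations, as done in \cite{AbdBonCro:11}.

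Item \eqref{generic:compact-convex} uses \cite[Corollary 2]{Abdetal:07}, which gives convexity of the closure of periodic spectra. Compactness is automatic since the exponents are bounded by $\log\max\|Df^{\pm1}\|$. To pass from $\mathcal{L}_{\rm per}(H)$ to $\mathcal{L}(H)$, I would use item \eqref{generic:lyapunovvector}. It gives $\mathcal{L}(H)\subset\overline{\{L(\bar p)\}}$, where the approximating saddles lie in $H$ once $\orb(\bar p_n)\to\supp(\mu)\subset H$ is combined with their homoclinic relation to saddles of $H$. This combination is a technical point that needs care for nonisolated classes; if it fails, I would state convexity for the periodic closure and note that the two coincide on the relevant classes.

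Item \eqref{generic:new-periodic-orbit} is the main obstacle and would be proved by a Baire argument. Fix rationals $\varepsilon,\kappa,\gamma$ and a hyperbolic $p$ with simple spectrum. First, inside a horseshoe containing $\orb(p)$ and a homoclinic point, use symbolic dynamics to pick a periodic orbit $q$. Its itinerary follows $\orb(p)$ for a proportion greater than $\kappa$ of its period, in blocks of length $\pi(p)$, which gives the good approximation with $\#\rho^{-1}(y)$ constant. Its remaining itinerary visits an $\varepsilon$-dense set of points of $H$, namely saddles homoclinically related to $p$, via a transitive horseshoe containing them. Because the visiting fraction is small and time near $p$ dominates, the spectrum of $q$ is $\varepsilon$-close to $L(p)$. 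Simplicity of the spectrum of $q$ can then be arranged by a Franks perturbation, which is an open condition. Second, the resulting property ``there is such a $q$'' is open in $f$, since hyperbolic continuations persist and all conditions are strict inequalities. Density then follows from the construction, and intersecting over the countable data gives residuality. The delicate part is keeping $\#\rho^{-1}(y)$ constant while also achieving $\varepsilon$-density, which is handled by making the excursion a fixed-length word.
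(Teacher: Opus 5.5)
\paragraph{Main gap: the spectral estimate in Item~\ref{generic:new-periodic-orbit}.} The paper does not prove this theorem. It assembles it from the literature: Kupka--Smale, \cite[Proposition~2.3, Corollary~2]{Abdetal:07}, \cite[Lemma~2.5]{Cheetal:19}, \cite{CarMorPac:03}, and \cite[Theorem~3.5(a)]{AbdBonCro:11} with \cite[Corollary~1.7]{BocBon:12}. Your citation-based treatment of most items therefore matches it. The genuine gap is in Item~\ref{generic:new-periodic-orbit}, the one item you actually try to prove.

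The step ``the visiting fraction is small and time near $\orb(p)$ dominates, so the spectrum of $q$ is $\varepsilon$-close to $L(p)$'' is not true as stated. Up to small errors, $M_q$ is conjugate to $T M_p^N$, where $T$ is the derivative along the fixed excursion. Sub-multiplicativity of $\|\wedge^i\cdot\|$ only gives the one-sided bounds $\lambda_1(q)+\cdots+\lambda_i(q)\le\lambda_1(p)+\cdots+\lambda_i(p)+o(1)$ as $N\to\infty$. Near equality is automatic only for $i=m$, via the determinant. The reverse inequalities require $T$ to be in general position with respect to the eigenflag of $M_p$, i.e.\ $T(E^1\oplus\cdots\oplus E^i)\cap(E^{i+1}\oplus\cdots\oplus E^m)=\{0\}$ for every $i$.

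A horseshoe does not give this for free. Suppose $p$ is fixed with $Df(p)|_{E^{\rm s}}=\diag(1/2,1/4)$, and the excursion exchanges the two stable eigendirections; this is easy to realize in a horseshoe with two-dimensional stable bundle. Then $TM_p^N|_{E^{\rm s}}$ has two eigenvalues of equal modulus $\asymp 8^{-N/2}$. So $q$ has a double stable exponent near $-\tfrac32\log 2$ for every large $N$, far from $-\log 2$ and $-2\log 2$.

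Your later Franks perturbation produces simplicity, not closeness. Conversely, closeness with $\varepsilon$ below half the gaps of $L(p)$ would give simplicity for free. The missing idea is to put the transition in general position. One way is a Franks-type perturbation of the derivative along the excursion, away from $\orb(p)$ and from the homoclinic points used; this is what the transitions of \cite{BonDiaPuj:03} provide. Since general position is an open condition, this fits your Baire scheme.

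That scheme also needs a correction. ``$\orb(q_g)$ is $\varepsilon$-dense in $H(p_g)$'' is not an open condition in $g$, because $g\mapsto H(p_g)$ is only lower semicontinuous. You have to argue at its continuity points (Item~\ref{generic:homorelated}).

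\paragraph{Secondary issue: approximating orbits need not lie in $H$.} In Item~\ref{generic:lyapunovvector} you assert that, since $H$ is a chain recurrence class, the approximating orbits can be taken in $H$. This does not follow. Orbits that are Hausdorff-close to $\supp(\mu)\subset H$ may belong to other classes accumulating on $H$. In the paper it is isolation that excludes this (Lemma~\ref{lProp:approximation-invariant-measure}, Remark~\ref{r.equality}).

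Item~\ref{generic:lyapunovvector} as stated does not need this. Your passage from $\mathcal{L}_{\rm per}(H)$ to $\mathcal{L}(H)$ in Item~\ref{generic:compact-convex} does need it, so for nonisolated classes that step is not established. The paper simply cites \cite[Corollary~2]{Abdetal:07} there, which is a statement about periodic spectra. Your stated fallback is the part that is actually supported.

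Finally, gluing ergodic components for a nonergodic $\mu$ in Item~\ref{generic:lyapunovvector} runs into the same general-position issue as above. The paper covers that item by citing \cite[Theorem~3.5(a)]{AbdBonCro:11} together with \cite[Corollary~1.7]{BocBon:12}.
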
 

The references for the assertions in Theorem~\ref{Thm:generic-properties} are the following:
Item~\ref{generic:KS} is  the classical Kupka-Smale Theorem~\cite{Kup:63,Sma:63}.
Item~\ref{saddleswithsimplespectrum} is formulated in \cite[Proposition 2.3]{Abdetal:07}.
Item~\ref{generic:compact-convex} is taken from~\cite[Corollary~2]{Abdetal:07}.
Item~\ref{generic:new-periodic-orbit} is taken from~\cite[Lemma~2.5]{Cheetal:19}.
In particular, in Item~\ref{generic:new-periodic-orbit}, the periodic measure $\mu_q$ can be chosen arbitrarily close to $\mu_p$ by taking $\kappa$ sufficiently close to $1$ and $\gamma$ sufficiently close to $0$.
Item~\ref{generic:homorelated} follows from \cite{CarMorPac:03}.
Item \ref{generic:lyapunovvector} follows from ~\cite[Theorem 3.5 (a)]{AbdBonCro:11} together with~\cite[Corollary 1.7]{BocBon:12}. Let us observe that \cite[Theorem 3.8]{AbdBonCro:11} formulates this item for ergodic measures.
Indeed, Item  \ref{generic:lyapunovvector} admits the following stronger version for isolated homoclinic classes.

 Note that 
for an isolated homoclinic class $H$, every
sequence of periodic measures $\{\mu_{q_n}\}$ that converges to some measure $\mu \in
\mathcal{M}_{\rm inv}(f)$ must satisfy that $\orb(q_n) \subset H$ for every large $n$.

\begin{lemma}\label{lProp:approximation-invariant-measure}
	Let $H$ be an isolated  nontrivial homoclinic class of $f\in \mathcal{R}_0$ and 
	 $\mu\in\mathcal{M}_{\rm inv}(H)$. Then there exists a sequence of periodic orbits $\{\orb(\bar p_n)\}$ contained in $H$ such that 
	\[
	\mu_{\bar p_n}
	\overset{\text{weak}\ast}{\longrightarrow} \mu \qquad \text{and} \qquad  L(\bar p_n)\rightarrow L(\mu) \quad \text{as $n\rightarrow\infty$.}
	\]
Moreover, the saddles $\bar  p_n$ can be chosen with a simple spectrum.
\end{lemma}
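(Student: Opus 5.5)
The plan is to start from Item~\ref{generic:lyapunovvector} of Theorem~\ref{Thm:generic-properties} and then use isolation to place the approximating orbits inside $H$. Simplicity of the spectrum will be obtained afterwards by a density argument inside $H$ that follows the periodic orbits closely.

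Fix $\mu\in\mathcal{M}_{\rm inv}(H)$. Item~\ref{generic:lyapunovvector} gives saddles $\bar p_n'$ with $\mu_{\bar p_n'}\to\mu$, $L(\bar p_n')\to L(\mu)$, and $\orb(\bar p_n')\to\supp(\mu)\subset H$ in the Hausdorff sense.

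\emph{Localization.} Let $U$ be an isolating neighborhood of $H$. By Hausdorff convergence, $\orb(\bar p_n')\subset U$ for all large $n$. Each such orbit is compact and invariant, so it lies in $\bigcap_{i\in\mathbb Z}f^i(U)=H$; this is the remark preceding the lemma. After discarding finitely many terms we obtain $\orb(\bar p_n')\subset H$, which already proves the first assertion.

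\emph{Simple spectrum.} Each $\bar p_n'$ is a saddle in the nontrivial class $H$. I want a saddle $\bar p_n\in H$ with simple spectrum satisfying
\[
d(\mu_{\bar p_n},\mu_{\bar p_n'})<1/n \quad\text{and}\quad |L(\bar p_n)-L(\bar p_n')|<1/n .
\]
A diagonal argument then finishes the proof. There are two routes.
\begin{itemize}
\item The first route uses Item~\ref{generic:new-periodic-orbit}. It needs a simple-spectrum saddle to start from, so it does not apply directly to $\bar p_n'$.
\item The preferred route goes back to the source of Item~\ref{saddleswithsimplespectrum}, namely \cite[Proposition~2.3]{Abdetal:07}. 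Its proof is a Franks-lemma perturbation plus genericity argument. It shows that for $C^1$-generic $f$ every saddle $p$ of $H$ is accumulated by saddles $q$ homoclinically related to $p$ with simple spectrum, such that $\mu_q$ is close to $\mu_p$ and $L(q)$ is close to $L(p)$. These $q$ are obtained by shadowing $\orb(p)$ for most of their period, with a short excursion along a homoclinic orbit where a perturbation splits the eigenvalues.
\end{itemize}
If that stronger form is not available as stated, I would add it as a generic property to $\mathcal R_0$, with the usual countable-basis semicontinuity argument. The scheme is as follows: take a perturbation that makes a periodic orbit close to $\orb(p)$, both weak$^\ast$ and in spectrum, have simple spectrum; this is an open property, so it becomes generic.

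\emph{Main obstacle.} The delicate point is this last step: making sure the simple-spectrum approximants stay homoclinically related to the saddle, and hence inside $H$, while keeping both the measure and the exponents close. Once the approximants are homoclinically related to $\bar p_n'$, they belong to $H$ automatically, and isolation is no longer needed for them.
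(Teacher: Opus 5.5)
Your plan is essentially the paper's proof. You take the approximating saddles from Item~\ref{generic:lyapunovvector}, use isolation to get $\orb(\bar p_n)\subset H$ for large $n$, and obtain the simple-spectrum refinement from an external perturbative genericity result; the paper simply quotes \cite[Lemma~4.9]{AbdBonCro:11} for that last step instead of a strengthened \cite[Proposition~2.3]{Abdetal:07} or a new generic property. One caution: for non-ergodic $\mu$ you should not rely on Hausdorff convergence of the orbits to $\supp(\mu)$. For $\mu=\frac12(\mu_p+\mu_q)$ with $\orb(p)\neq\orb(q)$, no periodic orbit can stay uniformly close to $\orb(p)\cup\orb(q)$ while visiting both. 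Weak$^\ast$ convergence suffices, as in the remark preceding the lemma: any Hausdorff limit of the orbits is chain transitive and meets $H$, hence lies in the chain recurrence class $H$ (Item~\ref{generic:homorelated}), so the orbits eventually lie in the isolating neighborhood and thus in $H$.
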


\begin{proof}
By Item \ref{generic:lyapunovvector} of Theorem~\ref{Thm:generic-properties}, there is a sequence of periodic orbits $\left\{\orb(\bar p_n)\right\}$ contained in $H$ such that $\{\mu_{\bar p_n}\}$ converges to $\mu$ and $L(\mu_{\bar p_n}) \to L(\mu)$.
Since $H$ is isolated, by the comment above, there is $n_0$ such that $\bar p_n \in H$ for all $n\ge n_0$. 

The fact that the measures can be chosen with a simple spectrum is a consequence of \cite[Lemma 4.9]{AbdBonCro:11}.
	 \end{proof}

\begin{remark}
\label{r.equality}
Item \ref{generic:lyapunovvector} in Theorem~\ref{Thm:generic-properties}
implies
the equality
$\mathcal{L}(H) = \mathcal{L}_{\mathrm{per}}(H)$ 
in \eqref{e.samedef} for every $f\in\mathcal{R}_0$ and every isolated  homoclinic class $H$.
\end{remark}

\subsubsection{Transitions in homoclinic classes}
We need to refine the residual set $\mathcal{R}_0$ in Theorem~\ref{Thm:generic-properties} to consider the existence of
periodic orbits in the class ``approximating''  simultaneously a pair of saddles. The ``difficult'' 
case occurs when
these saddles 
have different $\mathrm{s}$-indices. The results below
are
 motivated by the definition of a {\em{periodic linear system with transitions}} in
\cite[Definition 1.6]{BonDiaPuj:03}
 and its variation in \cite{Bonetal:03} for saddles having consecutive 
$\mathrm{s}$-indices, see Proposition~\ref{Prop:new-periodic-point} below.

We start considering saddles with the same $\mathrm{s}$-index.
We say that a $\mathrm{s}$-index $k$ is {\em{admissible for a homoclinic class}} if it contains saddles
of $\mathrm{s}$-index $k$. With these ingredients, 
we can reformulate \cite[Lemma 1.9]{BonDiaPuj:03}  for generic diffeomorphisms as follows:

\begin{lemma} \label{l.periodiclinearsystem}
Let $\mathcal{R}_0$ be the residual set in Theorem~\ref{Thm:generic-properties}.
Then for every $f\in \mathcal{R}_0$, every nontrivial homoclinic class of $f$, 
and every admissible $\mathrm{s}$-index $k$ of $H$, the derivative $Df$ induces a periodic linear system
with transitions on the set of
saddles of $H$ of  $\mathrm{s}$-index $k$.
\end{lemma}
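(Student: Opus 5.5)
The proof amounts to checking that the set $\Sigma_k$ of saddles of $H$ with $\mathrm{s}$-index $k$ satisfies the hypotheses of \cite[Lemma~1.9]{BonDiaPuj:03}. That lemma asserts the following. If $\Sigma$ is a set of hyperbolic periodic points of the same index that are pairwise homoclinically related and contained in a common homoclinic class, then the derivative $Df$ along $\Sigma$ is a periodic linear system with transitions in the sense of \cite[Definition~1.6]{BonDiaPuj:03}.

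Fix $f\in\mathcal{R}_0$, a nontrivial homoclinic class $H$, and an admissible index $k$. By admissibility, $\Sigma_k\neq\emptyset$. Item~\ref{generic:KS} of Theorem~\ref{Thm:generic-properties} gives that every periodic point is hyperbolic, so each point of $\Sigma_k$ is a saddle with a well-defined splitting $E^{\mathrm s}\oplus E^{\mathrm u}$ with $\dim E^{\mathrm s}=k$. Item~\ref{generic:homorelated} gives that any two saddles of $H$ with the same index are homoclinically related. Hence $\Sigma_k$ is a single homoclinic-relation class contained in $H$.

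We then construct the transitions as in \cite{BonDiaPuj:03}. Take $p,q\in\Sigma_k$ and transverse heteroclinic points $x\in W^{\mathrm u}(\orb(p))\pitchfork W^{\mathrm s}(\orb(q))$ and $y\in W^{\mathrm u}(\orb(q))\pitchfork W^{\mathrm s}(\orb(p))$. These orbits, together with $\orb(p)$ and $\orb(q)$, form a hyperbolic set (a horseshoe), whose periodic orbits shadow the itineraries $(p^{n}\, x\, q^{m}\, y)$. The derivative along the finite pieces of the heteroclinic orbits, conjugated to the fixed charts at $p$ and $q$, defines the transition matrices. The hyperbolicity of the horseshoe then provides the periodic points in $\Sigma_k$ whose derivative is, up to arbitrarily small error, the concatenation of the powers of $Df^{\pi(p)}$, the transitions, and $Df^{\pi(q)}$. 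These are exactly the axioms of \cite[Definition~1.6]{BonDiaPuj:03}. The new periodic points have index $k$ and lie in $H$, so the system is indeed defined on $\Sigma_k$ itself.

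The main obstacle is the bookkeeping. Definition~1.6 requires the approximating periodic points to stay inside the chosen set $\Sigma_k$ and the errors to be uniformly small. Both follow from the transversality and hyperbolicity above, since the shadowing orbits are homoclinically related to $p$ and have the same index. With that checked, the statement is a direct reformulation of \cite[Lemma~1.9]{BonDiaPuj:03}, and no additional genericity beyond $\mathcal{R}_0$ is needed.
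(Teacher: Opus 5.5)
Your proposal is correct and follows the paper's route. The paper gives no separate proof: it presents the lemma as a direct reformulation of \cite[Lemma~1.9]{BonDiaPuj:03}, once Item~\ref{generic:homorelated} of Theorem~\ref{Thm:generic-properties} identifies the saddles of $H$ of $\mathrm{s}$-index $k$ with a single homoclinic-relation class, and your horseshoe sketch just unpacks the proof in \cite{BonDiaPuj:03}. The one step worth stating explicitly is that every saddle homoclinically related to some $p\in\Sigma_k$ lies in $H$: the homoclinic class of $p$ and $H$ are both chain recurrence classes containing $p$ (by the same item), hence equal, and this is what keeps your shadowing periodic points inside $\Sigma_k$.
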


We refrain from giving a precise definition of a periodic linear system with transitions. In Remark~\ref{r.periodiclineartranb},
we reformulate Lemma~\ref{l.periodiclinearsystem} in a more convenient way.

To consider saddles with different $\mathrm{s}$-indices, a preliminary 
step is to adapt Lemma~\ref{l.periodiclinearsystem} for saddles with a heterodimensional cycle (which is not a
generic condition).
Recall that two saddles $p,q$ of $f$ have a {\em{heterodimensional cycle}} if their invariant manifolds 
of their orbits
intersects cyclically:
$$
W^{\mathrm{s}}(\orb(p),f) \cap W^{\mathrm{u}}(\orb (q),f)\ne\emptyset\ne 
 W^{\mathrm{u}}(\orb(p),f) \cap W^{\mathrm{s}}(\orb (q),f).
 $$ 
  This concept will play an important role in our arguments.

In the sequel, we use the following notation for derivatives of periodic points $p$,
\[
M_p\eqdef D_pf^{\pi(p)}\colon T_pM\rightarrow T_pM. 
\]

We say that a pair of dominated splittings of two saddles $p$ and $q$
are {\em{compatible}} if they have the same number of bundles
	$$
	T_\ast M=E^1_\ast\oplus\cdots\oplus E^k_\ast, \qquad \ast \in \{\orb(p),\orb(q)\}
	$$
and moreover, it holds
	$$
	\dim(E^j_p)=\dim(E^j_q), \qquad \text{for every $j=1,\dots,k$}.
	$$
	 
In what follows, given a set $X$ and $\epsilon>0$, we let $B_{\epsilon}(X)$ the set of points at distance less than $\epsilon$ from $X$. 

\begin{proposition}[Theorem 3.1 in \cite{Bonetal:03}] \label{Prop:new-periodic-point}
Consider $f\in \diff^1(M)$ with a heterodimensional cycle associated with saddles $p=p_f$ and $q=q_f$
of $\mathrm{s}$-indices $i+1$ and $i$, respectively. 
Assume that $M_p$ and $M_q$ have compatible dominated splittings
	$$
	T_\ast M=E^1_\ast\oplus\cdots\oplus E^k_\ast, \quad \ast \in \{\orb(p),\orb(q)\}
	$$
such that	there is $r\in \{2, \dots, k-1\}$ such that $E^r$ has dimension one,
$E^1_p\oplus\cdots\oplus E_p^r= E^{\mathrm{s}}_p$, and $E_q^r \oplus \cdots E_q^k= E^{\mathrm{u}}_q$.

Then for every $\varepsilon, \gamma>0$  there are
 linear maps 
	$$
	T_0\colon T_pM\rightarrow T_qM \qquad \mbox{and} \qquad T_1\colon T_qM\rightarrow T_pM
	$$
	and 
	positive integers $t_0$ and $t_1$, depending only on $T_0$ and $T_1$, 
	such that
	for every $\alpha,\beta\in \mathbb{N}$  there is 
	a diffeomorphism $g$ $\varepsilon$-close to $f$
	having a saddle\,\footnote{This heavy notation will be needed in the genericity argument in Section~\ref{sss.genericity}.
	Similarly for the neighborhoods $\mathcal{O}_{f,\mathbf{v}}$ in Remarks~\ref{r.coroprop} and \ref{r.periodiclineartranb}.}
	\begin{equation}
	\label{e.pg}
 \bar p_{g, \mathbf{v}}, \qquad \mathbf{v}= (p,q, \gamma, \alpha, \beta),
	\end{equation}
	as follows:
	\begin{enumerate}
		\item\label{Prop:new-periodic-pointitem1} 
		$\pi(\bar p_{g, \mathbf{v}})=\alpha \pi(p)+\beta \pi(q)+t_0+t_1$;
		\item\label{Prop:new-periodic-pointitem2} 
		$\#(\orb(\bar p_{g, \mathbf{v}})\cap B_{\gamma}(\orb (p))\geq \alpha \pi(p)$ and 
		$\#(\orb(\bar p_{g, \mathbf{v}})\cap B_{\gamma}(\orb (q))\geq \beta \pi(q)$;
		\item\label{Prop:new-periodic-pointitem3}  
		there is a family of linear maps 
		$\{I_i\}_{i=0}^{\alpha+\beta+2}$  which are $(1/\ell)$-close to the identity, 
		\[
		\begin{split}
		&I_j \colon T_p M \to T_p M, \qquad \mbox{$j=0, \dots, \alpha$ and $j=\alpha+\beta+2$},\\
		&I_k \colon T_q M \to T_q M, \qquad k=\alpha+1, \dots, \alpha+\beta+1,
		\end{split}
		\]
		such that
		$M_{\bar p_{g}}$ is conjugate to the product
		\[ 
		I_{\alpha+\beta+2}\circ T_1\circ I_{\alpha+\beta+1} \circ M_q \circ I_{\alpha+\beta} \circ \cdots\circ I_{\alpha+2}\circ M_q \circ I_{\alpha+1}\circ T_0\circ I_\alpha\circ M_p\circ I_{\alpha-1}\circ  \cdots
		\circ I_1\circ M_p\circ I_0.
		\]	
	\end{enumerate}
	Moreover, $M_{\bar p_{g, \mathbf{v}}}$ has a dominated splitting 	
	compatible with the ones of
	$M_p$ and $M_q$ whose bundles are close (for the elements of the orbits of
	$\bar p_{g, \mathbf{v}}$ close to $\orb (p)$ and $\orb (q)$) to the ones of $\orb (p)$ and $\orb (q)$.
\end{proposition}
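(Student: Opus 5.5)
This proposition is quoted from \cite[Theorem 3.1]{Bonetal:03}, so the plan is to reproduce the transition construction there, with the item-by-item bookkeeping adapted to the statement above. The model is the construction in \cite{BonDiaPuj:03}: perturb $f$ near the heterodimensional cycle so that a single periodic orbit spends $\alpha$ turns near $\orb(p)$, makes one transit to $\orb(q)$, spends $\beta$ turns near $\orb(q)$, and makes one transit back.

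\emph{Step 1 (transitions).} Fix heteroclinic points $x\in W^{\mathrm u}(\orb(p))\cap W^{\mathrm s}(\orb(q))$ and $y\in W^{\mathrm u}(\orb(q))\cap W^{\mathrm s}(\orb(p))$. Choose small neighbourhoods $U_p,U_q$ of the two orbits, of size less than $\gamma$. Let $t_0$ be the number of iterates an orbit segment through $x$ spends outside $U_p\cup U_q$, and let $T_0$ be the derivative $Df^{t_0}$ along this segment, read in linearizing charts. Define $t_1$ and $T_1$ in the same way using $y$. Both are fixed once the points and charts are fixed, which is why $t_0,t_1$ depend only on $T_0,T_1$.

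\emph{Step 2 (closing).} Use Franks-type perturbations supported in $U_p,U_q$ to linearize $f$ near the orbits; this produces the maps $I_j$ close to the identity. Then, for given $\alpha,\beta$, use a connecting/closing perturbation localized near $x$ and $y$ to build $g$ with a periodic point $\bar p_{g,\mathbf v}$ following this itinerary.

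The one-dimensional central bundle $E^r$ is the crucial hypothesis here. It is stable for $p$ and unstable for $q$. After the preliminary perturbation one arranges an orbit that leaves $U_p$ along a point close to $W^{\mathrm u}_{\rm loc}(p)$ and arrives near $W^{\mathrm s}_{\rm loc}(q)$. The defect in the central direction is one-dimensional, so it can be compensated by a $C^1$-small translation, as in the "unfolding a cycle" argument.

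The counting in items (1) and (2) then follows directly from the itinerary: $\alpha\pi(p)+\beta\pi(q)+t_0+t_1$ iterates in total, with the prescribed numbers inside the $\gamma$-neighbourhoods. Item (3) is the chain rule along the orbit, with the $I_j$ absorbing the nonlinearity and the small perturbation errors.

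\emph{Step 3 (domination).} To obtain the dominated splitting of $M_{\bar p_{g,\mathbf v}}$ compatible with those of $M_p$ and $M_q$, use cone fields. The splittings of $p$ and $q$ have equal dimensions, and the linearizing charts can be chosen so that $T_0$ and $T_1$ send the bundles $E^j$ into small cones around the corresponding $E^j$ (after a preliminary perturbation). The cones are then invariant under long products of $M_p$ and $M_q$. Since the $\alpha\pi(p)$ and $\beta\pi(q)$ blocks dominate the bounded transitions once $\alpha,\beta$ are large, the cone criterion gives a dominated splitting. For small $\alpha,\beta$, adjust $T_0,T_1$ instead.

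\emph{Main obstacle.} The hard step is making the closing in Step 2 uniform in $(\alpha,\beta)$ with a fixed $\varepsilon$-perturbation. This requires using the index difference of one and the one-dimensionality of $E^r$ so that the central coordinate can always be matched. I would follow the proof in \cite{Bonetal:03} for this point.
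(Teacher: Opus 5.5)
Your plan matches the paper's route. The paper gives no argument of its own: it quotes \cite[Theorem 3.1]{Bonetal:03}, which unfolds the co-index-one cycle along the one-dimensional central bundle $E^r$ with fixed transitions $T_0,T_1$, and only notes that the visit counts in Item (2) are implicit in that construction, which is how you read them off the itinerary. One point to tighten: since $T_0,T_1,t_0,t_1$ are fixed before $\alpha,\beta$, your ``for small $\alpha,\beta$, adjust $T_0,T_1$'' has to be a single choice made in advance, namely building enough iterates near $\orb(p)$ and $\orb(q)$ into the transitions; this same choice is what makes the required central translation and the domination estimates uniform in $\alpha,\beta$.
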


We observe that Proposition~\ref{Prop:new-periodic-point} is just \cite[Theorem 3.1]{Bonetal:03}, where we added a quantitative control of the visits to the neighborhoods of
 $\orb(p),\orb(q)$ in Item \ref{Prop:new-periodic-pointitem2} , which is implicit in the proof in \cite{Bonetal:03}.

\begin{remark}\label{r.thesaddlebarp}
By construction, the saddle $\bar p_{g,\mathbf{v}}$ in Proposition~\ref{Prop:new-periodic-point}
has well-defined strong stable and strong unstable manifolds which satisfy
\begin{equation}
\label{e.intersectionsstrong}
W^{\mathrm{uu}}(\orb (p_{g,\mathbf{v}})) \pitchfork W^{\mathrm{s}}(\orb(p_g)) \ne \emptyset,
\,\,
W^{\mathrm{ss}}(\orb (\bar p_{g,\mathbf{v}})) \pitchfork W^{\mathrm{u}}(\orb (q_g)) \ne \emptyset.
\end{equation}
This condition is implicit in the constructions in \cite{Bonetal:03} and is explicitly stated in
\cite[page 645, proof of Lemma 3.4]{WanZha:20}.

In particular, condition~\eqref{e.intersectionsstrong} implies that if $p_g$ and $q_g$ are in the same chain recurrence class, then the saddle $\bar p_{g,\mathbf{v}}$ also belongs to that class. Consequently, if the initial saddles $p_f$ and $q_f$ are locally generically in the same
homoclinic (or chain recurrence class), then $\bar p_{g,\mathbf{v}}$ is also in this class.
\end{remark}

Note also that the saddle $\bar p_{g, \mathbf{v}}$ has 
a continuation $\bar p_{h, \mathbf{v}}$ for diffeomorphisms $h$ close to $g$ satisfying Items \ref{Prop:new-periodic-pointitem1}--\ref{Prop:new-periodic-pointitem3} in Proposition~\ref{Prop:new-periodic-point}. This leads to the following:

\begin{remark}\label{r.coroprop}
Let $f\in \diff^1(M)$ have two saddles $p,q$ with a heterodimensional cycle as in Proposition~\ref{Prop:new-periodic-point}. Then 
for every $\gamma>0$ and  every $\alpha, \beta\in \mathbb{N}$ there is
an open
set $\mathcal{O}_{f, \mathbf{v}}$,
$\mathbf{v}=(p, q, \gamma, \alpha, \beta)$,
such that 
$f \in  \overline{\mathcal{O}_{f, \mathbf{v}}}$ 
and every $g\in \mathcal{O}_{f,\mathbf{v}}$ has
a saddle $\bar p_{g, \mathbf{v} }$ satisfying Items \ref{Prop:new-periodic-pointitem1}--\ref{Prop:new-periodic-pointitem3} of Proposition~\ref{Prop:new-periodic-point}.
\end{remark}

\begin{remark}[Same index version of Proposition~\ref{Prop:new-periodic-point}]
\label{r.periodiclineartranb}
The conclusions in Proposition~\ref{Prop:new-periodic-point} hold for  a diffeomorphism $f$ with saddles $p,q$ of the same 
$\mathrm{s}$-index  which are homoclinically related. In this case, it is enough to consider the corresponding
bundles $E^\mathrm{s}$ and $E^\mathrm{u}$ that have the same dimensions 
and no compatibility is required.
This fact is indeed a reformulation of Lemma~\ref{l.periodiclinearsystem}.
	As in Remark~\ref{r.coroprop}, we get open
sets $\mathcal{O}_{f,   \mathbf{v} }$, $\mathbf{v}=(p, q, \gamma, \alpha, \beta)$, with $f \in  \overline{\mathcal{O}_{f,  \mathbf{v} }}$, 
whose maps
$g$ have
 saddles $\bar p_{g,  \mathbf{v} }$ satisfying Items \ref{Prop:new-periodic-pointitem1}--\ref{Prop:new-periodic-pointitem3} of Proposition~\ref{Prop:new-periodic-point}.
	\end{remark}

\begin{remark}\label{r.whenHisisolated}
When $f \in \mathcal{R}_0$ and the saddles $p_f$ and $q_f$ belong to a robustly isolated homoclinic class $H$, then $\bar{p}_{g, \mathbf{v}}$ belongs to the  homoclinic class of these saddles. 
\end{remark}

\subsubsection{The residual set $\mathcal{R}$ of $\diff^1(M)$}\label{sss.genericity}
To define the residual set $\mathcal{R}$ in our results, we need a final ingredient:

\begin{lemma}[A consequence of the connecting lemma, \cite{Hay:97,WenXia:00}]
\label{l.hayashi}
Let $f\in \mathcal{R}_0$, 
$p_f,q_f$  be saddles of $f$ with different $\mathrm{s}$-indices, and 
 $\mathcal{V}_f$ a neighborhood of $f$
 such that the continuations of $p_f, q_f$ are defined 
 and  the
 homoclinic classes of $p_g,q_g$ are equal for every $g\in \mathcal{V}_f \cap \mathcal{R}_0$.
 Let $\mathcal{D}_f$ be the subset of $\mathcal{V}_f$ of diffeomorphisms  $g$ with a heterodimensional 
cycle associated to the
 $p_g$ and $q_g$. Then  $\mathcal{D}_f$ is dense in $\mathcal{V}_f$.
\end{lemma}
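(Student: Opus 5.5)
The plan is to derive Lemma~\ref{l.hayashi} from Hayashi's connecting lemma combined with the genericity properties of $\mathcal{R}_0$ in Theorem~\ref{Thm:generic-properties}. The argument has three steps.

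\textbf{Step 1 (reduction to generic maps).} Fix $g\in\mathcal{V}_f$ and a neighborhood $\mathcal{U}\subset\mathcal{V}_f$ of $g$. We must find $h\in\mathcal{U}\cap\mathcal{D}_f$. Since $\mathcal{R}_0$ is residual, we may choose $g'\in\mathcal{U}\cap\mathcal{R}_0$. By hypothesis, $H(p_{g'},g')=H(q_{g'},g')$. By Item~\ref{generic:homorelated} of Theorem~\ref{Thm:generic-properties}, this class is a chain recurrence class.

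\textbf{Step 2 (producing the two intersections).} Since $p_{g'}$ and $q_{g'}$ lie in the same homoclinic class, there are points of $H$ accumulated both by $W^{\mathrm u}(\orb(p_{g'}))$ and by $W^{\mathrm s}(\orb(q_{g'}))$. Indeed, $H(p)$ is the closure of the transverse homoclinic points of $p$, so $W^{\mathrm u}(\orb(p))$ accumulates on every point of $H$, in particular on $q$. Symmetrically, $W^{\mathrm s}(\orb(q))$ accumulates on $p$.

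We then apply the connecting lemma of \cite{Hay:97,WenXia:00} in its standard form: if $W^{\mathrm u}(\orb(p))$ accumulates on a non-periodic point $z$ whose orbit is accumulated by $W^{\mathrm s}(\orb(q))$, a perturbation supported near finitely many iterates of $z$ creates an intersection $W^{\mathrm u}(\orb(p))\cap W^{\mathrm s}(\orb(q))\neq\emptyset$. For instance, take $z$ a transverse homoclinic point of $q$ close to $q$; it is non-periodic, $W^{\mathrm s}(\orb(q))$ contains it, and $W^{\mathrm u}(\orb(p))$ accumulates on it, because $H$ is the closure of $W^{\mathrm u}(\orb(p))\cap H$. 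This yields $g_1\in\mathcal{U}$ with $W^{\mathrm u}(p_{g_1})\cap W^{\mathrm s}(q_{g_1})\neq\emptyset$.

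Since the indices differ, this intersection may be non-transverse (when $\dim W^{\mathrm u}(p)+\dim W^{\mathrm s}(q)\le m$) or transverse. In the transverse case it is robust. In the non-transverse case it is fragile, and the second connection must be made by a perturbation supported away from an orbit segment of the first intersection.

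\textbf{Step 3 (the second connection).} To keep the first connection, we arrange the transverse direction first. Among the two required intersections, one, say $W^{\mathrm s}(p)\cap W^{\mathrm u}(q)$ when $\mathrm{ind}(p)>\mathrm{ind}(q)$, has dimensions adding to more than $m$. We create that one first and, after a further small perturbation, make it transverse; it then persists in a neighborhood. Next we perturb again, within this neighborhood, to obtain a map $g_2$ that is Kupka--Smale and in $\mathcal{R}_0$. For such a map both saddles still lie in the same homoclinic class, by the hypothesis on $\mathcal{V}_f\cap\mathcal{R}_0$. Applying the connecting lemma once more to create $W^{\mathrm u}(p)\cap W^{\mathrm s}(q)$ gives $h\in\mathcal{U}$ with a heterodimensional cycle. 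The perturbation in this last application is supported in small neighborhoods of finitely many iterates of a point $z$, and we choose $z$ off the compact transverse intersection orbit segment, so the robust intersection survives.

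\textbf{Main obstacle.} The delicate point is the accumulation hypothesis needed to apply the connecting lemma with support disjoint from the periodic orbits and from the already-created intersection. I would handle it as in \cite[Lemma~3.3]{DiaPujUre:99}: use the chain-recurrence-class characterization and the fact, valid for generic maps, that $W^{\mathrm u}(\orb(p))$ and $W^{\mathrm s}(\orb(q))$ are both dense in $H$.
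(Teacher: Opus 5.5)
Your proposal is correct and follows the paper's argument. You create first the intersection $W^{\mathrm s}(\orb(p))\cap W^{\mathrm u}(\orb(q))$, whose dimensions add up to more than $m$, make it transverse and hence robust, perturb back into $\mathcal{R}_0$ so that the two saddles again lie in the same homoclinic class, and apply the connecting lemma once more to get the other intersection. One small difference: keeping the support of the last perturbation away from the transverse intersection is not needed, since a transverse intersection survives any sufficiently $C^1$-small perturbation, and the paper relies on exactly this.
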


\begin{proof} We will proceed exactly as in \cite[Lemma 3.2]{DiaPujUre:99}. As the argument 
is short, we repeat it for completeness.
Assume that
the $\mathrm{s}$-index of $p_f$ is greater than the one of $q_f$. Take any $g \in \mathcal{R}_0\cap \mathcal{V}_f$. 
Since $p_g$ and $q_g$ are in the same transitive set
(a homoclinic class) a perturbation (provided by the connecting lemma in \cite{Hay:97, WenXia:00}) leads to $g_1$ such that $W^{\mathrm{s}}(p_{g_1},g_1)\cap W^{\mathrm{u}}(q_{g_1},g_1)\ne \emptyset$. 
Since, by hypotheses,  the sum of the dimensions of these manifolds is greater than $m$, 
after a perturbation, this intersection can be done transverse and hence robust, that is,
 $W^{\mathrm{s}}(p_{h},h)\pitchfork W^{\mathrm{u}}(q_h,h)\ne \emptyset$ for every $h$ in an open set whose closure contains $g_1$.
 After a new perturbation, we can assume that $h \in \mathcal{R}_0$ and hence
both saddles are in the same homoclinic class. Reversing now the roles of $p_h$ and $q_h$, we get $g_2$ close to 
$h$, hence to $g$,
such that
$W^{\mathrm{s}}(q_{g_2},g_2)\cap W^{\mathrm{u}}(p_{g_2},g_2)\ne \emptyset$. This provides a heterodimensional cycle and hence $g_2\in \mathcal{D}_f$. Since $g_2$ can be chosen arbitrarily close to
$g$, the result follows.
\end{proof}

We are now ready to construct the residual set $\mathcal{R}$. For each $n\in \mathbb{N}$, denote by 
$\mathrm{Per}_n (f)$ the set of periodic points of $f$
with period at most $n$. Let
$$
\mathcal{A}_n \eqdef \{ f\in \diff^1(M) \colon \mbox{every $p \in \mathrm{Per}_n (f)$ is hyperbolic}\}.
$$
The set $\mathcal{A}_n$ is open and dense in $\diff^1 (M)$ (indeed, this is part of the proof of the Kupka-Smale genericity theorem). 

For every
$f\in \mathcal{A}_n$, the set $\mathrm{Per}_n(f)$ is finite. List $p_{1,f}, \dots, p_{r,f}$ the saddles in $\mathrm{Per}_n (f)$ in such a way
their $\mathrm{s}$-indices are not decreasing. Consider the family $\mathcal{I}_{f,n}$ of pairs $(i,j)$ with
$1 \le i<j\le r$ such that the indices
of $p_{i,f}$ and $p_{j,f}$ are equal or differ by one. 
For each $(i,j)\in \mathcal{I}_{f,n}$ there is an open neighborhood $\mathcal{V}_{f,(i,j)}$
where the continuations of $p_{i,f}$ and $p_{j,f}$ are well defined and satisfy Item \ref{generic:homorelated} in
Theorem~\ref{Thm:generic-properties}. Consider the open neighborhood of $f$ given by
$$
\mathcal{V}_{f,n} \eqdef \bigcap_{(i,j)\in \mathcal{I}_{f,n}} \mathcal{V}_{f,(i,j)}.
$$

Consider the subset of $\mathcal{C}_{f,n}$ of $\mathcal{I}_{f,n}$ consisting of the pairs $(i,j)$ such that
 $p_{i,g}$ and $p_{j,g}$ have different (and hence consecutive) $\mathrm{s}$-indices and are in the same homoclinic class for every $g \in  \mathcal{V}_{f,n}\cap \mathcal{R}_0$. Let $\mathcal{D}_{f,(i,j)}$ be the dense subset  of $\mathcal{V}_{f,n}$ provided by Lemma~\ref{l.hayashi} applied
to $\mathcal{R}_0$, $\mathcal{V}_{f,n}$, and the saddles $p_{i,f}, p_{j,f}$. 
Select now $\ell, \alpha, \beta \in \mathbb{N}$.
In what follows, for notational simplicity, we identify the $5$-tuples
$$
(i, j, \frac1\ell, \alpha, \beta) \quad \mbox{and} \quad (p_{i,g}, p_{j,g}, \frac1\ell, \alpha, \beta).
$$
Applying Remark~\ref{r.coroprop}
 to $g\in  \mathcal{D}_{f,(i,j)}$ and their saddles $p_{i,g}, p_{j,g}$
with quantifiers $\gamma=\frac1\ell, \alpha, \beta$,
we get
an open set
$$
\mathcal{O}_{g, \mathbf{v}} \quad \mbox{with} \quad
g \in \overline{\mathcal{O}_{g,\mathbf{v}}}, \quad \mbox{where} \quad \mathbf{v}=(i, j, \frac1\ell, \alpha, \beta)
\equiv  (p_{i,g}, p_{j,g}, \frac1\ell, \alpha, \beta),
$$
consisting of diffeomorphisms $h$
having a saddle $\bar p_{h, \mathbf{v}}$ as in \eqref{e.pg}
 satisfying conditions (1)-(3) relative to
$p_{i,h}$ and $p_{j,h}$ for the quantifiers $1/\ell, \alpha, \beta$.
Write
$$
 \mathbf{v}=(i, j, \frac1\ell, \alpha, \beta)=(i,j, \mathbf{w}), \quad
\mbox{where} \quad 
 \mathbf{w}=(\frac1\ell, \alpha, \beta).
 $$
 Let
$$
\mathcal{W}_{f, n, \mathbf{w}} \eqdef \bigcap_{(i,j) \in \mathcal{C}_{f,n}} \mathcal{W}_{f,i,j,\mathbf{w}}, \quad
\mbox{where} \quad
\mathcal{W}_{f, i,j, \mathbf{w}} \eqdef \bigcup_{g \in \mathcal{D}_{(i,j),f}}  \mathcal{O}_{g, i,j ,\mathbf{w}}.
$$
By construction,  the set $\mathcal{W}_{f, n,\mathbf{w}}$  is open and dense in $\mathcal{V}_{f,n}$.
Consider the set
$$
\mathcal{V}_{\mathcal{C}, n, \mathbf{w}} \eqdef \bigcup_{f\in \mathcal{R}_0} \mathcal{W}_{f, n, \mathbf{w}},
$$
which is open and dense in $\diff^1(M)$.

We now consider the subset of $\mathcal{H}_{f,n}$ of $\mathcal{I}_{f,n}$ consisting of the pairs $(i,j)$ such that
 $p_{i,g}$ and $p_{j,g}$ have the same  $\mathrm{s}$-index and are in the same homoclinic class for every $g \in  \mathcal{V}_{f,n}\cap \mathcal{R}_0$. Arguing as above, now considering Remark~\ref{r.periodiclineartranb}
 and the corresponding open sets and notation, we define
 $$
\mathcal{U}_{f, n, \mathbf{w}} \eqdef \bigcap_{(i,j) \in \mathcal{H}_{f,n}} \mathcal{U}_{f,i,j,\mathbf{w}}, \quad
\mbox{where} \quad
\mathcal{U}_{f, i,j, \mathbf{w}} \eqdef \bigcup_{g \in \mathcal{R}_0}  \mathcal{O}_{g, i,j \mathbf{w}}.
$$
By construction,  the set $\mathcal{U}_{f, n, \mathbf{w}}$  is open and dense in $\mathcal{V}_{f,n}$.
Consider the set
$$
\mathcal{V}_{\mathcal{H}, n, \mathbf{w}} \eqdef \bigcup_{f\in \mathcal{R}_0} \mathcal{U}_{f, n, \mathbf{w} },
$$ 
which is open and dense in $\diff^1(M)$.

Therefore, by construction, letting $\overline{\mathbf{w}}=(\ell, \alpha, \beta)\in \mathbb{N}^3$
for $\mathbf{w} =(1/\ell, \alpha, \beta)$,
the set
\begin{equation}
\label{e.theserR1}
\mathcal{R}_1 \eqdef 
 \bigcap_{ \ast \in \{\mathcal{C}, \mathcal{H}\}, \, n \in \mathbb{N}, \, \overline{\mathbf{w}} \in \mathbb{N}^3} \,\, \mathcal{V}_{\ast, n, \mathbf{w}} 
\end{equation}
is residual in $\diff^1(M)$.
We finally let, 
\begin{equation}
\label{e.thesetR}
\mathcal{R}\eqdef  \mathcal{R}_0 \cap \mathcal{R}_1, \qquad \mbox{$\mathcal{R}_0$ as in Theorem \ref{Thm:generic-properties} and $\mathcal{R}_1$ as in \eqref{e.theserR1}.}
\end{equation}
which is by definition a residual set of $\diff^1(M)$.

\begin{remark}[Key property of $\mathcal{R}$]
\label{r.keyresidual}
Given any $f\in \mathcal{R}$, any pair of saddles $p_f,q_f$
in the same homoclinic class 
and having the same or consecutive $\mathrm{s}$-indices, and natural numbers $\ell, \alpha, \beta$, 
there is a saddle 
$$
\bar p_{f, \mathbf{v}}, \qquad
\mathbf{v}=p_f,q_f,1/\ell, \alpha, \beta,
$$
as in 
\eqref{e.pg} satisfying Items \ref{Prop:new-periodic-pointitem1}--\ref{Prop:new-periodic-pointitem3} of Proposition ~\ref{Prop:new-periodic-point}
relative to $\ell, \alpha, \beta$ and the pair of saddles $p_f, q_f$. Moreover, if $p_f$ and $q_f$ are in the same chain recurrence class
the saddle $\bar p_{f, \mathbf{v}}$ is also in this class, see Remark~\ref{r.thesaddlebarp}.
\end{remark}

\section{Proof of Theorem~\ref{Thm:Lyapunov-spectrum}}
\label{s.proofofThm:Lyapunov-spectrum}

In this section, we prove Theorem~\ref{Thm:Lyapunov-spectrum} that is derived from 
the approximation Theorem~\ref{Thm:construction-sequences}, where a GIKN sequence of measures with good
properties is constructed. 
The proof of this result is given in Section~\ref{ss.construction-sequences}. 
The proof of Theorem~\ref{Thm:Lyapunov-spectrum} is completed in Section~\ref{ss.endThm:Lyapunov-spectrum}.

In what follows,  we use the box metric on $\mathbb{R}^m$: the box distance between two vectors
$u=(u_1,\dots,u_m), w=(w_1,\dots,w_m)$, 
is
\[
d(u,w)\eqdef \|u-w\| \eqdef \max_{1\leq i\leq m}\big\{|u_i-w_i|\big\}.
\]

\begin{theorem}\label{Thm:construction-sequences}
	Let $\mathcal{R}$ be the residual subset of $\diff^1(M)$ in \eqref{e.thesetR}.
	Then for every $f\in\mathcal{R}$, every 
	nontrivial homoclinic class $H$ of $f$, and every $L\in \interior (\mathcal{L}_{\mathrm{per}}(H))$
	there exist $\delta\in (0,1)$  and sequences of periodic points $\{p_n\}_{n\in \mathbb{N}_0}$ in $H$,
	of  constants $\left\{\gamma_n\right\}_{n\in \mathbb{N}_0}$, and of positive integers $\left\{k_n\right\}_{n\in \mathbb{N}_0}$,
	with  
	$$
	0<\gamma_{n+1}<\frac{\gamma_n}{2}<\frac{1}{2} \qquad \text{and} \qquad k_{n+1} > k_n,
	$$
such that
	for every $n\in\mathbb{N}$ the following holds:
	\begin{enumerate}
	
		\item\label{item:densness} $\orb(p_n)$ has simple spectrum and is $\frac{\delta}{2^n}$-dense in $H$;
		
		\item\label{item:good-approximation} 
		$\orb(p_{n+1})$ is a $(\gamma_n,1-\frac{1}{2^n})$-good approximation of $\orb(p_n)$;
		
		\item\label{item:Lyapunov-spectrum} let $L=(\lambda_1,\dots,\lambda_m)$, then  
		\[
		\lambda_i-\frac{\delta}{2^n}<\lambda_i(p_n)<\lambda_i+\frac{\delta}{2^{n}} \quad \text{for every $i=1,\dots,m$,}\] 
		and thus $\left\|L(p_n)-L\right\|<\frac{\delta}{2^n}$;
		
		\item\label{item:exterior-product} 
		for every $n\in\mathbb{N}$ and every $x\in B_{2\gamma_n}(\orb(p_n))$, it holds
		\[\left|L_i^{(k_n)}(x,f)-L_{i-1}^{(k_n)}(x,f)-\lambda_i(p_n)\right|<\frac{\delta}{2^n}
		\qquad \text{for every $i=1,\dots,k$.}
		\]

	\end{enumerate}
\end{theorem}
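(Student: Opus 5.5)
We construct the sequences inductively, using interiority of $L$ to keep enough room at each step so that the target can be approached from all sides. Fix $L\in\interior(\mathcal L_{\rm per}(H))$ and choose $\delta\in(0,1)$ with $B_{4\delta}(L)\subset \mathcal L_{\rm per}(H)$ (box metric).

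\textbf{Step $n=0$.} By definition of $\mathcal L_{\rm per}(H)$, Item~\ref{saddleswithsimplespectrum} of Theorem~\ref{Thm:generic-properties}, and Lemma~\ref{l.periodiclinearsystem}, we pick a saddle $p_0\in H$ with simple spectrum and $\|L(p_0)-L\|<\delta$. We then apply Item~\ref{generic:new-periodic-orbit} to make its orbit $\delta$-dense without moving the spectrum much. Next we choose $\gamma_0<1/2$ and $k_0$. Because $p_0$ has simple spectrum, $L_i^{(k)}(p_0,f)-L_{i-1}^{(k)}(p_0,f)\to\lambda_i(p_0)$ along multiples of the period. By Remark~\ref{r.Lkiscont} (continuity in $x$ for fixed $k$), Item~\eqref{item:exterior-product} then holds on $B_{2\gamma_0}(\orb(p_0))$ once $\gamma_0$ is small.

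\textbf{Inductive step.} Given $p_n,\gamma_n,k_n$, we want $p_{n+1}$ that is a $(\gamma_n,1-2^{-n})$-good approximation of $p_n$, is $\delta 2^{-n-1}$-dense, and has spectrum within $\delta2^{-n-1}$ of $L$. Item~\ref{generic:new-periodic-orbit} alone gives density and good approximation, but its spectrum is only close to $L(p_n)$. That error is up to $\delta2^{-n}$, so the spectrum must be actively corrected toward $L$. This correction is the main obstacle. Our plan is to use convexity and interiority.

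\emph{Corrector orbits.} Write $L=\sum_j t_j v_j$ as a convex combination of vectors $v_j$ near the vertices of a small simplex around $L$, contained in $B_{2\delta}(L)$, chosen so that $L(p_n)$ is pushed back. Each $v_j$ is approximately realized by a saddle $q_j\in H$; we use Item~\ref{generic:compact-convex} and Lemma~\ref{l.periodiclinearsystem}. For concatenation we use Remark~\ref{r.keyresidual}, which is available since $f\in\mathcal R$.

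\emph{Concatenation.} We glue a long stretch of $\orb(p_n)$ ($\alpha$ turns) with a small proportion of corrector orbits. When $q_j$ has a different index from $p_n$, we chain through saddles of consecutive indices. Item~\eqref{Prop:new-periodic-pointitem3} expresses $M_{\bar p}$ as a product of $M_{p_n}$ and $M_{q_j}$ with near-identity perturbations and bounded transitions. The dominated splitting compatibility then shows that the exponents of $\bar p$ are approximately the time-weighted averages of those of the pieces. Hence, choosing the proportions (weight $\ge 1-2^{-n-1}$ on $p_n$), we get $\|L(\bar p)-L\|<\delta2^{-n-1}$.

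Where the pieces are not dominated and averaging of exponents may fail, we fall back on controlling the exterior-power sums $L_i$. These are subadditive and behave well under concatenation up to bounded errors, so the partial sums $\sum_{r\le i}\lambda_r$ are averaged, and individual exponents then follow by differencing. Item~\eqref{Prop:new-periodic-pointitem2} gives the good-approximation ratio $\alpha\pi(p_n)/\pi(\bar p)>1-2^{-n}$.

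\emph{Finishing the step.} We then apply Item~\ref{generic:new-periodic-orbit} to $\bar p$ with tiny parameters. This yields simple spectrum and $\delta2^{-n-1}$-density while preserving good approximation, with the $\kappa$ and $\gamma$ losses absorbed. The result is $p_{n+1}$. Finally we choose $\gamma_{n+1}<\gamma_n/2$ and $k_{n+1}>k_n$ exactly as in the base case, which gives Item~\eqref{item:exterior-product}.
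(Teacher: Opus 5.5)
Your architecture is the paper's. You glue $\alpha$ turns of $\orb(p_n)$ to an auxiliary orbit via the transition property of $\mathcal R$ (Remark~\ref{r.keyresidual}). You read off the exponents of the glued saddle as time averages, using the compatible dominated splitting into one-dimensional bundles. Then you choose $k_{n+1},\gamma_{n+1}$ by continuity. Getting density and simple spectrum from a last application of Item~\ref{generic:new-periodic-orbit} to $\bar p$ is a legitimate variant of the paper's choice of a $\frac{\delta}{4^{n+1}}$-dense auxiliary orbit. It works because good approximations compose: a $(\gamma'',\kappa'')$-good approximation of a $(\gamma',\kappa')$-good approximation of $\orb(p_n)$ is a $(\gamma'+\gamma'',\kappa'\kappa'')$-good approximation of $\orb(p_n)$.

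\textbf{The gap: the correction step.} This step is the actual content of the induction, and as written it does not work.
\begin{itemize}
\item The good-approximation requirement forces the corrector weight $s$ to be less than $2^{-n}$. So the glued spectrum is approximately $(1-s)L(p_n)+s\,w$, with $w$ the time-weighted average of the correctors' spectra.
\item If the correctors are weighted by a convex combination $L=\sum_j t_jv_j$, then $w\approx L$. The new error is then about $(1-s)\|L(p_n)-L\|$, which in general is not below $\delta 2^{-n-1}$.
\item Your phrase ``chosen so that $L(p_n)$ is pushed back'' points the right way, but it must be quantitative. $s(w-L)$ has to cancel a definite fraction of $(1-s)(L(p_n)-L)$. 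Hence $w-L$ must have size of order $\delta$ and, coordinate by coordinate, the sign opposite to that of $L(p_n)-L$.
\item So the corrector does not sit near the vertices of a small simplex around $L$. It sits near the boundary of $B_{2\delta}(L)$ and must be re-chosen at each step according to the signs of $\lambda_i(p_n)-\lambda_i$.
\end{itemize}
The paper does exactly this with a single saddle $q_n$ (Remark~\ref{rem:choiceofqn}). It takes $\lambda_i(q_n)\approx\lambda_i+2\delta$ if $\lambda_i(p_n)\le\lambda_i$, and $\lambda_i(q_n)\approx\lambda_i-2\delta$ otherwise. It then chooses $t=\frac{\alpha\pi(p_n)}{\alpha\pi(p_n)+\beta\pi(q_n)}\in(1-\varrho_n^+,1-\varrho_n^-)$, i.e.\ corrector weight about $2^{-n-2}$. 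The explicit inequalities \eqref{Equ:sufficient-condition} then give both Item~\ref{item:Lyapunov-spectrum} and the ratio in Item~\ref{item:good-approximation}.

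A single corrector also keeps you within what Remark~\ref{r.keyresidual} provides, which is gluing two saddles; several correctors would require iterating the construction. Chaining through intermediate indices is never needed. Indeed $B_{4\delta}(L)\subset\mathcal L_{\mathrm{per}}(H)$ forces $\lambda_i-\lambda_{i+1}\ge 8\delta$, so all saddles with spectrum in $B_{2\delta}(L)$ have equal or consecutive $\mathrm{s}$-indices.

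\textbf{The exterior-power fallback is false; drop it.} Subadditivity only gives an upper bound for $\log\|\wedge^i M_{\bar p}\|$ in terms of the pieces. But $\sum_{r\le i}\lambda_r(\bar p)$ is governed by the spectral radius of $\wedge^i M_{\bar p}$, which can collapse when the concatenation carries no dominated splitting. For example, $\diag(2,1/2)$ followed by $\diag(1/2,2)$ gives the identity. This is exactly why domination is needed. It costs nothing here: both pieces have simple spectrum and Proposition~\ref{Prop:new-periodic-point} supplies a compatible dominated splitting, so the fallback case never arises.
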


\subsection{Proof of Theorem~\ref{Thm:construction-sequences}}
\label{ss.construction-sequences}
	Take  the residual subset  $\mathcal{R}\subset\diff^1(M)$  in 
	\eqref{e.thesetR}. Let
	$f\in \mathcal{R}$,
	 $H$ be a nontrivial homoclinic class of $f$, and $L=(\lambda_1,\dots,\lambda_k)\in\interior (\mathcal{L}_{\mathrm{per}}(H))$.

	\medskip
	
	\noindent{\bf{The constant $\delta$.}}
	Take $0<\delta<1$ such that 
	\begin{equation}
	\label{e.ballcontained}
	B_{4\delta}(L)
	\subset \interior \mathcal{L}(H).
	\end{equation}
		
	We now define inductively the sequences $\left\{\orb(p_n)\right\}$, $\left\{\gamma_n\right\}$, and  
	$\left\{k_n\right\}$.
	
	\medskip
	
	\noindent{\bf{The periodic point $p_0$, the constants $\gamma_0$ and $k_0$.}}
	Since $L\in\interior ( \mathcal{L}_{\mathrm{per}}(H))$, by Item~\ref{generic:new-periodic-orbit} of Theorem~\ref{Thm:generic-properties}, there exists a hyperbolic periodic point $p_0\in H$ such that
	\begin{itemize}
		\item $\orb(p_0)$ has simple spectrum and is $\delta$-dense in $H$, which is Item~\ref{item:densness}
		of Theorem~\ref{Thm:generic-properties};

		\item  $\lambda_i-\delta<\lambda_i(p_0)<\lambda_i+\delta$ for each $i=1,2,\ldots,m$, 
		thus $\big\|\lambda_i(p_0)-\lambda_i\big\|<\delta$, which is Item~\ref{item:Lyapunov-spectrum}
		of Theorem~\ref{Thm:generic-properties}.
	\end{itemize}
	Since $p_0$ is a periodic point of $f$ with simple spectrum, for every $x\in\orb(p_0)$ and every $i=1,\dots,m$, it holds
	\[
	L_i(p_0,f)=\lim_{k\rightarrow \infty} L_i^{(k)}(x,f) \qquad \text{and} \qquad \lambda_i(p_0)=\lim_{k\rightarrow \infty} \left(L_i^{(k)}(x,f)-L_{i-1}^{(k)}(x,f)\right).
	\]
	Thus there is $k_0\in\mathbb{N}$ such that for every $k\geq k_0$,  every $x\in\orb(p_0)$, and every $i=1,\dots,m$, it holds
	\[
	\left|L_i^{(k)}(x,f)-L_{i-1}^{(k)}(x,f)-\lambda_i(p_0)\right|<\delta.
	\]
	By Remark~\ref{r.Lkiscont}, the 
	functions $L_i^{(k_0)}(x,f), i=1,\dots,m$, are continuous with respect to $x$ and $f$. Thus there exists $0<\gamma_0<1/2$ such that for every $x\in B_{2 \gamma_0}(\orb(p_0))$ and every $i=1,\dots,m$,
	\[\left|L_i^{(k_0)}(x,f)-L_{i-1}^{(k_0)}(x,f)-\lambda_i(p_0)\right|<\delta,\]
	obtaining Item~\ref{item:exterior-product}.
	
	Note  that, at this step of the inductive proof, we do  not yet  need to verify Item~\ref{item:good-approximation}  for
	 $\orb(p_0)$. This
	completes the choice of the first periodic point $p_0$.

\medskip	
\noindent{\bf{Inductive definition of  $\orb(p_{n+1}),\gamma_{n+1},k_{n+1}$.}}
	Assume that $\orb(p_i),\gamma_i,k_i$ have been constructed for all $0\leq i\leq n$ satisfying Items~\ref{item:densness},~\ref{item:good-approximation},~\ref{item:Lyapunov-spectrum}, and~\ref{item:exterior-product}.
	In particular, $\orb(p_n)$ has simple spectrum and 
	\[
	\lambda_i-\frac{\delta}{2^n}<\lambda_i(p_n)<\lambda_i+\frac{\delta}{2^{n}} \qquad \text{for every $i=1,\ldots,m$}.
	\]

	Before going into the details, we sketch the construction of $\orb(p_{n+1})$. First, we choose a hyperbolic periodic orbit $\orb(q_n)$ with simple spectrum that is ``very dense''  in $H$ and such that $i$th-coordinates of $L(q_n)$  and $L(p_n)$ in $B_{2\delta}(L)$ are almost antipodal, where $L$ is the target vector. Applying Proposition~\ref{Prop:new-periodic-point} or its same index version in
Remark~\ref{r.periodiclineartranb},
	 we then obtain a new periodic orbit $\orb(p_{n+1})$ that closely approximates $\orb(p_n)$ and spends some time in a small neighborhood of $\orb(q_n)$, so that the distance from $L(p_{n+1})$ to $L$ is half the distance from $L(p_n)$ to $L$. A careful analysis shows that such an orbit $\orb(p_{n+1})$ indeed exists. We now go into the details.

	\medskip
	\noindent{\bf{The auxiliary orbit $\orb(q_n)$, chosen to adjust  the Lyapunov spectrum.} }
	Recall that the ball $B_{4\delta}(L)$ is contained in $\mathcal{L}_{\mathrm{per}}(H)$,
	\eqref{e.ballcontained},
	thus every element in $B_{4\delta}(L)$ can be approximated by the Lyapunov spectrum of some 
	periodic orbits contained in $H$, recall
	Remark~\ref{r.equality}.

	\begin{remark}[Choice of the point $q_n$]
	\label{rem:choiceofqn}
	By Item~\ref{generic:new-periodic-orbit} of Theorem~\ref{Thm:generic-properties} applied to $p_n$, there is a saddle $q_n\in H$  such that
	\begin{itemize}
		\item[(a)] $\orb(q_n)$ has simple spectrum and is $\frac{\delta}{4^{n+1}}$-dense in $H$;
		
		\item[(b)] for each $i=1,\dots,m$, the $i$th Lyapunov exponent of $\orb(q_n)$ satisfies:
		\begin{itemize}
			\item if $\lambda_i(p_n)\in (\lambda_i-\frac{\delta}{2^n},\lambda_i]$, then 
			\[
			\lambda_i+2\delta-\frac{\delta}{4^{n+1}}<\lambda_i(q_n)<\lambda_i+2\delta+\dfrac{\delta}{4^{n+1}};
			\]
			\item if $\lambda_i(p_n)\in (\lambda_i,\lambda_i+\frac{\delta}{2^n})$, then 
			\[
			\lambda_i-2\delta-\frac{\delta}{4^{n+1}}<\lambda_i(q_n)<\lambda_i-2\delta+\frac{\delta}{4^{n+1}}.
			\] %$\lambda_i+2\delta-\frac{\delta}{4^{n+1}}<\lambda_i(q_n)<\lambda_i+2\delta+\frac{\delta}{4^{n+1}}$ ;
		\end{itemize} 
		\end{itemize}
	\end{remark}
	
	\begin{remark} When $\lambda_i \neq 0$, the saddles $p_n$ and $q_n$ have the same $\mathrm{s}$-index. 
When $\lambda_i = 0$, the $\mathrm{s}$-indices of the saddles $p_n$ and $q_n$ are consecutive.
In both cases, 
Remark~\ref{r.keyresidual} about the set $\mathcal{R}$ allows the definition of the next generation saddle $p_{n+1}$.
This will be done in \eqref{e.saddlegenerationn+1}.
\end{remark}

	Since both $\orb(p_n)$ and $\orb(q_n)$ have simple spectrum, the tangent spaces $T_{\orb(p_n)}M$ and 
	$T_{\orb(q_n)}M$ admit dominated splittings (with respect $Df$) with one-dimensional bundles, say
\[
	T_{\orb(p_n) \cup \orb (q_n)}M=E^1\oplus\cdots\oplus E^m.
\]
	Furthermore,  
	taking $r=p,q$,
	for each $i=1,\dots,m$, one has, 
		\begin{equation}
		\label{Equ:pqn}
		\lambda_i(r_n)=\frac{\log\left\|Df^{\pi(r_n)}|_{E^i_{x}}\right\|}{\pi(r_n)}=L_i^{\pi(r_n)}(x,f)-L_{i-1}^{\pi(r_n)}(x,f)\quad \text{for all $x\in\orb(r_n)$}.
\end{equation}

\medskip

	\noindent{\bf{The candidate periodic orbit $\orb(p_{n+1})$.}} 
	Given any $\varepsilon, \gamma>0$ (the number $\gamma$ to be fixed later),
	having in mind the definition of the residual set $\mathcal{R}$, considering
	$p_n$ and $q_n$  and arbitrarily large numbers 
	 $\alpha,\beta\in \mathbb{N}$ we get a  saddle 
	 \begin{equation}
	 \label{e.saddlegenerationn+1}
	 p_{n+1}\eqdef \bar p_{f,p,q,\gamma, \alpha, \beta}
	 \quad 
	 \mbox{as in \eqref{e.pg}}
	 \end{equation}
	 satisfying Items \ref{Prop:new-periodic-pointitem1}--\ref{Prop:new-periodic-pointitem3} in Proposition~\ref{Prop:new-periodic-point}.
	 In particular,
  \begin{itemize}[leftmargin=0.4cm ]
		\item $\pi(p_{n+1})=\alpha \pi(p_n)+\beta \pi(q_n)+t_0+t_1$;
		\item 
		$\#(\orb(p_{n+1})\cap 
	B_{\gamma_{n+1}}(\orb(p_n))
		\geq \alpha \pi(p_n)$ and $\#(\orb(p_{n+1})\cap 
		B_{\gamma_{n+1}}(\orb(q_n))
		\geq \beta \pi(q_n)$; 
		\item 
		$M_{p_{n+1}}$ 
 admits a dominated splitting 
 compatible with the ones of $M_{p_n}, M_{q_n}$.
 Thus
 $T_{p_{n+1}}M=E^1_{p_{n+1}}\oplus\cdots\oplus E^m_{p_{n+1}}$ with
 one-dimensional bundles.
 Therefore $\orb(p_{n+1})$ has simple spectrum.
	\end{itemize}

Finally, since the saddles $p_n$ and $q_n$ are in the same 
homoclinic class $H$, which is a 
chain recurrence class, by Item~\ref{generic:homorelated} of Theorem~\ref{Thm:generic-properties},
 Remark~\ref{r.thesaddlebarp}
implies that $p_{n+1} \in H$.
	
	In the following, we have to choose a sufficiently small $\gamma$ and large
	integers $\alpha,\beta\gg 1$ to guarantee that the periodic orbit $\orb(p_{n+1})$ satisfies  
	all items in the theorem.
	
	\medskip
	\noindent{\bf{Choice of $\gamma$ to verify Item~\ref{item:densness}.} }Without loss of generality and by construction, we can assume that 
	\[
	p_{n+1}\in B_\gamma(p_n) \qquad \text{and}\qquad  f^{t_0}(p_{n+1})\in B_\gamma (q_n). 
	\]
If $\gamma$ is small enough, then
	$E^i_{p_{n+1}}$ is $C^0$-close to $E^i_{p_{n}}$ and $E^i_{f^{t_0}(p_{n+1})}$ is $C^0$-close to $E^i_{q_{n}}$ for each $i=1,\ldots,m$. More precisely,
	take 
	\begin{equation}
	\label{e.choiceofgamma}
	0<\gamma<\min\left\{\frac{1}{4^{n+1}},\frac{\gamma_n}{2} \right\}
	\end{equation}	
	 such that the following estimates hold:
  \begin{itemize}[leftmargin=0.6cm ]
		\item If $x\in B_\gamma (\orb(p_n))$ then
		$f^j(x)\in B_{\gamma_n}(\orb(p_n))$ for every $j=1,\dots, \pi(p_n)$,
		
		\item If  $y\in B_\gamma (\orb(q_n))$, then $f^j(x)\in B_{\gamma_n}(\orb(q_n))$ for every $j=1,\dots, \pi(q_n)$,

		\item for $r\in \{p,q\}$ and every $i=1,\dots,m$ it holds
			
		\begin{equation}\label{Equ:pqn+1-1}
			\left|\log\left\|Df^{\pi(r_{n})}|_{E^i_{r_{n+1}}}\right\|-\log\left\|Df^{\pi(r_n)}|_{E^i_{r_{n}}}\right\|\right|<
			\pi(r_n) \delta_n, \qquad \delta_n \eqdef \frac{\delta}{1000\cdot 4^n}.
		\end{equation}
		\end{itemize}

    Recall that the auxiliary orbit   $\orb(q_n)$ is $\frac{\delta}{4^{n+1}}$-dense in $H$ and satisfies
    \[
    \#\big(\orb(p_{n+1})\cap B_\gamma (\orb(q_n))\big)
    \geq \beta\cdot\pi(q_n).
    \]
    Thus, by the choice of $\gamma$,  $\orb(p_{n+1})$ is $2 \frac{\delta}{4^{n+1}}$-dense in $H$ provided $\beta>1$. Thus $\orb(p_{n+1})$ is  $\frac{\delta}{2^{n+1}}$-dense in $H$ and Item~\ref{item:densness} holds.
    
    \medskip
	\noindent{\bf{Choices of $\alpha,\beta$ to verify Items~\ref{item:good-approximation} and~\ref{item:Lyapunov-spectrum}.}}
	
	\begin{claim}
	\label{cl.goodapprox}
	$\orb(p_{n+1})$ is a
	$(\gamma_n,\frac{\alpha \pi(p_n)}{\pi(p_{n+1})})$-good approximation 
	of $\orb(p_n)$.
	\end{claim}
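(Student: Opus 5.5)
We have to check Definition~\ref{Def:good-approximation} with $\kappa=\alpha\pi(p_n)/\pi(p_{n+1})$ and tolerance $\gamma_n$. The natural candidate for the set $X$ is the stretch of $\orb(p_{n+1})$ that shadows $\orb(p_n)$. By Item~\ref{Prop:new-periodic-pointitem3} of Proposition~\ref{Prop:new-periodic-point}, and the normalization $p_{n+1}\in B_\gamma(p_n)$, the first $\alpha\pi(p_n)$ iterates of $p_{n+1}$ follow $\orb(p_n)$. So set
\[
X\eqdef\{f^j(p_{n+1})\colon 0\le j<\alpha\pi(p_n)\},
\qquad
\rho(f^j(p_{n+1}))\eqdef f^{j}(p_n).
\]
This gives $\#X=\alpha\pi(p_n)$, and $\#\rho^{-1}(y)=\alpha$ for every $y\in\orb(p_n)$, since $j\mapsto j \bmod \pi(p_n)$ takes each residue exactly $\alpha$ times. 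The third condition of the definition therefore holds.

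One point concerns the strict inequality $\#X/\pi(p_{n+1})>\kappa$. With the choice above, $\#X/\pi(p_{n+1})$ equals $\kappa$ exactly. I would handle this in one of two ways.
\begin{itemize}
\item Read the claim with $\ge$. This is harmless, because later only $\alpha\pi(p_n)/\pi(p_{n+1})>1-2^{-n}$ is needed, and that follows by taking $\alpha\gg\beta$.
\item Add one more iterate to $X$. The transition time $t_0\ge1$, so the point $f^{\alpha\pi(p_n)}(p_{n+1})$ is still $\gamma$-close to $p_n$. This costs a little bookkeeping in the fiber condition, so I prefer the first option and would note it as a remark.
\end{itemize}

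The real content is the distance condition: $d(f^i(x),f^i(\rho(x)))<\gamma_n$ for all $x\in X$ and $0\le i<\pi(p_n)$. Here I would use the construction behind Proposition~\ref{Prop:new-periodic-point}, namely Item~\ref{Prop:new-periodic-pointitem2} together with the way the orbit is built. The orbit of $p_{n+1}$ spends the consecutive block of times $[0,\alpha\pi(p_n))$ inside $B_\gamma(\orb(p_n))$, shadowing $\orb(p_n)$ in phase: $d(f^j(p_{n+1}),f^j(p_n))<\gamma$ for $0\le j<\alpha\pi(p_n)$.

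For $x=f^j(p_{n+1})$ and $i<\pi(p_n)$ there are two cases.
\begin{itemize}
\item If $j+i<\alpha\pi(p_n)$, the phase-shadowing estimate gives $d(f^{i}(x),f^{i}(\rho(x)))<\gamma<\gamma_n$ directly.
\item Otherwise, use the first bullet in the choice of $\gamma$ before \eqref{Equ:pqn+1-1}: if $d(x,\rho(x))<\gamma$, then the next $\pi(p_n)$ iterates stay $\gamma_n$-close. That bullet is stated only as membership in $B_{\gamma_n}(\orb(p_n))$, but it should be applied in its pointwise form, via uniform continuity of $f,\dots,f^{\pi(p_n)}$. If $\gamma$ is small, $d(x,y)<\gamma$ implies $d(f^i x,f^i y)<\gamma_n$ for $i\le\pi(p_n)$, which is exactly what is needed. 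This is already ensured by \eqref{e.choiceofgamma}, possibly after shrinking $\gamma$.
\end{itemize}

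The main obstacle is justifying the in-phase shadowing. Item~\ref{Prop:new-periodic-pointitem2} only counts points of the orbit lying in $B_\gamma(\orb(p_n))$. It does not say that they form a consecutive block tracking $\orb(p_n)$ with the correct phase. I would extract this from the product structure in Item~\ref{Prop:new-periodic-pointitem3}: the perturbation in \cite{Bonetal:03} is supported near $\orb(p_n)$, $\orb(q_n)$ and the transition segments, and the orbit makes $\alpha$ full turns close to $\orb(p_n)$. Alternatively, the uniform-continuity argument above reduces everything to the single pointwise estimate $d(x,\rho(x))<\gamma$, which follows from the turns starting at $p_{n+1}\in B_\gamma(p_n)$.
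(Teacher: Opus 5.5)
Your argument is essentially the paper's: the paper combines the count in Item~\ref{Prop:new-periodic-pointitem2} of Proposition~\ref{Prop:new-periodic-point} with $\gamma<\gamma_n$, and leaves implicit everything you spell out (the consecutive in-phase block read off from the construction in \cite{Bonetal:03}, the map $\rho$ with fibres of size $\alpha$, and the pointwise continuity estimate for the tail of the block); its count likewise only gives $\#X/\pi(p_{n+1})\ge\kappa$, so your first resolution of the strictness issue is the right one, since later only the strict bound against $1-2^{-n}$ is used. Drop your two side remarks, though: adding a single iterate cannot work, because constant fibres force $\#X$ to be a multiple of $\pi(p_n)$; and the closing ``alternative'' is not independent, since $d(x,\rho(x))<\gamma$ for all $x\in X$ is exactly the in-phase block shadowing, which cannot follow from $p_{n+1}\in B_\gamma(p_n)$ alone because $\alpha$ is chosen after $\gamma$.
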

	
	\begin{proof} Note that
	$$
	\pi(p_{n+1})=\alpha\cdot \pi(p_n)+\beta \pi(q_n)+t_0+t_1
	\quad \text{and} \quad
	\#(\orb(p_{n+1})\cap B_\gamma (\orb(p_n)))\geq \alpha \pi(p_n).
	$$
	 As
	 $\gamma\in (0,\gamma_n)$,
	the set $\orb(p_{n+1})$ is a 
	$(\gamma,\frac{\alpha\cdot \pi(p_n)}{\pi(p_{n+1})})$-good approximation of $\orb(p_n)$, implying the claim.
	\end{proof}
	
Recall the dominated splitting $T_{p_{n+1}}M=E^1_{p_{n+1}}\oplus\cdots\oplus E^m_{p_{n+1}}$ with one-di\-men\-sional bundles of  $M_{p_{n+1}}=D_{p_{n+1}} f^{\pi(p_{n+1})}$. Thus the $i$th-Lyapunov exponent of $\orb(p_{n+1})$ satisfies:
	\begin{align*}
		\lambda_{i}(p_{n+1})=\frac{1}{\pi(p_{n+1})}\log \left\|Df^{\pi(p_{n+1})}|_{E^i_{p_{n+1}}}\right\|=
	\frac{1}{\pi(p_{n+1})}\sum_{j=0}^{\pi(p_{n+1})-1} \log \left\|Df|_{E^i_{f^j(p_{n+1})}}\right\|.
      \end{align*}
    Thus, by~\eqref{Equ:pqn} and \eqref{Equ:pqn+1-1}, the following estimates hold:
    \begin{gather*}
    	\lambda_i(p_{n+1})<\frac{\alpha \pi(p_n) \left(\lambda_i(p_n)+\delta_n\right)+\beta \pi(q_n)\cdot\left(\lambda_i(q_n)+\delta_n \right)+(t_0+t_1)\log\|Df\|}{\alpha \pi(p_n)+\beta \pi(q_n)+t_0+t_1},\\
    	\lambda_i(p_{n+1})>\frac{\alpha \pi(p_n)\left(\lambda_i(p_n)-\delta_n\right)+\beta \pi(q_n)\cdot\left(\lambda_i(q_n)-\delta_n \right)+(t_0+t_1)\log m(Df)}{\alpha \pi(p_n)+\beta \pi(q_n)+t_0+t_1},
    \end{gather*}
    where $m(Df)$ is the co-norm of $Df$. To simplify notation, write
    $$ 
    K_1\eqdef  (t_0+t_1)\log\|Df\|, \qquad \mbox{and} \qquad K_2 \eqdef (t_0+t_1)\log m(Df).    
    $$
    
Since $t_0,t_1$ are fixed constants that only depend on $B_\gamma (\orb(p_n))$ and $B_\gamma(\orb(q_n))$ and the linear maps $T_0,T_1$, to guarantee that $\orb(p_{n+1})$ satisfies Items~\ref{item:good-approximation} and~\ref{item:Lyapunov-spectrum}, it is enough to take $\alpha$ and $\beta$ large enough  satisfying the following inequalities:

   \begin{equation}
    \begin{split}\label{Equ:good-approximation-all}
    	\frac{\alpha \pi(p_n)}{\alpha \pi(p_n)+\beta \pi(q_n)+t_0+t_1}&>1-\frac{1}{2^n},\\
     	\frac{\alpha \pi(p_n) \left(\lambda_i(p_n)+\delta_n\right)+\beta \pi(q_n) \left(\lambda_i(q_n)+\delta_n \right)+ K_1}{\alpha \pi(p_n)+\beta \pi(q_n)+t_0+t_1}&<\lambda_i+\frac{\delta}{2^{n+1}},\\
    	\frac{\alpha \pi(p_n) \left(\lambda_i(p_n)-\delta_n\right)+\beta \pi(q_n) \left(\lambda_i(q_n)-\delta_n\right)+K_2}{\alpha \pi(p_n)+\beta \pi(q_n)+t_0+t_1}&>\lambda_i-\frac{\delta}{2^{n+1}}.
    \end{split}
   \end{equation}
   
  We now see how to get all these inequalities simultaneously.    Denote by 
    $$
    t(\alpha, \beta)=\frac{\alpha \pi(p_n)}{\alpha \pi(p_n)+\beta \pi(q_n)}.
    $$
    Taking $\alpha,\beta\rightarrow\infty$
    such that the limit $t(\alpha, \beta)$ exists and is equal to $t$,
    the left hand of the three inequalities in \eqref{Equ:good-approximation-all} converge  respectively to 
    \begin{align}\label{Equ:limit}
    t,\qquad  t\lambda_i(p_n)+(1-t)\lambda_i(q_n)+\delta_n,\qquad t\lambda_i(p_n)+(1-t)\lambda_i(q_n)-\delta_n.
    \end{align}
    Recalling the choices of 	$\lambda_i(p_n)$ and $\lambda_i(q_n)$  in Remark~\ref{rem:choiceofqn} we get:
  \begin{itemize}[leftmargin=0.6cm ]
    	\item[--] if $\lambda_i(p_n)\in (\lambda_i-\frac{\delta}{2^n},\lambda_i]$, then $\lambda_i+2\delta-\frac{\delta}{4^{n+1}}<\lambda_i(q_n)<\lambda_i+2\delta+\frac{\delta}{4^{n+1}}$, thus 
    	\[\begin{split}
	t(\lambda_i-\frac{\delta}{2^{n}})+(1-t)(\lambda_i+2\delta-\frac{\delta}{4^{n+1}})
	&<t\lambda_i(p_n)+(1-t)\lambda_i(q_n)\\
	&<t\lambda_i+(1-t)(\lambda_i+2\delta+\frac{\delta}{4^{n+1}});\end{split}\]
    	
    	\item[--] if $\lambda_i(p_n)\in (\lambda_i,\lambda_i+\frac{\delta}{2^n})$, then $\lambda_i-2\delta-\frac{\delta}{4^{n+1}}<\lambda_i(q_n)<\lambda_i-2\delta+\frac{\delta}{4^{n+1}}$, thus 
    	    	\[\begin{split}
t\lambda_i+(1-t)(\lambda_i-2\delta-\frac{\delta}{4^{n+1}})
	&<t(\lambda_i+\frac{\delta}{2^n})+(1-t)\lambda_i(q_n)\\
	&<t(\lambda_i+\frac{\delta}{2^n})+(1-t)(\lambda_i-2\delta+\frac{\delta}{4^{n+1}}).
	\end{split}\]
    \end{itemize} 
    Therefore to get simultaneously all the inequalities in \eqref{Equ:good-approximation-all}, it is enough to choose $\alpha,\beta$ such  that the limits in \eqref{Equ:limit} satisfy
     \begin{equation}
     \begin{split}\label{Equ:sufficient-condition}
     		t&>1-\frac{1}{2^n},\\
     		t(\lambda_i-\frac{\delta}{2^{n}})+(1-t)(\lambda_i+2\delta-\frac{\delta}{4^{n+1}})&> \lambda_i-\frac{\delta}{2^{n+1}}+\delta_n,  \\
     		t\lambda_i+(1-t)(\lambda_i-2\delta-\frac{\delta}{4^{n+1}})&>\lambda_i-\frac{\delta}{2^{n+1}}+\delta_n,\\
     		t\lambda_i+(1-t)(\lambda_i+2\delta+\frac{\delta}{4^{n+1}})&<\lambda_i+\frac{\delta}{2^{n+1}}-\delta_n,\\
     		t(\lambda_i+\frac{\delta}{2^n})+(1-t)(\lambda_i-2\delta+\frac{\delta}{4^{n+1}})&<\lambda_i+\frac{\delta}{2^{n+1}}-\delta_n.
 	    \end{split}
      \end{equation}
     To proceed, let now
       \begin{equation}
       \label{e.varrho}
          \varrho_n^+ \eqdef \frac{\frac{1}{2^{n+1}}-\frac{1}{1000\cdot 4^n}}{2+\frac{1}{4^{n+1}}},
          \qquad 
          \varrho_n^- \eqdef\frac{\frac{1}{2^{n+1}}+\frac{1}{1000\cdot 4^n}}{2+\frac{1}{2^n}-\frac{1}{4^{n+1}}},
      \end{equation}
      and for future use observe that
      \begin{equation}
      \label{e.lastcall}
      \frac{1}{2^n} < \varrho^-_n < \varrho_n^+.
      \end{equation}
      Thus the interval $(1-\varrho_n^+, 1-\varrho_n^-)$ is nonempty.
      Note that the 
      inequalities in \eqref{Equ:sufficient-condition} 
      are equivalent to
      $$
      1-\varrho_n^+ < t < 1-\varrho_n^-.
      $$
      As a consequence, one can choose 
      infinitely many pairs
       $\alpha,\beta\in\mathbb{N}$ such that 
          \begin{align}\label{Equ:choice-t}
      	  t(\alpha, \beta) = \frac{\alpha\cdot \pi(p_n)}{\alpha\cdot \pi(p_n)+\beta\cdot\pi(q_n)}\in 
	  (1-\varrho_n^+, 1-\varrho_n^-).
      \end{align}

      For these choices, the inequalities in 
      \eqref{Equ:sufficient-condition} hold (hence  
       \eqref{Equ:good-approximation-all}
      also holds). Thus $\orb(p_{n+1})$ satisfies conditions in Items~\ref{item:good-approximation} 
      (where we use \eqref{Equ:choice-t} and \eqref{e.lastcall})
      and~\ref{item:Lyapunov-spectrum}.
      
      \medskip
      
      \noindent{\bf{Choice of the constants $k_{n+1},\gamma_{n+1}$.}}
      This choice is analogous to the one of $k_0,\gamma_0$. We explicit it for completeness. The periodicity 
      and the simple spectrum property 
      of $p_{n+1}$  imply that for every $x\in\orb(p_{n+1})$ and every $i=1\dots,m$,
      \[
      L_i(p_{n+1},f)=\lim_{k\rightarrow \infty} L_i^{(k)}(x,f) \qquad \text{and} \qquad \lambda_i(p_{n+1})=\lim_{k\rightarrow \infty} \left(L_i^{(k)}(x,f)-L_{i-1}^{(k)}(x,f)\right).\]
      Thus there is $k_{n+1}\in\mathbb{N}$, with $k_{n+1}>k_n$, such that 
      for every 
      $x\in\orb(p_{n+1})$ it holds
      \[
      \left|L_i^{(k_{n+1})}(x,f)-L_{i-1}^{(k_{n+1})}(x,f)-\lambda_i(p_{n+1})\right|<\frac{\delta}{2 \cdot 4^n}
      \qquad \text{for every $i=1,\dots,m$}.
      \]
      Since the function $L_i^{(k_{n+1})}(x,f)$ is continuous with respect to $x$ and $f$, recall Remark~\ref{r.Lkiscont},
       there exists $0<\gamma_{n+1}<\frac{\gamma_n}{2}$ such that for any $y\in B_{2\gamma_{n+1}}(\orb(p_{n+1}))$ and every $i=1,\dots,m$, it holds
      \[
      \left|L_i^{(k_{n+1})}(y,f)-L_{i-1}^{(k_{n+1})}(y,f)-\lambda_i(p_{n+1})\right|<\frac{\delta}{4^n}.
      \]
    Therefore, Item~\ref{item:exterior-product} holds for $\orb(p_{n+1})$.
          This ends the proof of Theorem~\ref{Thm:construction-sequences}.
\qed

\subsection{End of the proof of Theorem~\ref{Thm:Lyapunov-spectrum}}
\label{ss.endThm:Lyapunov-spectrum}
	Let $\mathcal{R}\in \diff^1(M)$ be the residual subset from Theorem~\ref{Thm:construction-sequences}. Consider $f\in\mathcal{R}$,  a nontrivial homoclinic class $H$ of $f$, and 
	$L=(\lambda_1,\dots,\lambda_m)\in \interior (\mathcal L_{\mathrm{per}} (H))$. Fixed any $\delta \in (0,1)$,
	consider  the sequences of saddles $\{p_n\}$ inside $H$, of positive constants $\left\{\gamma_n\right\}$, and of positive integers $\left\{k_n\right\}$ provided by Theorem~\ref{Thm:construction-sequences}.
	Note that $\gamma_{n+1}< \gamma_n/2$. Consider also $\kappa_n \eqdef(1-1/2^n)$ and note that
	$\prod_{n=1}^\infty \kappa_n >0$. Note that in our construction $\orb(p_{n+1})$ is a $(\gamma_n,1-\frac{1}{2^n})$-good approximation of $\orb(p_n)$. 
	For each $p_n$, consider the periodic measure $\mu_n\eqdef \mu_{p_n}$ defined in
	\eqref{e.mup}.
	By Item~\ref{item:good-approximation} of Theorem~\ref{Thm:construction-sequences} and Lemma~\ref{Lem:GIKN-criterion}, the sequence $\{\mu_n\}$ weak$^\ast$-converges to an ergodic measure $\mu$ such that 
	\[
	\supp(\mu)=\bigcap_{n\geq 1}\overline{\bigcup_{j\geq n} \orb(p_j)}.
	\]
	By Item~\ref{item:densness} of Theorem~\ref{Thm:construction-sequences}, one has that $\supp(\mu)=H$.

	It remains to verify that $L(\mu)=L$, that is, $\lambda_i(\mu)=\lambda_i$ for each $i=1,\dots,m$. 
	The proof is similar to~\cite[Claim 4.3]{Cheetal:19}.
	Since $\orb(p_{n+1})$ is a $(\gamma_n,1-\frac{1}{2^n})$-good approximation of $\orb(p_n)$,   there are
	a subset $X_{n}\subset \orb(p_{n+1})$ and a map $\rho_n\colon X_n\rightarrow \orb(p_n)$ as in Definition~\ref{Def:good-approximation}.
	Let 
	\begin{equation}
	\label{e.accumulatedprojection}
	Y_n\eqdef \rho_n^{-1}\circ\cdots\circ\rho_0^{-1}(\orb(p_0))\subset\orb(p_{n+1}) \quad \text{and} \quad 
	Y\eqdef \limsup_{n\rightarrow \infty} Y_n.
	\end{equation}
	By definition,
	\[
	\mu_n (Y_n) \geq 
	 \prod_{\ell=0}^{n} \left(1-\frac{1}{2^\ell}\right).
	\]
	Then, by the compactness of $Y$ and $\mu_n\rightarrow \mu$, one has that 
	\[
	\mu(Y)\geq \limsup_{n\rightarrow \infty}\mu_n(Y_n)\geq \prod_{n=0}^{\infty} \left(1-\frac{1}{2^n}\right)>0.
	\]
	The choice  $\gamma_{n+1}<\frac{\gamma_n}{2}$ implies that $Y \subset B_{2\gamma_n}(\orb(p_n))$ for every $n\in \mathbb{N}$. Thus, by Items~\ref{item:exterior-product} and 
	\ref{item:Lyapunov-spectrum}
	of Theorem~\ref{Thm:construction-sequences}, for every $x\in Y$, every $i=1,\dots,m$, and every $n\in\mathbb{N}$ it holds 
		\[
		\begin{split}
		\left|L_i^{(k_n)}(x,f)-L_{i-1}^{(k_n)}(x,f)-\lambda_i\right|&
		\leq\left|L_i^{(k_n)}(x,f)-L_{i-1}^{(k_n)}(x,f)-\lambda_i(p_n)\right|+\left|\lambda_i(p_n)-\lambda_i\right|\\
		 &<\frac{\delta}{2^n}+\frac{\delta}{2^n}=\frac{\delta}{2^{n-1}}.
	\end{split}
	\]
	Since $\{k_n\}$ converges to $\infty$ and $\mu$ is ergodic, for $\mu$-a.e. $x\in Y$ it holds	
	$$
		\lambda_i(\mu)
		=\lim_{k\rightarrow\infty} L_i^{(k)}(x,f)-L_{i-1}^{(k)}(x,f)\\
		=\lim_{n\rightarrow\infty} L_i^{(k_n)}(x,f)-L_{i-1}^{(k_n)}(x,f)=\lambda_i.
	$$
	Since this  assertion holds for every $i=1,\dots,m$, we get
$L(\mu)=L$, This ends the proof of the theorem.
\qed

\section{Proof of Theorem~\ref{Thm:isolated-ergodic-denssness}}
\label{s.proofThm:isolated-ergodic-denssness}

Let $\mathcal{R}$ be a residual subset of $\diff^1(M)$ in \eqref{e.thesetR}.
	Given any $f\in\mathcal{R}$, consider a nontrivial isolated homoclinic class $H$ of $f$,
	$L=(\lambda_1,\dots,\lambda_m)\in\interior (\mathcal{L}(H))$, and  $\mu \in \mathcal{M}_{\mathrm{inv}} (f)$ 
	with $L(\mu)=L$. 

To prove Theorem~\ref{Thm:isolated-ergodic-denssness}, 
we apply arguments similar to those in the proof of Theorem~\ref{Thm:construction-sequences} to construct a sequence of periodic orbits ${\orb(p_n)}$ satisfying the criterion of Lemma~\ref{Lem:GIKN-criterion}. Then the corresponding sequence of periodic measures ${\mu_{p_n}}$  converges to an ergodic measure $\nu$ with the same Lyapunov spectrum as the given invariant measure $\mu$.
It remains to ensure that, given any neighborhood $\mathcal{U}$ of $\mu$, the periodic measures $\mu_{p_n}$,
and hence the limit measure $\nu$, are all contained in $\mathcal{U}$. We now go to the details. The theorem now follows from the next proposition:

\begin{proposition}
\label{p.l.foreveryneigh}
For every a neighborhood $\mathcal{U}$ of $\mu$ in $\mathcal{M}_{\rm inv}(H)$, there is  
	$\nu_0\in\mathcal{U}\cap \mathcal{M}_{\rm erg}(H)$ such that  $\supp (\nu_0)=H$ and $L(\nu_0)=L$.
	\end{proposition}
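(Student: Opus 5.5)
The plan is to rerun the inductive construction of Theorem~\ref{Thm:construction-sequences}, with two changes: the initial orbit $p_0$ is chosen so that $\mu_{p_0}$ is deep inside $\mathcal{U}$, and the good-approximation constants are chosen so that every later measure $\mu_{p_n}$, and hence the limit, stays in $\mathcal{U}$.

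\textbf{Step 1: shrink $\mathcal{U}$ and fix constants.} Fix finitely many continuous functions $\varphi_1,\dots,\varphi_s$ and $\eta>0$ such that
\[
\mathcal{U}':=\Big\{\nu\colon \big|\textstyle\int\varphi_j\,d\nu-\int\varphi_j\,d\mu\big|<\eta \text{ for } j=1,\dots,s\Big\}
\]
satisfies $\overline{\mathcal{U}'}\cap\mathcal{M}_{\rm inv}(H)\subset\mathcal{U}$. Take $\delta$ as in \eqref{e.ballcontained} with $B_{4\delta}(L)\subset\interior\mathcal{L}(H)=\interior\mathcal{L}_{\rm per}(H)$; this equality holds by \eqref{e.samedef}, since $H$ is isolated. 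Let $\omega$ be a common modulus of continuity of the $\varphi_j$, and put $C=\max_j\|\varphi_j\|_\infty$.

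\textbf{Step 2: the initial orbit.} By Lemma~\ref{lProp:approximation-invariant-measure}, there is a saddle $\bar p\in H$ with simple spectrum such that $\mu_{\bar p}$ is $\eta/4$-close to $\mu$ on each $\varphi_j$ and $\|L(\bar p)-L\|<\delta$.

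The orbit of $\bar p$ need not be dense in $H$. We therefore apply Item~\ref{generic:new-periodic-orbit} of Theorem~\ref{Thm:generic-properties} to $\bar p$, with $\varepsilon<\delta$ and with $\gamma$ small and $\kappa$ close to $1$ so that $(\gamma,\kappa)$-good approximation moves each $\int\varphi_j$ by less than $\eta/8$. This gives $p_0\in H$ with simple spectrum, $\orb(p_0)$ $\delta$-dense in $H$, $\|L(p_0)-L\|<2\delta$, and $\mu_{p_0}$ $3\eta/8$-close to $\mu$.

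We then rescale $\delta$: replacing $\delta$ by $2\delta$, and choosing the initial $\delta$ with $B_{8\delta}(L)\subset\interior\mathcal{L}(H)$, the hypotheses of Theorem~\ref{Thm:construction-sequences} at step $n=0$ are met. The constants $k_0,\gamma_0$ are chosen exactly as in that proof.

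\textbf{Step 3: induction with measure control.} We carry out the inductive step of Theorem~\ref{Thm:construction-sequences} unchanged, using Remark~\ref{r.keyresidual} with auxiliary saddles $q_n$. We add one requirement: $\gamma_n<\omega^{-1}(\eta 2^{-n-4})$.

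Claim~\ref{cl.goodapprox}, together with \eqref{Equ:choice-t} and \eqref{e.lastcall}, gives that $\orb(p_{n+1})$ is a $(\gamma_n,\kappa_n)$-good approximation of $\orb(p_n)$ with $1-\kappa_n<\varrho_n^+\lesssim 2^{-n-2}$. For any such approximation,
\[
\Big|\int\varphi_j\,d\mu_{p_{n+1}}-\int\varphi_j\,d\mu_{p_n}\Big|\le \omega(\gamma_n)+2C(1-\kappa_n).
\]
To see this, average over the blocks $\{f^i(x)\colon 0\le i<\pi(p_n)\}$, $x\in X$, using that $\rho$ has constant fibre cardinality. Block overlaps and the leftover part of the orbit have total weight at most $1-\kappa_n$; to keep that estimate clean, take $X$ to consist of starting points of disjoint blocks, which the construction of $p_{n+1}$ provides.

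The exponent $1-\kappa_n$ is forced to be of order $2^{-n}$, and it cannot be made smaller, because the Lyapunov spectrum needs that much mass near $q_n$. Its contribution $2C\sum_n 2^{-n-2}$ is therefore not small in absolute terms. This is the main obstacle.

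The fix I would try is to replace the scale $\delta/2^n$ by $\delta\theta^n$, and $\kappa_n$ by $1-c\,\theta^n$, with $c$ small. This requires re-deriving the interval \eqref{e.varrho} as an interval of the form $(1-\varrho^+,1-\varrho^-)$ with $\varrho^\pm\asymp\theta^{n}$. The recursion only needs the spectral error to contract by a fixed factor, and the corresponding weight $1-t$ on $q_n$ is proportional to that factor times the current error divided by $2\delta$. Choosing $\delta$ small relative to $\eta/C$ then makes the total drift $\sum_n 2C(1-\kappa_n)\approx C\theta_0/\delta\cdot\delta$, which requires care. The cleanest route is to note that $1-t\approx \|L(p_n)-L\|/(2\delta)$, so the total drift is at most $2C\sum_n\|L(p_n)-L\|/(2\delta)$. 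One makes this small by starting with $\|L(p_0)-L\|\le\delta\eta/(8C)$, which is possible by Lemma~\ref{lProp:approximation-invariant-measure}, and contracting by $1/2$ at each step.

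\textbf{Step 4: conclusion.} With these choices, every $\mu_{p_n}$ lies in $\mathcal{U}'$. By Lemma~\ref{Lem:GIKN-criterion}, the limit $\nu_0$ exists, is ergodic, lies in $\overline{\mathcal{U}'}\cap\mathcal{M}_{\rm inv}(H)\subset\mathcal{U}$, and satisfies $\supp(\nu_0)=H$ by Item~\ref{item:densness}. The argument of Section~\ref{ss.endThm:Lyapunov-spectrum} gives $L(\nu_0)=L$.
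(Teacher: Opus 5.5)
Your proposal is correct and is essentially the paper's argument. Starting with spectral error at most $\delta\eta/(8C)$ and halving it at each step is exactly the paper's device of beginning the induction of Theorem~\ref{Thm:construction-sequences} at a large index $n_0$ with $2K\bigl(1-\prod_{n\ge n_0}(1-2^{-n})\bigr)<\eta/8$, which keeps the total mass $\sum_n(1-\kappa_n)$ diverted to the auxiliary orbits $q_n$ small. The remaining differences are cosmetic: you sum per-step drifts $\omega(\gamma_n)+2C(1-\kappa_n)$, whereas the paper composes the good approximations into one good approximation of $\orb(p_{n_0})$ at scale $2\gamma_{n_0}$. In either version the bound follows pointwise from $d(x,\rho(x))<\gamma$ and the constant fibre cardinality of $\rho$, so the disjoint-block structure you invoke is not needed.
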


\begin{proof}	
Since we consider diffeomorphisms in $\mathcal{R}$, we can apply  Lemma~\ref{lProp:approximation-invariant-measure}. 
Consider the sequence of saddles 
$\{\bar p_n\}$ 
in $H\cap \per(f)$ such that every 
$\mu_{\bar p_n}$ has simple spectrum,
$\mu_{\bar p_n}\rightarrow \mu$, and $L(\mu_{\bar p_n})\rightarrow L$ provided by
Lemma~\ref{lProp:approximation-invariant-measure}.
		
		\medskip	
	\noindent{\bf{The neighborhood $\mathcal{V}(\varphi_1,\dots,\varphi_N;\eta)$ of $\mu$.} }
	By the definition of weak$^\ast$ topology,  there is $\eta>0$ and a finite collection of continuous functions $\left\{\varphi_i\colon H\rightarrow \mathbb{R}\right\}_{1\leq i\leq N}$ such that the open set 
	\[
	\mathcal{V}(\varphi_1,\dots,\varphi_N;\eta)\eqdef \left\{\nu\in\mathcal{M}_{\rm inv}(H)\colon \left|\int\varphi_i d\mu-\int \varphi_i d\nu\right|<\eta, ~\text{for all $1\leq i\leq N$} \right\}
	\]
	is contained in $\mathcal{U}$.

	\medskip	
	\noindent{\bf{The constants $K,\delta$ and the  integer $n_0$.} }
	Take $K>0$ and  an integer $n_0\geq 2$  such that
		\begin{align}\label{Equ:n0}
	\|\varphi_i\|_\infty<K \quad \text{for all $1\leq i\leq N$} \qquad \text{and} \qquad
		2 K \left(1-\prod_{n=n_0}^{\infty} \left(1-\frac{1}{2^n}\right)\right)<\frac{\eta}{8}.
	\end{align}
	As in the proof of Theorem~\ref{Thm:construction-sequences}, take  $\delta>0$ such that 
	 \[
	 B_{4 \delta} (L)=\left\{(\eta_1,\dots,\eta_m)\colon |\eta_i-\lambda_i|<4\delta, \,\, \text{for every}\,\,i=1,\dots,m\right\}\subset \interior ( \mathcal{L}(H)).
	 \]
	 
	\medskip	
	\noindent{\bf{The sequence of periodic orbits $\left\{\orb(p_n)\right\}_{n\geq n_0}$.}}
	Taking $n_0$ large enough and applying  Item~\ref{generic:new-periodic-orbit} of Theorem~\ref{Thm:generic-properties} to the saddle $\bar p_{n_0}$ with simple spectrum, we get a saddle $p_{n_0}\in H$ such that
	\begin{itemize}
		\item the orbit $\orb(p_{n_0})$ has simply spectrum and  is $\frac{\delta}{2^{n_0}}$-dense in $H$;
		
		\item the periodic measure $\mu_{p_{n_0}}\in\mathcal{V}(\varphi_1,\dots,\varphi_N;\frac{\eta}{4})$, or equivalently.
\begin{equation}
\label{e.orequiv}		
\left|\int\varphi_i d\mu_{p_{n_0}}-\int \varphi_i d\mu\right|<\frac{\eta}{4}, \quad \text{for every $1\leq i\leq N$};
\end{equation}
		
    		\item $\lambda_i-\frac{\delta}{2^{n_0}}<\lambda_{i}(p_{n_0})<\lambda_i+\frac{\delta}{2^{n_0}}$ for every $i=1,\dots,m$. Equivalently, $L(p_{n_0})\in B_{{\delta}/{2^{n_0}}}(L)$.
	\end{itemize}

Following the induction process in the proof of Theorem~\ref{Thm:construction-sequences}, 
 we get sequences of periodic orbits $\left\{\orb(p_n)\right\}_{n\geq n_0}$, of constants 
	$\left\{\gamma_n\right\}_{n\geq n_0}\subset (0,1/2)$, and of positive integers $\left\{k_n\right\}_{n\geq n_0}$ 
	with
	$$
	\gamma_{n+1}<\frac{\gamma_n}{2}
	\qquad \mbox{and}
	\qquad
	k_{n+1} > k_n
	$$ 
	satisfying the following
	for every $n\geq n_0$:
	\begin{enumerate}
		\item\label{item:densnessB} 
		the orbit $\orb(p_n)$ has simple spectrum and is $\frac{\delta}{2^n}$-dense in $H$;
		
		\item\label{item:good-approximationB} 
		the orbit is $\orb(p_{n+1})$ is a $(\gamma_n,1-\frac{1}{2^n})$-good approximation of $\orb(p_n)$;
		
		\item\label{item:Lyapunov-spectrumB}  $\lambda_i-\frac{\delta}{2^n}<\lambda_i(p_n)<\lambda_i+\frac{\delta}{2^{n}}$ for each $i=1,\dots,m$, hence $\left\|L(p_n)-L\right\|<\frac{\delta}{2^n}$;
		
		\item\label{item:exterior-productB} 
		for every $n\in\mathbb{N}$ and every $x\in B_{2\gamma_n}(\orb(p_n))$ and every $i=1,\dots,k$, one has
		\[\left|L_i^{(k_n)}(x,f)-L_{i-1}^{(k_n)}(x,f)-\lambda_i(p_n)\right|<\frac{\delta}{2^n}
		\qquad  \text{for  every $i=1,2,\ldots,k$}; 
		\]
		\item\label{item:gamma}
		
		for every $x,y\in H$ with $d(x,y)<2\gamma_{n_0}$, one has 
		\[\left|\varphi_i(x)-\varphi_i(y)\right|<\frac{\eta}{8}, \qquad \text{for every $i=1,\dots,N$}.\]
	\end{enumerate}	
	
	\medskip
	
	\noindent{\bf {The ergodic measure $\nu_0$.} }
	By Lemma~\ref{Lem:GIKN-criterion}, the sequence $\{\mu_{p_n}\}_{n\geq n_0}$ converges to an ergodic 
	measure $\nu_0$ with 
	\[
	\supp(\nu_0)=\bigcap_{n\geq n_0}\overline{\bigcup_{i\geq n} \orb(p_i)}=H,
	\]
	where the second equality is by Item~\ref{item:densnessB}.
	By construction, 
	$L(\nu_0)=L.$ 
	Then the proposition follows from the next claim.
	
	\begin{lemma}\label{lClaim:nu0-close-to-mu}
		$\nu_0\in \mathcal{V}(\varphi_1,\dots,\varphi_N;\eta)\subset \mathcal{U}$.
	\end{lemma}
	\begin{proof}
		As $\nu_0=\lim\limits_{n\rightarrow \infty} \mu_{p_n}$, it is sufficient to see
		that $\mu_{p_n}\in \mathcal{V}(\varphi_1,\dots,\varphi_N;\frac{\eta}{2})$ for all $n> n_0$.
		
		By Item~\ref{item:good-approximationB}, 
		one can see that for each $n> n_0$, the orbit 
		$\orb(p_n)$ is a $\left(\bar\gamma_n,\bar\kappa_n\right)$-good approximation of $\orb(p_{n_0})$, where 
		\[
		\bar\gamma_n\eqdef \sum_{n=n_0}^{n-1}\gamma_n<2\gamma_{n_0}\qquad \text{and}\qquad \bar\kappa_n\eqdef \prod_{n=n_0}^{n-1} \left(1-\frac{1}{2^n}\right).
		\]		
		The estimate in $\bar \gamma_n$ follows from $\gamma_{n+1}<\frac 12 \gamma_n$ and implies 
		$\sum_{n=n_0}^{n-1}\gamma_n<2\gamma_{n_0}$.
			
		To simplify notations, write $\pi_n\eqdef \pi(p_n)$.	
		Arguing now as in the end of the proof of Theorem~\ref{Thm:Lyapunov-spectrum}, we define 
		sets $Z_n$ similarly as the set $Y_n$ in \eqref{e.accumulatedprojection}.
		More precisely,
		consider the
	subsets $X_{n}\subset \orb(p_{n+1})$ and the map $\rho_n\colon X_n\rightarrow \orb(p_n)$ provided by Definition~\ref{Def:good-approximation} and let
	\begin{equation}
	Z_n\eqdef \rho_{n-1}^{-1}\circ\cdots\circ\rho_{n_0}^{-1}(\orb(p_{n_0}))\subset\orb(p_{n}).	
	\end{equation}
In this way we get
		\begin{equation}
		\label{e.inthiswayweget}
		\frac{\# Z_n}{\pi_n}>\bar\kappa_n\eqdef \prod_{n=n_0}^{n-1} (1-\frac{1}{2^n}).
				\end{equation}	
Moreover, 
  \begin{itemize}[leftmargin=0.4cm ]				
			\item  $d(f^j(z),f^j(\rho(z)))<\sum_{n=n_0}^{n-1}\gamma_n<2\gamma_{n_0}$ for every $z\in Z_n$ and every $j=0,1,\dots,\pi_{n_0}$.
			Thus, by Item~\ref{item:gamma}, one has 
		\begin{equation}
		\label{e.comparation}
		\left|\varphi_i(z)-\varphi_i(\rho(z))\right|<\frac{\eta}{8}, \qquad \text{for every $i=1,2,\ldots,N$}.
		\end{equation}
		
			\item $\#\rho^{-1}(z)$ is constant and does not depend on $z\in\orb(p_{n_0})$.
			Thus 
			$$
			\# Z_n=\#\rho^{-1}(z) \pi_{n_0}.
			$$
		\end{itemize}
				
\begin{claim}
\label{cf.estimate}
For every $n \geq n_0$ and $i=1,\dots,N$ it holds
$$ 
\left|\int \varphi_i d \mu_{p_n}- \int \varphi_i d \mu_{p_{n_0}}\right|< \dfrac{\eta}{4}.
$$
\end{claim}

Let us postpone the proof of Claim~\ref{cf.estimate} and complete the proof of the lemma. 
Using the fact and \eqref{e.orequiv}	we get, for each
 $1\leq i\leq N$, 
		\begin{align*}
			\left|\int\varphi_i d\mu_{p_{n}}-\int \varphi_i d\mu\right|\leq \left|\int \varphi_i d \mu_{p_n}- \int \varphi_i d \mu_{p_{n_0}}\right|+\left|\int\varphi_i d\mu_{p_{n_0}}-\int \varphi_i d\mu\right|<\frac{\eta}{2}.
		\end{align*}
		Thus  $\mu_{p_n}\in \mathcal{V}(\varphi_1,\dots,\varphi_N;\frac{\eta}{2})$ for all $n\geq n_0$.
		As $\nu_0$ is the limit of the measures $\mu_{p_n}$ the lemma follows.

\begin{proof}[Proof of Claim~\ref{cf.estimate}]					
Given any $i=1,\dots,N$ write 
\begin{align*}
	\Big|\int \varphi_i d \mu_{p_n}- \int &\varphi_i d \mu_{p_{n_0}}\Big|
	=\left|\frac{1}{\pi_n} \sum_{j=0}^{\pi_n-1} \varphi_i (f^j(p_{n}))
			-\frac{1}{\pi_{n_0}} \sum_{j=0}^{\pi_{n_0}-1} \varphi_i (f^j(p_{n_0}))\right|\\
	=&\left| \frac{1}{\pi_{n}}\left( \sum_{y\in Z_n} \varphi_i(z) + \sum_{x\in \orb(p_n)\setminus Z_n} \varphi_i(x) \right)- 
			\frac{1}{\pi_{n_0}}\sum_{j=0}^{\pi_{n_0}-1} \varphi_i (f^j(p_{n_0}))\right|\\
	\leq & \left| \frac{1}{\pi_{n}}\sum_{y\in Z_n} \varphi_i(z) -\frac{\# Z_n}{\pi_{n}}\frac{1}{\pi_{n_0}}\sum_{j=0}^{\pi_{n_0}-1} \varphi_i (f^j(p_{n_0})) \right|\\
	&+\left| \frac{1}{\pi_{n}}\sum_{x\in \orb(p_n)\setminus Z_n} \varphi_i(x) -  
			\left(1-\frac{\# Z_n}{\pi_{n}}\right)\frac{1}{\pi_{n_0}}\sum_{j=0}^{\pi_{n_0}-1} \varphi_i (f^j(p_{n_0}))\right|\\
			\leq & \frac{1}{\pi_{n}}\left| \sum_{z\in Z_n}\left(\varphi_i(z) -\varphi_i(\rho(z))\right)\right|+ \left(1-\frac{\# Z_n}{\pi_{n}}\right) 2 \left\|\varphi_i\right\|_{\infty}\\
		 \footnotesize{\mbox{(by \eqref{e.comparation}, \eqref{e.inthiswayweget}, \eqref{Equ:n0})}}	<& \frac{\# Z_n}{\pi(p_{n})}\cdot \frac{\eta}{8}+ 2\cdot K\cdot \left(1-\prod_{n=n_0}^{\infty} \left(1-\frac{1}{2^n}\right)\right)\\
	 \footnotesize{\mbox{(by \eqref{Equ:n0})}}	 <
	 & \frac{\eta}{8}+\frac{\eta}{8}=\frac{\eta}{4},
\end{align*}
proving the claim. 
\end{proof}

		Thus, one has for all $1\leq i\leq N$,
		\begin{align*}
			\left|\int\varphi_i d\mu_{p_{n}}-\int \varphi_i d\mu\right|\leq \left|\int \varphi_i d \mu_{p_n}- \int \varphi_i d \mu_{p_{n_0}}\right|+\left|\int\varphi_i d\mu_{p_{n_0}}-\int \varphi_i d\mu\right|<\frac{\eta}{2},
		\end{align*}
		ending he proof of Lemma~\ref{lClaim:nu0-close-to-mu}.			
	\end{proof}
	The proof of Proposition  \ref{p.l.foreveryneigh} is now complete.
	\end{proof}
	
	The proof of Theorem~\ref{Thm:isolated-ergodic-denssness} is now complete. \qed

\section{Interior of Lyapunov spectrum: proof of Theorem~\ref{Thm:periodic-interior-spectrum}}
\label{s.interiorofthespectrum}

To prove Theorem~\ref{Thm:periodic-interior-spectrum}, we use the perturbation
Lemma~\ref{Lem:perturbation-spectrum} below that allows us to increase or decrease prescribed Lyapunov exponents, while preserving homoclinic relations, see  Section~\ref{ss.aperturbationlemma}. The proof of Theorem 
\ref{ss.aperturbationlemma} is completed in Section~\ref{ss.endofThm:periodic-interior-spectrum}.

\subsection{A perturbation lemma to increase/decrease Lyapunov exponents}\label{ss.aperturbationlemma} 

\begin{lemma}\label{Lem:perturbation-spectrum}
	Let $\mathcal{R}$ be the residual  set of $\diff^1(M)$ in \eqref{e.thesetR}. Then 
for every $f\in\mathcal{R}$ and every  saddle $p$ of $f$ with simple spectrum and nontrivial homoclinic 
class the following holds: for every subset $I\subset\{1,\dots,m\}$ and every $C^1$-neighborhood
$\mathcal{U}$ of $f$, there exists $g\in\mathcal{U}$ such that

	\begin{enumerate}
		\item $g$ preserves the periodic orbit $\orb(p,f)$: $g|_{\orb(p,f)}=f|_{\orb(p,f)}$;
		
		\item $g$ has a saddle $q_g^I$ such that $\,\orb(q_g^I,g)$ is homoclinically related to $\orb(p,g)$,
		 and 
		 \item
		 the spectrum of $q^I_g$ for $g$ satisfies
		\[
		\begin{split}
		&\lambda_i(q_g^I,g)>\lambda_i(p,g) \quad \mbox{for every $i\in I$};\\
		&\lambda_i(q_g^I,g)<\lambda_i(p,g) \quad \mbox{for every $i\in \{1,2,\dots,m\}\setminus I$.}
		\end{split}
		\]		
	\end{enumerate} 
\end{lemma}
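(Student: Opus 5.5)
The plan is to build $q^I_g$ as a long periodic orbit shadowing a transverse homoclinic orbit of $\orb(p)$, and to tune its exponents by a Franks-type perturbation supported near $\orb(p)$ but off the orbit itself. Since $p$ has simple spectrum, $M_p$ has $m$ real eigenvalues with distinct moduli and eigendirections $E^1_p,\dots,E^m_p$. Because $H(p)$ is nontrivial, there is a transverse homoclinic point $z\in W^{\mathrm s}(\orb(p))\pitchfork W^{\mathrm u}(\orb(p))$. The standard horseshoe and shadowing construction (equivalently, the same-index periodic linear system with transitions of Lemma~\ref{l.periodiclinearsystem} and Remark~\ref{r.periodiclineartranb}, applied to the pair $p,p$) yields, for each large $\alpha$, a saddle $q_\alpha$ homoclinically related to $\orb(p)$ with period $\alpha\pi(p)+t$, where $t$ is fixed. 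Its first return derivative is conjugate to
\[
T\circ I_\alpha\circ M_p\circ\cdots\circ I_1\circ M_p\circ I_0,
\]
with $I_j$ close to the identity and $T$ a fixed transition map. Consequently $\lambda_i(q_\alpha)=\lambda_i(p)+O(1/\alpha)$.

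Next I will perturb away from the orbit. Fix a small neighborhood $U$ of $\orb(p)$ and a sequence of iterates of $q_\alpha$ that lie in $U$ but stay at a definite distance $r>0$ from $\orb(p)$. For example, take the points of $\orb(q_\alpha)$ in a fundamental domain of the local stable and unstable manifolds; these exist for every $\alpha$ at uniform distance from $p$. Using Franks' lemma, or better a perturbation $g=f\circ h$ where $h$ is $C^1$-close to the identity, equals the identity near $\orb(p)$ (so item (1) holds), and in a chart at such a point acts as $\diag(e^{\pm\epsilon},\dots)$ in the coordinates adapted to $E^1\oplus\cdots\oplus E^m$, with sign $+$ for $i\in I$ and $-$ otherwise.

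The key point is that the number of such visits must grow linearly in the period. One fundamental domain gives only one visit per period, so the effect would be only $\epsilon/\alpha$, while the error is $O(1/\alpha)$. To fix this, I will instead consider periodic orbits shadowing $\alpha$ copies of $\orb(p)$ and then $k$ laps around a horseshoe built from several homoclinic points $z_1,\dots,z_s$ in a small neighborhood of $\orb(p)$. Alternatively, and more simply, I will perturb on a tiny annulus around each point of $\orb(p)$ that $q_\alpha$ crosses at bounded distance of order $\rho$ during each of its $\alpha$ passages. The latter does not work because $q_\alpha$ approaches $p$ exponentially.

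The cleaner route is to choose $g$ first: make a small perturbation near a segment of the homoclinic orbit of $z$, away from $\orb(p)$, so that the derivative along the transition $T$ becomes $T_g=T\circ\diag(e^{\pm C})$, with $C$ large but obtained by spreading the perturbation over many iterates $f^j(z)$, $|j|\le N$, each multiplied by $e^{\pm\epsilon}$. The point $z$ stays a transverse homoclinic point for $g$, since transversality is open and the bundles near $z$ are adapted. The new saddles $q_\alpha$ then have exponents $\lambda_i(p)+(\pm 2N\epsilon+O(1))/(\alpha\pi(p)+t)$. Choosing $N\epsilon\gg$ the $O(1)$ term, which depends only on the original transition, and then taking $\alpha$ large, gives the sign pattern in item (3) while keeping $q_\alpha$ hyperbolic and homoclinically related to $\orb(p,g)$.

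The main obstacle is the bookkeeping in this last step. The dominated splitting $E^1\oplus\cdots\oplus E^m$ must extend along the homoclinic orbit segment for $g$, so that the diagonal perturbations act on the correct directions and the exponents of $q_\alpha$ are genuinely sums of logarithmic norms along one-dimensional bundles, as in \eqref{Equ:pqn}. In addition, the $O(1)$ contribution of the unperturbed transition must be shown to stay bounded independently of $N$. I would handle both by using cone fields near $\orb(p)\cup\orb(z)$ coming from simple spectrum and the hyperbolicity of the horseshoe, performing the perturbation in coordinates in which the splitting is nearly invariant.
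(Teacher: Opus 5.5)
There is a genuine gap in your final step: the claimed error bound does not hold for $C^1$ maps. You assert $\lambda_i(q_\alpha)=\lambda_i(p)+O(1/\alpha)$. After perturbing along $f^j(z)$, $|j|\le N$, you assert a deviation $(\pm 2N\varepsilon+O(1))/(\alpha\pi(p)+t)$, with the $O(1)$ independent of $\alpha$. You then fix $N$ and let $\alpha\to\infty$. But the $I_j$ are only close to the identity. The accumulated error can be as large as $\sum_j\|I_j-\mathrm{Id}\|$, that is, $\sum_j\omega\big(d(f^j(q_\alpha),\orb(p))\big)$, where $\omega$ is the modulus of continuity of $Df$ (and of the bundles). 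Along the $\alpha$ passes these distances decay like $e^{-c\min(j,\alpha\pi(p)-j)}$. For a map that is only $C^1$, $\sum_{j\le\alpha}\omega(e^{-cj})$ is $o(\alpha)$ but need not be bounded. It can grow like $\log\alpha$, or even like $\alpha/\log\alpha$ when $\omega(r)\approx 1/\log\log(1/r)$, and it can have a fixed sign in some direction $i$. A fixed gain $\pm 2N\varepsilon$ per period is then swamped as $\alpha\to\infty$, and item (3) can fail. Your estimate is a bounded-distortion estimate valid under $C^{1+\theta}$ regularity. The diffeomorphism $f\in\mathcal R$ is only $C^1$, and you never reduce to a smoother map, for instance by a smooth approximation conjugated so as to fix $\orb(p)$. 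Within your own framework, another repair is to stop letting $\alpha\to\infty$. Instead, use horseshoe orbits of $g$ whose period is comparable to $N$, so that the perturbed segment is a positive fraction of the period.

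The missing idea is that the perturbation must act on a \emph{definite fraction} of the iterates of the periodic orbit. Then the exponents move by an amount of order one, not of order $1/\text{period}$. The paper does exactly this. Having fixed the perturbation size $\eta$ allowed by $\mathcal U$, it uses Item~\ref{generic:new-periodic-orbit} of Theorem~\ref{Thm:generic-properties} to take a saddle $q$ with the following properties: simple spectrum; homoclinically related to $p$; exponents within a small fraction of $\eta$ of those of $p$; and a $(\gamma,\frac12)$-good approximation of $\orb(p)$. Via Lemma~\ref{Lem:Gourmelon-Franks}, it multiplies $Df$ at every point of $X\subset\orb(q)$ by $\exp(\diag(\pm\eta,\dots,\pm\eta))$, in bases adapted to the one-dimensional bundles of $q$. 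Since $X$ is at least half of the orbit, each $\lambda_i(q)$ moves by at least about $\eta/2$ in the prescribed direction. Your reason for discarding perturbations near points exponentially close to $\orb(p)$ applies to a fixed annulus around $\orb(p)$, not to Franks-type perturbations. Those can be supported in arbitrarily small balls around the points of $\orb(q)$, with $C^1$ size independent of the radius, so $\orb(p)$ is untouched. The cost is that the homoclinic relation is no longer automatic, whereas your ``perturb first, then take horseshoe orbits'' ordering gives it for free. The paper pays this cost with Gourmelon's version of Franks' lemma. It preserves compact pieces of $W^{\mathrm s}_\delta(\orb(q))$ and $W^{\mathrm u}_\delta(\orb(q))$ containing heteroclinic points $z^{\mathrm s},z^{\mathrm u}$ that lie outside the support.
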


To prove Lemma~\ref{Lem:perturbation-spectrum} we use the generalized
version of Franks' lemma \cite{Fra:71}  in ~\cite{Gou:07}, which allows 
to perturb the derivative along  hyperbolic periodic orbits while preserving
selected parts of their invariant manifolds.

Given a hyperbolic periodic orbit $\orb(q)$ of $f$ and $\delta>0$, let $ W^{\mathrm{s}}_\delta(\orb(q), f)$ and $W^{\mathrm{u}}_\delta(\orb(q), f)$ be the local stable and unstable manifolds of $\orb(q)$ of size $\delta$.

\begin{lemma}[{\cite[Theorem 2]{Gou:07}}]\label{Lem:Gourmelon-Franks}
	Let 
	$f\in\diff^1(M)$ and
	$q$ be saddle of $f$. Given $\varepsilon>0$
	 consider a one-parameter family of maps
\[
\{A_{j,t}\}_{j=0,\dots,\pi(q)-1,\ t\in[0,1]}
\subset \mathrm{GL}(m,\mathbb{R})
\]
such that 
  \begin{itemize}[leftmargin=0.6cm ]
		\item $A_{j,0}=Df(f^j(q))$ for every $j=0,\dots,\pi(q)-1$;
		
		\item 
		for every $j=0,\dots,\pi(q)-1$ and every $t\in[0,1]$ it holds
$$\max\left\{\|A_{j,t}-Df(f^j(q))\|,\|A_{j,t}^{-1}-Df^{-1}(f^j(q))\|\right\}<\varepsilon;
$$ 		
		\item $A_{\pi(q)-1,t}\circ \cdots \circ A_{0,t}$ is hyperbolic for every $t\in [0,1]$.
	\end{itemize}
	Then for every neighborhood $V$ of $\orb(q)$, every $\delta>0$, and every pair of compact sets $K^{\mathrm{s}}\subset W^{\mathrm{s}}_\delta(\orb(q), f)$ and $K^{\mathrm{u}}\subset W^{\mathrm{u}}_\delta(\orb(q), f)$ disjoint with $V$, there is $g\in\diff^1(M)$ 
	such that
	
	\begin{enumerate}
		\item $g$ is $\varepsilon$-$C^1$-close to $f$ and coincides with $f$ on $\orb(q)$ and outside $V$;
		
		\item $K^{\mathrm{s}}\subset W^{\mathrm{s}}_\delta(\orb(q), g)$ and $K^{\mathrm{u}}\subset W^{\mathrm{u}}_\delta(\orb(q), g)$; 
		
		\item $Dg(g^j(q))=Dg(f^j(q))=A_{j,1}$ for every $j=0,\dots,\pi(q)-1$. 	
		\end{enumerate}
\end{lemma}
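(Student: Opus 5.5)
We plan to reduce the statement to a \emph{local linear model} around $\orb(q)$ and then to carry out the perturbation in small steps along the path $t\mapsto (A_{j,t})_j$. Fix exponential charts at the points $f^j(q)$, $j=0,\dots,\pi(q)-1$. In these charts $f$ is $C^1$-close to $Df(f^j(q))$ on balls of a small radius $r$. First we linearize: by the classical Franks lemma \cite{Fra:71}, using a radial bump function, we replace $f$ on $B_{r}(f^j(q))$ by its derivative $A_{j,0}$. The invariant manifolds of $\orb(q)$ are unchanged outside a very small neighborhood, since we only modify the dynamics close to the orbit. The compact sets $K^{\mathrm{s}}$ and $K^{\mathrm{u}}$ lie outside $V$. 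Each of their points enters the linearized region in finitely many iterates (forward iterates for $K^{\mathrm{s}}$, backward iterates for $K^{\mathrm{u}}$). Hence the problem reduces to controlling the new local stable and unstable manifolds near the orbit, inside a fixed fundamental domain.

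Next we discretize the path. By compactness of $[0,1]$ and the hyperbolicity of $A_{\pi(q)-1,t}\circ\cdots\circ A_{0,t}$ for all $t$, the stable and unstable bundles $E^{\mathrm{s}}_t,E^{\mathrm{u}}_t$ along the orbit vary continuously in $t$, with uniform hyperbolicity constants. We choose $0=t_0<t_1<\dots<t_N=1$ so that consecutive bundles are $\theta$-close, with $\theta$ as small as needed. At stage $k$ the local model is the linear cocycle $A_{\cdot,t_k}$ on a ball of radius $r_k$, with $r_{k+1}\ll r_k$, so the balls are nested. In the annulus $B_{r_k}\setminus B_{r_{k+1}}$ we interpolate between $A_{\cdot,t_k}$ and $A_{\cdot,t_{k+1}}$. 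We also compose with a near-identity linear map that sends $E^{\mathrm{s}}_{t_k}\oplus E^{\mathrm{u}}_{t_k}$ to $E^{\mathrm{s}}_{t_{k+1}}\oplus E^{\mathrm{u}}_{t_{k+1}}$. The purpose is that the graph of the old local stable manifold, which comes from outside the annulus, is carried onto the new linear stable space in the inner ball, and likewise for the unstable manifold. Inductively, the final map $g$ equals $A_{\cdot,1}$ at the orbit, which gives item (3). It coincides with $f$ on $\orb(q)$ and outside $V$, which gives item (1). Its stable and unstable manifolds still contain $K^{\mathrm{s}}$ and $K^{\mathrm{u}}$, because every modification preserves the relevant pieces of the invariant manifolds, which gives item (2).

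The main obstacle is the $C^1$ size of the perturbation. A rotation by an angle $\theta$ performed across an annulus $r'<|x|<r$ costs roughly $\theta/\log(r/r')$ in $C^1$ norm if we use a logarithmic radial cutoff $\psi(\log|x|)$. We will therefore choose $r_{k+1}=r_k e^{-L}$ with $L$ large, so that each stage contributes at most $\varepsilon$. Since the annuli are disjoint, these contributions do not add up. The distance between $A_{j,t}$ and $Df(f^j(q))$ is bounded by $\varepsilon$ by hypothesis, and interpolating between them only adds an arbitrarily small error. We must also check that the hyperbolicity of each intermediate cocycle is uniform, so that the cone fields are preserved across the annuli. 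This is what guarantees that the glued local manifolds are genuine graphs and remain invariant; it is the delicate point, and we will handle it with cone estimates uniform in $t$.
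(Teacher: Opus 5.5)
The paper does not prove this lemma; it imports it from Gourmelon \cite{Gou:07}, so your sketch is an independent reconstruction. Its architecture is reasonable: linearize, subdivide the hyperbolic path using continuity of $E^{\mathrm{s}}_t,E^{\mathrm{u}}_t$, and realize the steps in nested shells so that the $C^1$ costs do not add up. But it has a genuine gap exactly at conclusion (2), which is the whole point of the lemma.

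\textbf{The linearization step.} You assert that the invariant manifolds are unchanged outside a small neighborhood ``since we only modify the dynamics close to the orbit''. This is false. Outside the support, $W^{\mathrm{s}}(\orb(q),g)$ consists of the points whose forward orbits land in the local stable manifold of $g$ inside the support, and that local manifold does change. Take $g=\rho f+(1-\rho)Df(f^j(q))$ with a radial bump. For $x$ on the curved graph $W^{\mathrm{s}}_{\rm loc}(f)$, the point $Df(f^j(q))x$ is in general not on that graph. In the inner ball the stable set is the linear space $E^{\mathrm{s}}$. So nothing forces the points of $f^N(K^{\mathrm{s}})$ to converge to the orbit.

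\textbf{The shell step.} It has the same defect. Interpolating between $A_{t_k}$ and $A_{t_{k+1}}$ and composing with a near-identity linear map leaves no prescribed set invariant. Points of $E^{\mathrm{s}}_{t_k}$ spend many iterates in a shell of ratio $e^{L}$, and each iterate twists them differently. Uniform cone estimates do not repair this. They give a local stable manifold that is a Lipschitz graph $C^0$-close to $E^{\mathrm{s}}$. Containing a prescribed compact set, however, is a closed, non-robust condition that no cone argument delivers. So the ``delicate point'' you single out is not the real one.

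\textbf{What is missing: exact invariance built into the construction.}

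\emph{Straighten first.} Use charts at $f^j(q)$ in which $W^{\mathrm{s}}_{\rm loc}(f)$ and $W^{\mathrm{u}}_{\rm loc}(f)$ are the linear spaces $E^{\mathrm{s}},E^{\mathrm{u}}$. Such charts are $C^1$-close to linear on small balls because the manifolds are tangent to the splitting. They are coordinates, not perturbations, so they need no cutoff. In these charts both $f$ and $Df(f^j(q))$ map $E^{\mathrm{s}}_j$ into $E^{\mathrm{s}}_{j+1}$ and $E^{\mathrm{u}}_j$ into $E^{\mathrm{u}}_{j+1}$, hence so does every convex interpolation of them.

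\emph{Move the splitting by conjugation, not composition.} Use $h_{j+1}\circ A_{j,t_k}\circ h_j^{-1}$ with $h_j(x)=\bigl(I+\phi(x)(P_j-I)\bigr)x$. Here $P_j$ is near the identity and sends $E^{\mathrm{s/u}}_{t_k,j}$ to $E^{\mathrm{s/u}}_{t_{k+1},j}$. The invariant manifolds of this map are $h(E^{\mathrm{s/u}}_{t_k})$: exactly $E^{\mathrm{s/u}}_{t_k}$ outside the shell and exactly $E^{\mathrm{s/u}}_{t_{k+1}}$ inside.

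\emph{Change the linear map at fixed splitting.} Convexly interpolate between $P_{j+1}A_{j,t_k}P_j^{-1}$ and $A_{j,t_{k+1}}$. Both preserve the splitting at $t_{k+1}$, so the interpolation does too.

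\emph{Use adapted norms.} You need norms, uniform along the path (available by hyperbolicity and compactness of $[0,1]$), in which all the maps involved contract on $E^{\mathrm{s}}$ and expand on $E^{\mathrm{u}}$ in a single step. Define the shells in these norms. Then points on the straightened subspaces move monotonically inward under $g$ (outward under $g^{-1}$) and converge to the orbit.

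With these ingredients, (2) holds exactly and your $C^1$ bookkeeping goes through.
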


\begin{proof}[Proof of Lemma~\ref{Lem:perturbation-spectrum}]
	Write by $\lambda_i=\lambda_i(p)$,  $i=1,\dots,m$. 
	Since $\orb(p)$ has simple spectrum, $T_{\orb(p)}M$ has a $Df$-invariant splitting $T_{\orb(p)}M=E^1\oplus \cdots\oplus E^m$ with 
	one-dimensional bundles.  	 We have 
	\[
	\lambda_i= \frac{1}{\pi(p)}\log\left\|Df^{\pi(p)}|_{E^i_{x}}\right\|
	\qquad \text{for every $x\in\orb(p)$}.
	\]
	As $\lambda_i\neq 0$, because $p$ is hyperbolic,
	there is $\eta_0>0$ such that 
	\[
	\eta_0<\frac{1}{2}\min_{1\leq i\leq m-1}\left\{\lambda_{i+1}-\lambda_i\right\}\qquad 
	\text{and}
	\qquad \eta_0<\min_{1\leq i\leq m}\frac{1}{2}|\lambda_i|.
	\]

For each $j=0, \dots, \pi(p)-1$, we choose
a normal basis $\beta_{f^j(p)}$
 of $ T_{f^j(p)}M$ whose vectors are in the bundles
 $E^i_{f^j(p)}$
with respect to which 
$Df \colon T_{f^j(p)}M \to T_{f^{j+1}(p)}M$
is represented by  a diagonal matrix $A_j$. 
	
	Fix $\gamma\in (0,\gamma_0)$ small enough (to be determined later). By Item~\ref{generic:new-periodic-orbit} of Theorem~\ref{Thm:generic-properties}, 
	there exists a saddle $q$  of $f$ (whose orbit is different from that of $p$) such that
  \begin{itemize}[leftmargin=0.6cm ]
		\item $\orb(q)$ has simple spectrum and is homoclinically related to $\orb(p)$;
		
		\item  $\orb(q)$ is $(\gamma,\frac{1}{2})$-good approximation of $\orb(p)$;
		
		\item $|\lambda_i(q)-\lambda_i(p)|<\frac{\eta_0}{4}$ for every $i=1,\dots,m$.
	\end{itemize}  
	Take points 
	$$
	z^{\mathrm{s}}\in W^{\mathrm{s}}(\orb(q))\pitchfork W^{\mathrm{u}}(\orb(p)) \quad 
	\mbox{and} \quad z^{\mathrm{u}}\in W^{\mathrm{u}}(\orb(q))\pitchfork W^{\mathrm{s}}(\orb(p))
	$$ 
	and  a neighborhood $V$ of $\orb(q)$ such that 
	\begin{equation}
	\label{e.neighV}
	(\orb(p)\cup \{z^{\mathrm{s}},z^{\mathrm{u}}\})\cap \overline{V}=\emptyset.
	\end{equation}
	Since $\orb(q)$ has simple spectrum, there exists
	$Df$-invariant splitting with one-dimensional bundles
	$T_{\orb(q)}M=\widetilde E^1\oplus \cdots\oplus \widetilde E^m$. 
 As in the case of the orbit of $p$, for each $f^j(q)$ we can consider a basis
$\widetilde \beta_{f^j(q)}$ formed by unitary vectors contained in the bundles $\widetilde E^i_{f^j(q)}$. We chose
these basis ``compatible''  with $\beta_{f^j(p)}$ as explained below.

	Let $X \subset \orb(q)$ and $\rho \colon X \to \orb(p)$ be the set and the map given in 
	Definition~\ref{Def:good-approximation} of a $(\gamma,\kappa)$-good approximation.
For $\gamma$ sufficiently small, for each $x \in X$ the splittings
\[
\widetilde E^1_x \oplus \cdots \oplus \widetilde E^m_x
\qquad \text{and} \qquad
E^1_{\rho(x)} \oplus \cdots \oplus E^m_{\rho(x)}
\]
are $\gamma_0$-$C^0$-close.
Thus for points $x\in X$,
this allows us to choose the vectors of the basis $\widetilde \beta_x$ of $T_xM$ 
close to vectors of the basis $\beta_{\rho(x)}$.
Similarly for 
$\widetilde \beta_{f(x)}$ and
$\beta_{f(\rho(x))}$.

	Consider the linear map $Df\colon T_{f^j (q)}M\rightarrow T_{f^{j+1}(q)}M$ 
	and its matrix $B_j$ with respect to the basis $\widetilde \beta_{f^j (q)}$ and $\widetilde \beta_{f^{j+1} (q)}$.
	One has 
	\[
	\lambda_i(q)=\frac{1}{\pi(q)}\log\left\|Df^{\pi(q)}|_{E^i_{q}}\right\|=\frac{1}{\pi(q)}\sum_{j=0}^{\pi(q)-1} \log 
	| B_j|_{E^i_{ f^j (q)}}|,
	\] 
	where $B_j|_{E^i_{ f^j (q)}}$ is the entry $(j,j)$ of the matrix $B_j$.

In what follows, $\exp(\diag\left(t_1, \dots, t_m\right))$ denotes the $m\times m$ diagonal 
	matrix whose diagonal entries are $\text{exp} t_1, \dots, \text{exp}t_m$. 	

\begin{remark}\label{r.compatibility}
Given any $\varepsilon>0$, there is $0<\eta<\eta_0$ satisfying such that for  
every $j=0, \dots, \pi(p)-1$  and every $\zeta\in [0, \eta]$ the following holds

\[
\max\{ \| B_{j} (\zeta) - B_j \|, \| B_{j}(\zeta)^{-1} - B_j^{-1}\|\}
< \varepsilon, \,\, \mbox{where $B_{j}(\zeta)\eqdef
 \exp (\diag(\pm\zeta,\dots,\pm\zeta))\circ B_j$}.
\]
	Moreover, for every $\zeta_{\pi(q)-1}, \dots, \zeta_{0} \in [0, \eta]$ the product
	$B_{\pi(q)-1} (\zeta_{\pi(q)-1}) \cdots B_{0} (\zeta_0)$ is a hyperbolic matrix.
	\end{remark}

We now define the a one-parameter family of matrices
$\{B_{j,t}\}_{0\le j\le \pi(q)-1,\ t\in[0,1]}$.
Let $\mathbb{1}_I(\cdot)$ denote the characteristic function of $I$
(that is, $\mathbb{1}_I(i)=1$ if $i\in I$ and $\mathbb{1}_I(i)=0$ if $i\notin I$). Then
  \begin{itemize}[leftmargin=0.6cm ]
		\item if $f^j(q)\in X$, then 
		\begin{equation}\label{e.jinI}
		B_{j,t}\eqdef \exp (\diag\left((2\cdot\mathbb{1}_I(1)-1)  t\eta,\dots, (2\cdot\mathbb{1}_I(m)-1) t\eta)\right)\circ B_j,
		\end{equation}
		\item if $f^j(q)\in \orb(q)\setminus X$, then $B_{j,t}\eqdef B_j$.
	\end{itemize}

	By the choice of $\eta$ done in Remark~\ref{r.compatibility} and the definition of $B_{j,t}$,
	we have that $\|B_{j,t}-B_j\|<\varepsilon$ and $\|B_{j,t}^{-1}-B_j^{-1}\|<\varepsilon$ and
	the compositions  $B_{\pi(q)-1,t}\circ\cdots \circ B_{0,t}$
	are hyperbolic for every $t\in [0,1]$. Thus, we 
can apply Lemma~\ref{Lem:Gourmelon-Franks}, to obtain $g$ which is $\varepsilon$-$C^1$-close to $f$ such that $g$ coincides with $f$ on $\orb(q)$ and outside its
	neighborhood 
	$V$ satisfying \eqref{e.neighV},
	and $Dg(g^j(q))=B_{j,1}$ for every $j=0,\dots,\pi(q)-1$. Thus $\orb(q,g)$ and $\orb(p,g)$ are still homoclinically related.  Letting $q_g^I=q$, this ends the proof of Items 1 and 2.
	
	It remains to verify that every $\lambda_i(q,g)$ satisfies Item 3 in the lemma.
	Note that every $B_{j,1}$ is diagonal. There are two cases,  when $i\in I$, from 
	the definition of $B_{j,1}$ in
	\eqref{e.jinI}, $|\lambda_i(q)-\lambda_i(p)|<\frac{\eta_0}{4}$, and $\frac{\#X}{\pi(q)}\ge \frac{1}{2}$ 
	it follows
	\[
	\lambda_i(q,g)=\frac{1}{\pi(q)}\sum_{j=0}^{\pi(q)-1} \log \|B_{j,1}|_{E^i}\|\geq \lambda_{i}(q,f)+\frac{\eta_0}{2}
	>\lambda_{i}(p,f).
	\] 
	When  $i\in \{1,\dots,m\}\setminus I$, again from  \eqref{e.jinI}, 
	$|\lambda_i(q)-\lambda_i(p)|<\frac{\eta_0}{4}$, and $\frac{\#X}{\pi(q)}\ge \frac{1}{2}$ it follows
	\[
	\lambda_i(q,g)=\frac{1}{\pi(q)}\sum_{j=0}^{\pi(q)-1} \log \|B_{j,1}|_{E^i}\|<\lambda_{i}(q,f)-\frac{\eta_0}{2}<\lambda_{i}(p,f).
	\]
	The proof of the lemma is now complete.
\end{proof}

\subsection{End of the proof of Theorem~\ref{Thm:periodic-interior-spectrum}}
\label{ss.endofThm:periodic-interior-spectrum}
	 To define the residual set in the theorem,  we need some preliminaries.	 
		Take a countable basis $\mathfrak{O}=\left\{O_n\right\}_{n\in\mathbb{N}}$ of $M$ and let 
	$\mathfrak{U}=\left\{U_n\right\}_{n\in\mathbb{N}}$ be the (countable) collection of the finite unions of sets 
	in $\mathfrak{O}$.
	Let $\mathcal{P}(m)$ the parts of the set $\{1,\dots,m\}$.
	 For each $n\in\mathbb{N}$ and $I\in \mathcal{P}(m)$, define the following subsets $\mathcal{H}_{n,I}$ and $\mathcal{N}_{n,I}$ of $\diff^1(M)$:
  \begin{itemize}[leftmargin=0.6cm ]
		\item $\mathcal{H}_{n,I}$ consists of the diffeomorphis $h$ having a pair of saddles
				$q_1,q_2$ whose orbits are pairwise disjoint and such that
				\begin{itemize}
			\item[--]   their orbits  have simple spectrum and are homoclinically related;
			
			\item[--] 	$\orb(q_1,h)\subset U_n$;
			
			\item[--] $\lambda_i(q_1,h)>\lambda_i(q_2,h)$ if $i\in I$, and $\lambda_i(q_1,h)<\lambda_i(q_2,h)$ if 
			$i\not \in I$.
		\end{itemize}
		
		\item $\mathcal{N}_{n,I}=\diff^1(M)\setminus \overline{\mathcal{H}_{n,I}}$.
	\end{itemize}
	Since the properties in the definition of $\mathcal{H}_{n,I}$ are all $C^1$-open, thus $\mathcal{H}_{n,I}$ is an open subset of $\diff^1(M)$. Thus, 
	$\mathcal{H}_{n,I}\cup \mathcal{N}_{n,I}$ is an open and dense subset of $\diff^1(M)$.
	Let $\mathcal{R}$ be the residual subset of $\diff^1(M)$  in \eqref{e.thesetR}. 
	By construction, 
	\begin{equation}
	\label{e.theresidualset}
	\mathcal{R}'\eqdef \mathcal{R}\cap \bigcap_{I\in \mathcal{P}(m)}\,\,\, \bigcap_{n\in\mathbb{N}} (\mathcal{H}_{n,I}\cup \mathcal{N}_{n,I})
	\end{equation}
	 is a residual subset  of $\diff^1(M)$. We will prove that $\mathcal{R}'$ satisfies the conclusions in Theorem~\ref{Thm:periodic-interior-spectrum}.
	
	Consider $f\in \mathcal{R}'$, a nontrivial homoclinic class $H$ of $f$, and 
	a saddle
	$p\in H$ with simple spectrum 
$L(p)=\left(\lambda_1,\dots,\lambda_m\right)$.
Let $U_n\in\mathfrak{U}$ be an isolating neighborhood of $p$, 
recall \eqref{e.isolatingblock}.

\begin{remark}\label{r.thesetU0}
By the hyperbolicity of $\orb (p)$, there is a $C^1$-neighborhood 
of $f$ in $\diff^1(M)$ such that, for every $h\in\mathcal{U}_0$, the continuation $p_h$ of $p$ is
well defined, has simple spectrum, and $U_n$ is an isolating
neighborhood of $\orb(p_h,h)$.
\end{remark}

	\begin{claim*}
		 For every subset $I\in \mathcal{P}(m)$, it holds $f\in\mathcal{H}_{n,I}$.
    \end{claim*}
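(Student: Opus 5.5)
The claim follows from a standard genericity dichotomy. We argue by contradiction. Fix $I\in\mathcal{P}(m)$. Since $f\in\mathcal{R}'\subset \mathcal{H}_{n,I}\cup\mathcal{N}_{n,I}$, it suffices to show $f\notin\mathcal{N}_{n,I}$, that is, $f\in\overline{\mathcal{H}_{n,I}}$. Then, since $\mathcal{H}_{n,I}$ is open and $f\in\mathcal{H}_{n,I}\cup\mathcal{N}_{n,I}$ with $\mathcal{N}_{n,I}$ disjoint from $\overline{\mathcal{H}_{n,I}}$, we get $f\in\mathcal{H}_{n,I}$. So the task reduces to showing that every $C^1$-neighborhood $\mathcal{U}\subset\mathcal{U}_0$ of $f$ (with $\mathcal{U}_0$ as in Remark~\ref{r.thesetU0}) meets $\mathcal{H}_{n,I}$.

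To produce such an element, apply Lemma~\ref{Lem:perturbation-spectrum} to $f\in\mathcal{R}$, the saddle $p$ (simple spectrum, nontrivial class $H$), the set $I$ and the neighborhood $\mathcal{U}$. This gives $g\in\mathcal{U}$ with $g|_{\orb(p,f)}=f|_{\orb(p,f)}$, so $p_g=p$ and $L(p,g)=L(p,f)$. It also gives a saddle $q^I_g$ homoclinically related to $\orb(p,g)$ with $\lambda_i(q^I_g,g)>\lambda_i(p,g)$ for $i\in I$ and the reverse strict inequality for $i\notin I$. Set $q_1=q^I_g$ and $q_2=p$. The strict inequalities on all $m$ exponents force $\lambda_i(q_1)\neq\lambda_i(q_2)$, so the two orbits are distinct periodic orbits and hence disjoint. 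The orbit $\orb(p,g)$ has simple spectrum by Remark~\ref{r.thesetU0}. The orbit $\orb(q_1,g)$ also has simple spectrum, because in the proof of Lemma~\ref{Lem:perturbation-spectrum} the derivative along $\orb(q)$ is diagonal in the basis adapted to the one-dimensional bundles, and the perturbed exponents stay in the disjoint intervals around the $\lambda_i(p)$ fixed by the choice of $\eta_0$.

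The remaining condition is that $\orb(q_1,g)\subset U_n$, and this is the step needing care. The saddle $q$ in the proof of Lemma~\ref{Lem:perturbation-spectrum} is chosen via Item~\ref{generic:new-periodic-orbit} of Theorem~\ref{Thm:generic-properties} as a good approximation of $\orb(p)$, and it is homoclinically related to $p$. In principle it could wander outside $U_n$. I would handle this by choosing $U_n$ appropriately rather than trying to confine $q$ to a small neighborhood of $\orb(p)$. Since $\mathfrak{U}$ consists of all finite unions of basic open sets and orbits are compact, we may take $U_n$ to be a neighborhood containing both $\orb(p)$ and the relevant orbit of $q$ (which is $\gamma$-close to $\orb(p)$ on a large fraction of its points). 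Reading the construction that way, the perturbation supported near $\orb(q)$ keeps $\orb(q_1,g)=\orb(q,f)\subset U_n$, since $g$ coincides with $f$ on $\orb(q)$.

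Finally, all conditions defining $\mathcal{H}_{n,I}$ are open, so $g\in\mathcal{H}_{n,I}\cap\mathcal{U}$. Hence $f\in\overline{\mathcal{H}_{n,I}}$, and therefore $f\in\mathcal{H}_{n,I}$. The main obstacle is the bookkeeping in the previous step, namely making the choice of $U_n$ compatible with $\orb(q)$ independently of the perturbation. I would resolve it by fixing $q$ (from the proof of the lemma applied to $f$) before choosing $n$.
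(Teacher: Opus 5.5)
Your reduction to $f\in\overline{\mathcal{H}_{n,I}}$ via the dichotomy $f\in\mathcal{H}_{n,I}\cup\mathcal{N}_{n,I}$ matches the paper, and so do the use of Lemma~\ref{Lem:perturbation-spectrum} inside $\mathcal{U}\cap\mathcal{U}_0$ and the openness argument. The gap is the step you flagged yourself, and your way of resolving it does not work.

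The index $n$ is fixed before the claim: $U_n$ is an isolating neighborhood of $\orb(p)$, and by Remark~\ref{r.thesetU0} it remains one for $\orb(p_h,h)$ whenever $h\in\mathcal{U}_0$. Any compact $h$-invariant set contained in $U_n$ therefore lies in $\bigcap_i h^i(U_n)=\orb(p_h,h)$. With your assignment $q_1=q^I_g$, the requirement $\orb(q_1,g)\subset U_n$ is impossible, because $q^I_g\notin\orb(p)$.

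Re-choosing $U_n$ so that it contains $\orb(q)$ proves a different statement, and a useless one. The whole point of taking $U_n$ isolating shows up in the final step of the proof of Theorem~\ref{Thm:periodic-interior-spectrum}. There, from $f\in\mathcal{H}_{n,I}$ one deduces that the saddle $q_1$ of $f$ with orbit in $U_n$ is $p$ itself, so the sign pattern is relative to $L(p)$. If $U_n$ is not isolating, the pair $(q_1,q_2)$ given by $f\in\mathcal{H}_{n,I}$ has no relation to $p$, and $L(p)\in\interior(\mathcal{L}(H))$ no longer follows. In addition, fixing $q$ independently of $\mathcal{U}$ relies on the internals of the proof of the lemma, not on its statement.

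The missing idea is to swap the roles of the two saddles. Take $q_1=p_g$, which equals $p$ because $g$ preserves $\orb(p,f)$. Its orbit lies in $U_n$ automatically and has simple spectrum. For $q_2$, take the saddle $q^{I^c}_g$ given by Lemma~\ref{Lem:perturbation-spectrum} applied to the complement $I^c=\{1,\dots,m\}\setminus I$. Then $\lambda_i(q_1,g)>\lambda_i(q_2,g)$ for $i\in I$ and $\lambda_i(q_1,g)<\lambda_i(q_2,g)$ for $i\notin I$. The two orbits are disjoint and homoclinically related, and all the defining conditions are open, so $g\in\mathcal{H}_{n,I}$. The rest of your argument then goes through unchanged.

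The paper applies the lemma with $I$ itself and is loose about which saddle plays the role of $q_1$. Since $I$ ranges over all of $\mathcal{P}(m)$, passing to the complement is harmless. What matters is that $p$ is the saddle placed in $U_n$.
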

    
\begin{proof}
Let $\mathcal{U}_0$ as in Remark~\ref{r.thesetU0}.
    	Take any neighborhood $\mathcal{U}$ of $f$. By Lemma~\ref{Lem:perturbation-spectrum}, there exists $g\in\mathcal{U}\cap\mathcal{U}_0$ that preserves the hyperbolic periodic orbit  $\orb(p,f)$ and has a hyperbolic periodic point $q_g^I$ with simple spectrum such that 
	\begin{equation}
	\label{e.greaterandsmaller}
	\lambda_i(q_g^I,g)>\lambda_i(p,g)
	\quad \text{ if  }i\in I, \qquad
	\lambda_i(q_g^I,g)<\lambda_i(p,g) \quad \text{ if } i\not\in I.
	\end{equation}
Thus	 there is a neighborhood $\mathcal{V}\subset\mathcal{U}_0$ of $g$ such that every $h\in\mathcal{V}$ 
has a hyperbolic periodic point $q_h^I$ homoclinically related to $p_h$ 
satisfying \eqref{e.greaterandsmaller}.
This implies that $g\in \mathcal{H}_{n,I}$. As $g \in \mathcal{U}$ and this neighborhood of $f$ can be taken arbitrarily small, it follows that  $f\in\overline{\mathcal{H}_{n,I}}$. 

Finally, note that $f\in \mathcal{R}'$ implies that $f\in \mathcal{H}_{n,I}\cup \mathcal{N}_{n,I}$. As $\mathcal{N}_{n,I}=\diff^1(M)\setminus \overline{\mathcal{H}_{n,I}}$, one concludes that $f\in\mathcal{H}_{n,I}$, proving the claim.
    \end{proof}
	
Consider any $I\in \mathcal{P}(m)$. Since $f\in\mathcal{H}_{n,I}$, there is a hyperbolic periodic point $q^I_f$ of $f$ with simple spectrum which is homoclinically related to $p$ satisfying \eqref{e.greaterandsmaller}. As a consequence,  we get a family of 
periodic orbits $\{\orb(q^I_f)\}_{I\in \mathcal{P}(m))}$ contained in $H$ such that the convex hull of $\{L(q^I_f)\colon I\in \mathcal{P}(m)\}$ contains $L(p)$ in its interior. By Item~\ref{generic:compact-convex} of Theorem~\ref{Thm:generic-properties}, the convex hull of $\{L(q^I_f)\colon I\in \mathcal{P}(m)\}$  is contained in $\mathcal{L}(H)$. Thus $L(p)\in\interior (\mathcal{L}(H))$, 	 ending the proof of  the theorem. \qed

\section{Lyapunov graph: Proof of Theorem~\ref{Thm:application-Bochi-Bonatti}}

	The proof of  Theorem~\ref{Thm:application-Bochi-Bonatti} follows by combining~\cite[Theorem 2]{BocBon:12} and~\cite[Theorem 3.5(a)]{AbdBonCro:11} with a standard Baire category argument, as the one in the proof of Theorem~\ref{Thm:periodic-interior-spectrum}. We provide only a sketch.	
	
	Let $\mathcal{R}$ be the residual subset of $\diff^1(M)$ in \eqref{e.thesetR}.
	Item~\ref{generic:lyapunovvector} in  Theorem~\ref{Thm:generic-properties}  implies that
 for every $f\in \mathcal{R}$, every nontrivial isolated homoclinic class 
	$H$, and every $\mu \in \mathcal{M}_{\mathrm{inv}} (H)$
there is a sequence of periodic orbits $\{\orb(q_n)\}$ whose periodic measures 
$\mu_{q_n}$ converges to $\mu$,  $\orb(q_n)$ converges to $\supp(\mu)$, and $L(q_n)$ converges to $L(\mu)$. 
Thus $L(\mu)\in\mathcal{L}(H)$.  

Bearing in mind that by Item  \ref{generic:homorelated}
in Theorem \ref{Thm:generic-properties} the class $H$ has a continuation for nearby 
diffeomorphisms $g$ we will denote them by $H_g$.

By~\cite[Theorem 2]{BocBon:12}
%\footnote{In~\cite{BocBon:12}, the authors used the notion of \emph{Lyapunov graph}.}, 
	for any $\varepsilon>0$, there are  diffeomorphisms $g$ $C^1$-$\varepsilon$-close to $f$
	which  preserve $\orb(q_n)$ for large $n$ and such that $L(\mu_{q_n},g)$
	is simple and is $\varepsilon$-close to 
	 the average of Lyapunov spectrum $\widehat L(\mu)$ of $\mu$, see
 \eqref{e.Lhatmu}.
	
	Using now a  Baire argument similar to the one in  
	the proof of Theorem~\ref{Thm:periodic-interior-spectrum}, 
	we get a residual set $\mathcal{R}''$ (which is the intersection of $\mathcal{R}$ with another residual set) 
	consisting of
	diffeomorphisms $g$
	having hyperbolic periodic orbits $\orb(p_g,g)$ in $H_g$ (since 
	$H_f$ is isolated and hence $H_g$ is also isolated) such that $L(\mu_{p_g},g)$ is simple and is $\varepsilon$-close to $\widehat L(\mu)$. By Item~\ref{generic:compact-convex} of Theorem~\ref{Thm:generic-properties}, one has that
	$$
	\left\{L\colon L=t\cdot L(\mu)+(1-t)\cdot L(\mu_{p_g},g), t\in[0,1]\right\} \subset
	\mathcal{L}(H_g).
	$$ 
	Letting $\varepsilon\to 0$ and since the set  $\mathcal{L}(H)$
	is defined as a closure, it follows
	\[
	\left\{L\colon L=t\cdot L(\mu)+(1-t)\cdot \widehat L, t\in[0,1]\right\}\subset\mathcal{L}(H_g),
	\]
	proving the theorem.

\newcommand{\etalchar}[1]{$^{#1}$}
\bibliographystyle{alpha}

\end{document}